\renewcommand{\mathbb}{\mathds}
\title{Adversarially Robust Topological Inference}
\author[1]{\textrm{Siddharth Vishwanath}\thanks{{svishwanath@ucsd.edu}}}
\author[1]{\textrm{Bharath K. Sriperumbudur}}
\author[3]{\textrm{Kenji Fukumizu}}
\author[3]{\textrm{Satoshi Kuriki}}
\affil[1]{\small \textrm{Department of Mathematics, University of California San Diego}}
\affil[1]{\small \textrm{Department of Statistics, The Pennsylvania State University}}
\affil[3]{\small \textrm{The Institute of Statistical Mathematics}}
\date{}
\pgfplotsset{compat=1.17}
\begin{document}
\maketitle

\vspace*{-2em}

%%%%%%%%%%%%%%%%%%%%%%%%%%%%%%%%%%%%%%%%
%%%% Start: Main
\begingroup
\let\clearpage\relax

%%%%%%%%%%%%%%%%%%%%%%%%%%%%%%%%%%%%%%%%
% Abstract

\begin{abstract}
    \begingroup
    \centering
    % \textbf{Abstract}\\[1em]
    The distance function to a compact set plays a crucial role in the paradigm of topological data analysis. In particular, the sublevel sets of the distance function are used in the computation of persistent homology---a backbone of the topological data analysis pipeline. Despite its stability to perturbations in the Hausdorff distance, persistent homology is highly sensitive to outliers. In this work, we develop a framework of statistical inference for persistent homology in the presence of outliers. Drawing inspiration from recent developments in robust statistics, we propose a \textit{median-of-means} variant of the distance function (\md{}) and establish its statistical properties. In particular, we show that, even in the presence of outliers, the sublevel filtrations and weighted filtrations induced by \md{} are both consistent estimators of the true underlying population counterpart and exhibit near minimax-optimal performance in adversarial settings. Finally, we demonstrate the advantages of the proposed methodology through simulations and applications. 
    \endgroup
\end{abstract}

% Abstract
%%%%%%%%%%%%%%%%%%%%%%%%%%%%%%%%%%%%%%%%

%%%%%%%%%%%%%%%%%%%%%%%%%%%%%%%%%%%%%%%%
% Introduction

\section{Introduction}
\label{sec:intro}

Given a compact set $\bX \subset \R^d$, its persistence diagram encodes the subtle geometric and topological features that underlie $\bX$ as a multiscale summary and forms the cornerstone of topological data analysis. Persistent homology serves as the backbone for computing persistence diagrams and encodes the homological features underlying $\bX$ at different resolutions. The computation of persistent homology is typically achieved by constructing a \textit{filtration} $V_{\bX}$, i.e., a nested sequence of topological spaces, which captures the evolution of geometric and topological features as the resolution varies. The persistent homology, which is encoded in its persistence module, $\bbv_{\bX}$, extracts the homological information from the filtration $V_{\bX}$. This is then summarized in a persistence diagram $\dgm\pa{\bbv_{\bX}}$.

Broadly speaking, there are two different methods for obtaining filtrations. The first, and, arguably more classical method is obtained by examining the union of balls of radius $r$ centered on the points of $\bX$ called the $r$--\textit{offset} of $\bX$, denoted ${\bX}(r)$, for each resolution $r > 0$. The resulting filtration $V[\bX] = \pb{\Xb(r): r > 0}$, depends only on the metric properties of $\bX$. The second, and more general approach is based on constructing a \textit{filter function} $f_{\Xb}$, which reflects the topological features underlying $\bX$. The resulting filtration $V[f_{\bX}]$, in this case, is obtained by probing the sublevel sets $f\inv_{\Xb}\left( (-\infty, r] \right)$ or the superlevel sets $f\inv_{\Xb}\qty( [r, \infty))$ associated with $f_\Xb$. While these two methods are vastly different, in principle, they both attempt to explore the topological features underlying $\bX$.

In this context, the distance function $\dx$ to the set $\Xb$ plays a special role in topological data analysis, and satisfies the property that $V[\bX] = V[\dx]$. That is, the sublevel sets of the distance function encode the same topological information as the filtration from its offsets. The appeal of using the distance function in the computation of persistence diagrams comes from the celebrated stability of persistence diagrams \citep{chazal2016structure}. In a nutshell, the stability result for persistence diagrams guarantees that (i) the persistence diagrams resulting from two compact sets $\Xb$ and $\Yb$ are close whenever the sets themselves are close in the Hausdorff distance, and, (ii)
the functional persistence diagrams resulting from two filter functions $f$ and $g$ are close whenever $f$ and $g$ are close w.r.t. the $\norminf{\cdot}$ metric.

In the statistical setting, one has access to $\bX$ only through samples $\Xn\! =\!\pb{\Xv_1, \dots, \Xv_n}$ obtained using a probability distribution $\pr$ which is supported on the (unknown) set $\bX$. The objective, in a statistical inference framework, is to use the samples $\Xn$ to infer the true population persistence diagram $\dgm\pa{\bbv_\Xb}$. The offset $\Xn(r)$ and filter function $f_n$, constructed using the sample points, are themselves random quantities associated with their population counterparts $\Xb(r)$ and $f_\Xb$, respectively, and these may be used to construct a sample estimator $\dgm\pa{\bbv_{\Xn}}$. To this end, several existing works have studied the statistical properties of persistence diagrams from samples in a noiseless setting, e.g.,~constructing confidence bands and characterizing the convergence rate of $\dgm\pa{\bbv_{\Xn}}$ to $\dgm\pa{\bbv_{\Xb}}$ in the space of persistence diagrams \citep{fasy2014confidence,chazal2015subsampling,chazal2015convergence,chazal2017robust}. 

However, in practical settings, real-world data is likely subject to measurement errors and the presence of outliers. While some assumptions may be imposed on the noise and the outliers, in the most baneful settings, the given data may be subject to adversarial contamination. In this setting, for $m<n/2$, we assume that the samples $\Xn$, which we have access to, contain only ${n-m}$ points obtained from the probability distribution $\pr$ with $\supp\pa{\pr} = \bX$, and make no further assumptions on the remaining $m$ points. In principle, the $m$ outliers may be carefully chosen by an adversary after examining the remaining $n-m$ points. While the stability of persistence diagrams guarantees that small perturbations in the sample points induce only small changes in the resulting persistence diagrams, even a few outliers in the samples can lead to deleterious effects. This issue is further exacerbated in the adversarial setting, where the adversary is free to place the $m$ points where it may drastically impact the resulting topological inference.  The overarching objective of this paper is to construct an estimator of the (unknown) population quantity $\dgm\pa{\bbv[\bX]}$ using the corrupted sample points $\Xn$ which is, both, statistically consistent and computationally efficient.
% The overarching motto of robust statistics is to develop estimators which are resilient to model misspecification and the presence of such outliers.

\subsection{Contributions}
To this end, our contributions are as follows. First, we introduce \md{}, denoted by $\dnq$, as an outlier-robust variant of the empirical distance function which is constructed using the median-of-means principle, and we establish its theoretical properties. Notably the \md{} relies on a tuning parameter $Q$ which is easy to interpret, and, roughly speaking, reflects an estimate for the number of outliers in the observed sample. While the persistence diagram resulting from the sublevel filtration of $\dnq$ is a valid candidate for statistical inference, it can be expensive to compute in practice. To overcome this, we use the weighted filtrations introduced by \cite{buchet2016efficient} and \cite{anai2019dtm} to construct $\dnq$-weighted filtrations, $V[\Xn, \dnq]$, as computationally efficient estimators of $\dgm\pa{V[\bX]}$. Our main contributions are the following:

\begin{enumerate}[label=\textup{(\Roman*)}]
    \item We show that sublevel set persistence diagrams of $\dnq$ are consistent estimators of the sublevel set persistence diagram of the true population counterpart $\dx$ even in the presence of outliers; we establish its convergence rate (Theorem~\ref{theorem:momdist-sublevel}) and show that it is near minimax optimal up to a $\log{n}$ factor (Theorem~\ref{thm:minimax}).
    \item We establish a stability result for the the $\dnq$-weighted filtrations, $V[\Xn, \dnq]$, and we show that they are stable w.r.t.~adversarial contamination (Theorem~\ref{theorem:momdist-stability}).
    \item Furthermore, we show that the persistence diagram $\dgm\pa{\bbv[\Xn, \dnq]}$ is both a computationally efficient and statistically near-optimal estimator of $\dgm\pa{\bbv[\bX]}$ (Theorem~\ref{theorem:momdist-consistency}).
    \item Next, in a sensitivity analysis framework, we quantify the gain in robustness achieved when using the $\dnq$-weighted filtrations vis-\`{a}-vis its non-robust $\dsf_n$-weighted counterpart; (Theorem~\ref{theorem:momdist-influence}). 
    \item Lastly, we propose a data-driven procedure for adaptively selecting the tuning parameter $Q$ using Lepski's method. For the data-driven choice $\widehat Q$, we show that the resulting estimator $\dgm\qty\big{\bbv[\Xn, \dsf_{n, \widehat Q}]}$ is also minimax optimal up to a $\log{n}$ factor (Theorem~\ref{theorem:lepski}).
    \end{enumerate}

\subsection{Related Work}

Several approaches have been proposed in existing literature to overcome the sensitivity of persistence diagrams to noise. The prevailing ideas in these approaches rely on constructing a filter function, $f_\pr$, which reflects both the topological information and the distribution of mass underlying the support $\supp\pa{\pr} = \bX$. Replacing the population probability measure $\pr$ with the empirical measure $\pr_n$ associated with the samples $\Xn$ results in an empirical estimator $f_{\pr_n}$. Some notable examples include the distance-to-measure \citep{chazal2011geometric}, the kernel distance \citep{phillips2015geometric}, and kernel density estimators \citep{fasy2014confidence,vishwanath2020robust}.

While these approaches mitigate, to some extent, the influence of noise on the resulting persistence diagrams, they are not without their drawbacks. For starters, while it may be argued that $\dgm\pa{\bbv[f_{\pr_n}]}$ is more resilient to noise, ultimately, this sample estimator corresponds to the population quantity $\dgm\pa{\bbv[f_{\pr}]}$, which may, nevertheless, omit some subtle geometric and topological features present in $\dgm\pa{\bbv[\bX]}$. Furthermore, from a statistical perspective, if $\Xn$ comprises only $n-m$ points from $\pr$ and the remaining $m$ points constitute outliers, then the sample estimator $V[f_{\pr_n}]$, obtained using $\Xn$, will no longer be a valid estimator of the population quantity $\dgm\pa{\bbv[f_{\pr}]}$ which we wish to infer. 

Lastly, the exact computation of these estimators can be prohibitively expensive, if not impossible in practice. For instance, the exact computation of the distance-to-measure requires computing an order-$k$ Voronoi diagram. Moreover, in the general setting, the sublevel/superlevel filtrations arising from these approaches are computed using cubical homology, which relies on a (nuisance) grid resolution \mbox{parameter}. If this resolution is too coarse, then some subtle topological features are affected. On the flipside, if the resolution is too fine, then the accuracy is still impacted, as noted in \cite{fasy2014confidence}. In the high-dimensional setting, cubical homology also falls victim to the curse of dimensionality, i.e., for a fixed grid resolution, the number of simplices in the resulting cubical complex grows exponentially with the dimension of the ambient space.

In order to overcome these computational drawbacks, \cite{buchet2016efficient} and \cite{anai2019dtm} propose weighted filtrations, $V[\Xn, f_{\Xn}]$, using power distances. While the weighted filtrations circumvent the need for constructing grid-based approximations, they come at the expense of exact inference, i.e., the weighted filtrations $V[\Xn, f_{\Xn}]$ only approximate $V[f_{\Xn}]$ and do not provide valid statistical inference, even in the absence of outliers.

{
In order to mitigate the effect of noise, \cite{buchet2015topological} propose a method for estimating $V[f_{\bX}]$ under a functional noise condition on $f_{\Xn}$---which encompasses several commonly encountered noise scenarios, e.g., Wasserstein noise and additive Gaussian noise. In a more general setting, \cite{buchet2018declutter} propose a decluttering algorithm that uses the distance-to-measure to select a subset of the sample points $\Xn$ which are representative of the underlying ground truth $\bX$ even when the sample contains outliers. Furthermore, the authors propose a parameter-free variant of the decluttering algorithm which can provably recover the ground truth under some weak-uniformity assumptions. A comprehensive study of these methods is outlined in the PhD thesis of \cite{buchet2014topological}. Our proposed methodology can be seen as an analogue of the parameter-free decluttering algorithm, which comes with certifiable statistical guarantees. In a similar vein, \cite{brecheteau2018k} propose a $K$-points approximation of the distance-to-measure, which produces a coreset of $K$ points that can closely approximate the distance-to-measure of the original sample, and noticeably reduces the computational time when $K \ll n$. The authors also propose a trimmed heuristic that is resilient to noise. This is further extended by \cite{brecheteau2020robust} where the coreset of $K$-points approximates the sublevel sets of the distance-to-measure using unions of ellipsoids as opposed to isotropic balls.
}

On a similar note, \cite{vishwanath2020robust} proposed robust persistence diagrams which are resilient to outliers using kernel density estimators (KDE), and also proposed a framework for characterizing the sensitivity to outliers using an analogue of influence functions. Although \citet[Theorem 1]{vishwanath2020robust} describes the gain in robustness by considering the robust KDE $\fn$ using the persistence influence function, Theorems~2~\&~3 of the same work establish that as $n \rightarrow \infty$ and $\sigma \rightarrow 0$, the persistence diagram $\dgm\pa{\fn}$ recovers the same information which underlies the sample points $\Xn$. However, if the underlying distribution is contaminated, e.g., ${\pr = (1-\pi) \pr^* + \pi \qr}$, then the topological inference we hope to target is that of $\pr^*$ and not that of $\pr$.

Finally, with a similar objective of mitigating the impact of noise in topological inference, recent approaches have considered multi-parameter persistent homology as a robust tool for inferring the topological features underlying $\Xn$ \citep{carlsson2009theory}. While some recent results have demonstrated some promise (e.g., \citealp{vipond2021multiparameter}), they are, nevertheless, computationally infeasible for most applications, in addition to being hard to interpret \citep{otter2017roadmap,bjerkevik2020computing}. 

On the statistical front, founded on the seminal works of \cite{tukey1960survey} and \cite{huber1964robust}, robust statistics has witnessed renewed interest \citep{diakonikolas2017being}. In particular, the classical problem of mean and covariance estimation has been revisited in several works \citep{audibert2011robust,minsker2015geometric,devroye2016sub,joly2016robust} with the objective of easing model assumptions to, either, the regularity of the data generating mechanism, or, the presence of outliers. See \cite{lugosi2019mean} for a recent survey. In this regard, median-of-means (MoM) estimators---originally introduced by \cite{nemirovskij1983problem}---and the broader median-of-means principle \citep{lecue2020robust} have emerged as powerful tools for "robustifying" existing estimators in near-linear time. Although this comes slightly at the expense of statistical optimality, median-of-means estimators are, nevertheless, easier to compute than statistically optimal \textit{and} robust methods such as the tournament estimators introduced by \citep{lugosi2019risk}. \cite{audibert2011robust} showed that, in the univariate setting, the MoM estimator achieves sub-Gaussian rates of convergence for heavy tailed data. \cite{minsker2015geometric} and \cite{devroye2016sub} extend these results to the multivariate setting by considering the geometric median. The MoM idea has subsequently been extended in several other directions, e.g., U-statistics \cite{joly2016robust}, kernel mean embeddings \cite{lerasle2019monk} and general M-estimators \cite{lecue2020robust} among others. Most importantly, these extensions move away from the heavy-tailed framework and provide significant insights on how MoM estimators can overcome the second relaxation, i.e., estimation in the presence of outlying contamination.

\subsection{Organization} 
The remainder of this paper is organized as follows. In Section~\ref{sec:preliminaries} we present the necessary background on persistent homology. We first introduce the proposed methodology in Section~\ref{sec:proposal}, and then present the main results in the remainder of the section. We establish the statistical properties of the proposed estimator in Section~\ref{sec:statistical}, and we present the influence analysis in Section~\ref{sec:influence}. Numerical results supporting the theory are provided in Appendix~\ref{sec:experiments}. The proofs of all the results are collected in the Appendix~\ref{sec:proofs} of the Supplementary Material.

% Introduction
%%%%%%%%%%%%%%%%%%%%%%%%%%%%%%%%%%%%%%%%

%%%%%%%%%%%%%%%%%%%%%%%%%%%%%%%%%%%%%%%%
% Preliminaries

\section{Preliminaries}
\label{sec:preliminaries}

The following subsections introduce the essential ingredients used for the remaining of the paper. 

\subsection{Definitions and Notations} 
For $n \in \Z_+$, we use the notation $[n] = \pb{1, 2, \dots, n}$, and for real-valued functions~$f$~and~$g$ we employ the notation $f(n) \lesssim g(n)$  if $f(n) = O\qty\big(g(n))$. The closed ball of radius $r$ centered at $\xv \in \R^d$ is denoted $B(\xv, r)$. For a compact set $\bX \subset \R^d$, the $r$--offset of $\bX$ is given by 
$$
\bX(r) = \bigcup_{\xv \in \bX}B(\xv, r).
$$ 
The distance function w.r.t. the compact set $\Xb$ plays a central role in extracting the geometric and topological features underlying $\Xb$. 

\begin{definition}[Distance function]
    For a compact set $\bX \subseteq \R^d$, the distance function to the set $\bX$, denoted as $\dx$, is given by
    \eq{
        \dx(\yv) \defeq \inf_{\xv \in \bX}\norm{\xv - \yv}, \ \ \ \text{for all } \yv \in \R^d.\nn
    }
\end{definition}

For a finite collection of points $\Xn$, the distance function $d_{\Xn}$ is simply denoted as $\dsf_n$. For two compact sets $\bX, \bY \subset \R^d$ the \textit{Hausdorff distance} {between $\bX$ and $\bY$} is given by
$$
{\haus{\bX, \bY}[] \defeq \inf\qty\Big{\e > 0: \bX \subseteq \bY[](\e), \bY \subseteq \bX[](\e)}} = \norminf{\dsf_{\bX} - \dsf_{\bY}},
$$ 
and metrizes the space of all compact subsets of $(\R^d, \norm{\cdot})$. 
%{Equivalently, by noting that the distance function $\dx \in \F(\R^d) \defeq \qty(\mathcal{C}(\R^d), \norminf{\cdot})$, the space of continuous functions on $\R^d$ endowed with the supremum norm, it follows that ${\haus{\bX, \bY}[]} = \norminf{\dsf_{\bX} - \dsf_{\bY}} = \sup_{\zv \in \R^d}\abs{\dsf_{\bX}(\zv) - \dsf_{\bY}(\zv)}$.} 
While results here should extend to general metric spaces $(\M,\rho)$ with simple modifications along the lines of \cite{chazal2015convergence} and \cite{buchet2016efficient}, we assume that $(\M, \rho) = (\R^d, \norm{\cdot})$ throughout the paper.

$\mathcal{P}(\bX)$ denotes the set of Borel probability measures defined on $\R^d$ with support $\bX \subseteq \R^d$, and for $\xv \in \R^d$, $\dir{\xv}$ is used to denote a Dirac measure at $\xv$. A key assumption used throughout the paper is a regularity condition for the data-generating mechanism. For $a,b>0$, the probability measure satisfies the $(a,b)-$standard condition if
{
\eq{\label{eq:ab-standard}
    \pr\qty\Big( B(\xv, r) ) \ge 1 \wedge a r^b \quad \text{for all } r\ge0, \;\text{and for all } \xv \in \supp(\bX).
}}
We denote by $\mathcal{P}(\Xb, a, b)$ the subset of $\mathcal{P}(\Xb)$ which satisfies the $(a, b)-$standard condition in \eref{eq:ab-standard} for $a, b >0$. This regularity assumption is standard in the domain of geometric and topological inference (e.g., \citealp{cuevas2004boundary,chazal2015convergence,chazal2015subsampling,chazal2017robust}). For instance, if $b$ is an integer and $\bX$ is a $b$-dimensional submanifold with positive reach, then $\mathcal{P}(\Xb, a, b)$ consists of probability measures supported on $\Xb$ which admit a density with respect to the $b$-dimensional Hausdorff measure which is lower bounded by $O(a)$. Throughout the paper, we assume that the samples $\Xn$ are obtained in an adversarial contamination setting \ref{setting}, as defined below.

\begin{enumerate}[label=\textbf{Sampling Setting ($\scr{S}$).}, ref=\textup{($\scr{S}$)}, leftmargin=*, itemindent=11.5em, labelindent=2em]
    \item\label{setting} The data comprises of $n$ samples $\Xn = \{\Xv_1, \dots, \Xv_n\}$, where for a collection $\mathcal{O} \subset [n]$ with $\abs{\mathcal{O}} = m < {n}/{2}$, the samples $\Ym = \{\Xv_i: i \in \mathcal{O}\}$ are contaminated with arbitrary outliers. No distributional assumption is made on these outliers. The remaining $n-m$ samples, {$\Xnm=\{\Xv_i: i \in \mathcal{O}^c\}$,} are assumed to be observed i.i.d. from a distribution $\pr \in \mathcal{P}(\Xb, a, b)$, for compact $\Xb \subset \R^d$ and $a,b > 0$.
\end{enumerate}

A glossary of notations for additional definitions and notations introduced in the subsequent sections is provided in Table~\ref{tab:glossary} of the Supplementary Material.

\subsection{Background on Persistent Homology}
\label{sec:persistent}

In this section, we provide the necessary background on persistent homology. A detailed background is provided in Appendix~\ref{sec:persistent-homology-appendix}, and we refer the reader to \cite{chazal2017introduction,edelsbrunner2010computational} for a comprehensive introduction.

Given a compact set $\bX$, the building block of any topological data analysis pipeline to extract meaningful information from $\bX$ begins with a nested sequence of filtered topological spaces called a filtration, simply denoted by $V$. The sequence of spaces {is} parametrized by a resolution parameter $t$. There are several approaches for constructing filtrations using $\Xb$. One approach is to consider the collection of offsets built on top of $\Xb$, i.e., $V^t = V^t[\bX] = \Xb(t)$. For $s < t$, the offsets are nested $V^s \subseteq V^t$, and $V[\bX] \defeq \pb{ V^t[\Xb] : t \in \R }$ is a nested sequence of topological spaces and defines the filtration built using the offsets of~$\Xb$. 

The second approach to constructing a filtration is using a filter function $f_{\Xb}: \R^d \rightarrow \R$ which carries the topological information underlying $\Xb$. In this scenario, one typically constructs the filtration from the sublevel sets associated with $f_\Xb$, given by $V^t = f\inv_{\Xb}\qty\big({ (-\infty, t] })$ for each resolution $t$. Again, for $s<t$, {$V^s[f_\Xb] \subseteq V^t[f_\Xb]$} and the sequence $V[f_\Xb] = \pb{ V^t[f_{\Xb}] : t \in \R }$ constitutes the sublevel filtration from $f_{\Xb}$. Mutatis mutandis a similar notion holds for the superlevel filtration.

In general, the filtration $V[\Xb]$ can be very different from $V[f_{\Xb}]$, although the prevailing objective is for $V[f_{\Xb}]$ to encode the same information as in $V[\Xb]$. In this context, the distance function $\dx$ plays a special role owing to the fact that its sublevel filtration is the same filtration associated with the offsets, i.e., $V[\dx] = V[\bX]$. This fact plays an important role in motivating the \md{} estimator introduced in Section~\ref{sec:proposal}, and follows by noting that for every resolution $t > 0$,
$\dsf_{\Xb}\inv\qty\Big({ (-\infty, t] }) = \pb{\xv \in \R^d : \dx(\xv) \le t} = \bigcup_{\xv \in \Xb}B(\xv, t).$

For general $f$, one can hope to get the best of both worlds by constructing $f$-weighted offsets as follows. For a \textit{power parameter} $p \ge 1$, the $f$-weighted radius at $\xv$ is given by
\begin{equation}\label{eq:weighted-radius}
    \rfx \defeq \begin{cases}
        \pa{t^p - f(\xv)^p}^{1/p} & \text{ if } t \ge f(\xv) \\
        -\infty                   & \text{ if } t < f(\xv),
    \end{cases}
\end{equation}
and $\Bfx[f][][][] \defeq \qty{\yv \in \R^d: \norm{\xv - \yv} \le \rfx}$ is the \emph{weighted ball of resolution $t$ at $\xv$}. For a single resolution $t$, each $\Bfx[f][][][]$ is a Euclidean ball with a different radius determined by \cref{eq:weighted-radius}. The $f$-weighted offset at resolution $t$ is then given by,
$$
    \Vt[][\bX,f][] \defeq \bigcup_{\xv \in \bX} \Bfx[f][\xv][].\nonumber\vspace*{-0.6em}
%    \label{eq:union-of-weighted-balls}
$$
See Figure~\ref{fig:ball} for an illustration. On the one hand, if $f(\xv) = 0$ for all $\xv \in \Xb$, as is the case with $\dx$, then the $f$-weighted offsets reduce to the usual offsets $V^t[\Xb]$. On the other hand, if $\Xb=\R^d$, then the $f$-weighted offsets coincide with the sublevel sets $V^t[f]$.

Let $V = \pb{V^t : t \in \R}$ denote a generic filtration. The set $V^t$ encodes meaningful topological features at each resolution $t$, e.g., connected components, loops, holes, etc. Formally, this information is ``stored'' in the homology groups $\bbv^t = \text{H}_k(V^t)$, which are vector spaces. The basic idea of persistent homology is that as the resolution parameter $t$ varies, the topological features evolve, and this evolution can be captured by examining the linear maps between vector spaces in $\bbv^t$. Roughly speaking, the collection $\bbv = \qty{\bbv^t: t \in \R}$ constitutes a persistence module. See Figure~\ref{fig:interleaving}. Imagine observing $\bbv^t$ as $t$ varies continuously; one of three things can happen: (1) the topology of $\bbv^t$ is identical to that at $t^+$ and $t^-$, (2) at $t\equiv t_b$, $\bbv^t$ contains an additional topological feature not present at $t^-$, or, (3) at $t\equiv t_d$, $V^t$ contains a feature which disappears at $t^+$. In the last two cases, $t_b$ and $t_d$ are known as the birth/death times of a topological feature. A \textit{persistence diagram}, given by 
$$\dgm(\bbv) \defeq \qty{ (t_b, t_d) \in \R^2: \exists \text{ a feature born at $t_b$ and dies at $t_d$} },
$$
is obtained by pairing the birth/death times of all topological features in $\bbv$. Since $t_b < t_d$, persistence diagrams are multisets supported on the upper half-plane $\Omega = \qty{(x, y): x \le y}$.

\subsection{Interleaving and Bottleneck distance} Given persistence modules $\bbv = \{\bbv^t: t \in \R\}$ and $\bbw=\{\bbw^t: t \in \R\}$, there is a natural way to measure the similarity between them by considering two nondecrasing maps, $\alpha(t)$ and $\beta(t)$. In particular, if $\alpha$ and $\beta$ form well-defined criss-crossing maps between $\bbv$ and $\bbw$, as shown in Figure~\ref{fig:interleaving}, for all $s \le t$, then $\bbv$ and $\bbw$ are said to be $(\alpha, \beta)$-interleaved. %See \cref{sec:interleaving} for a formal definition.

\begin{figure}[t]
    \centering
    \includegraphics[width=0.8\textwidth]{./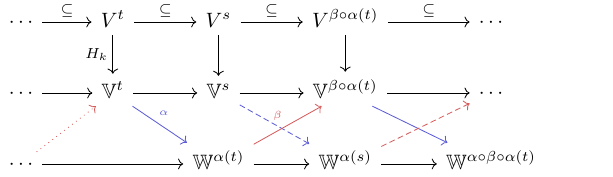}
    \vspace{10pt}
    \caption{Informal illustration of interleaving of persistence modules. Here $s\le t$ and $t \le (\beta \circ \alpha)(s)$ for two nondecreasing maps $\alpha, \beta$. See \cref{sec:interleaving} for a formal definition.}
    \label{fig:interleaving}
    \vspace{10pt}
\end{figure}

When the interleaving maps $\alpha$ and $\beta$ are \textit{additive} and \textit{identicial}, i.e., $\alpha(t) = \beta(t) = t + \epsilon$, such maps induce a pseudometric on persistence modules and, from the isometry theorem \citep{chazal2016structure}, a metric on persistence diagrams, referred to as the \emph{bottleneck distance},
$$
\Winf\qty\big( \dgm(\bbv), \dgm(\bbw) ) \defeq \inf\qty\Big{ \e > 0 : \bV \text{ and } \bW \text{ are } (\alpha,\alpha)\text{--interleaved for } \alpha: t \mapsto t + \e }.
$$
The bottleneck distance provides an avenue to study perturbations of persistence diagrams based on the input data, and is summarized in the following stability result. The stability of sublevel filtrations and unweighted offsets is due to \cite{cohen2007stability} and \cite{chazal2016structure}, whereas the stability of the $f$-weighted filtrations is due to \cite{buchet2016efficient} and \cite{anai2019dtm}.

\begin{proposition}[Stability of persistence diagrams]
\label{lemma:anai-et-al}
    For two compact sets ${\bX, \bY \subset \R^d}$,
    $$
        \winf\qty\big({ \dgm\pa{ \bbv[\Xb] }, \dgm\pa{ \bbv[\Yb]} }) \le \haus{\Xb, \Yb}[].\nn
    $$
    Furthermore, given two filter functions $f,g : \R^d \rightarrow \R$,
    $$
        \winf\qty\Big({ \dgm\pa{ \bbv[\Xb, f] }, \dgm\pa{ \bbv[\Xb, g]} }) \le \norminf{f-g}.\nn
    $$
    Additionally, given $h: \bX \cup \bY \rightarrow \R_+$, if $h$ is $L$--Lipschitz and $\haus{\bX,\bY} \le \e$, then
    $$
    \winf\qty\Big( \bbv[\bX,h], \bbv[\bY, h] ) \le \epsilon\pa{1+L^p}^{1/p}.
    $$
\end{proposition}

The stability of persistence diagrams guarantees that small changes in the function $f$ or the underlying data $\Xb$ result in small changes in the persistence diagrams, and is a key ingredient in establishing statistical guarantees. However, as noted in Section~\ref{sec:intro}, stability is insufficient to guarantee robustness, and this will form the starting point for our analysis in Section~\ref{sec:proposal}.

\begin{figure}
    \centering
    \begin{subfigure}[b]{0.32\textwidth}
        \includegraphics[width=\textwidth]{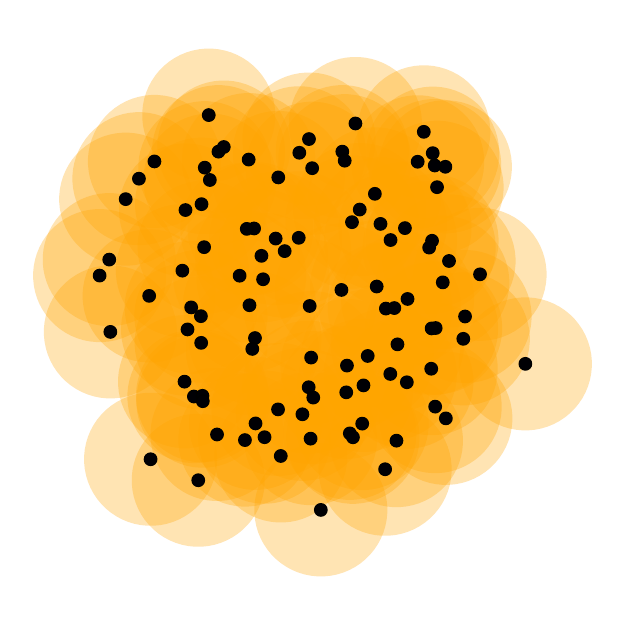}
        \caption{$V^t[\Xn]$ unweighted}
    \end{subfigure}
    \begin{subfigure}[b]{0.32\textwidth}
        \includegraphics[width=\textwidth]{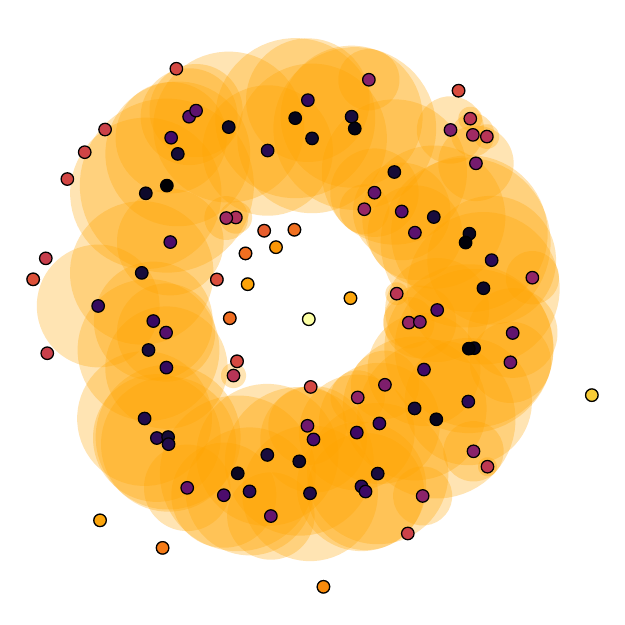}
        \caption{$V^t[\Xn, f]$ for $p=1$}
    \end{subfigure}
    \begin{subfigure}[b]{0.32\textwidth}
        \includegraphics[width=\textwidth]{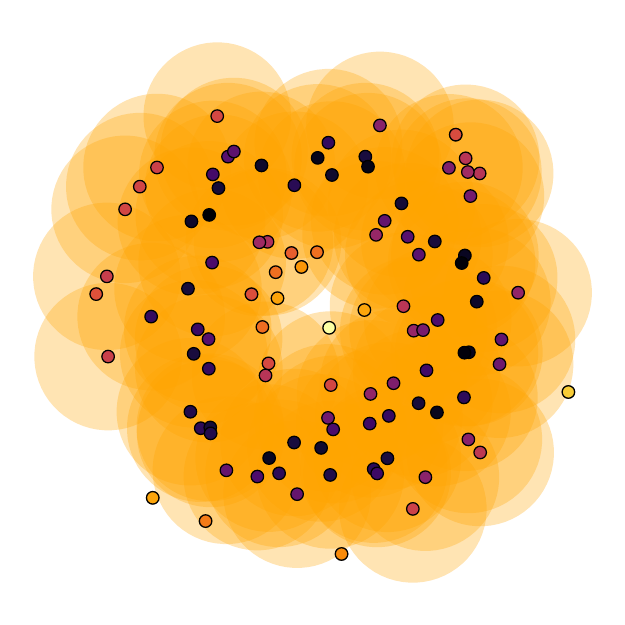}
        \caption{$V^t[\Xn, f]$ for $p=\infty$}
    \end{subfigure}
    \caption{Illustration of offsets for $t=0.5$ and $f(\xv) = \inf_{\yv \in \mathbb{S}^1}\norm{\xv-\yv}$.}
    \label{fig:ball}
\end{figure}

% Preliminaries
%%%%%%%%%%%%%%%%%%%%%%%%%%%%%%%%%%%%%%%%

%%%%%%%%%%%%%%%%%%%%%%%%%%%%%%%%%%%%%%%%
% Results

\section{Estimation thresholds under adversarial contamination}
\label{sec:minimax}
% RKHS
%%%%%%%%%%%%%%%%%%%%%%%%%%%%%%%%%%%%%%%%%%%%%%%%%%%%%%%%%%%%%%%%%%%%h%%

Let $\Xn = \pb{\Xv_1, \Xv_2, \dots, \Xv_n} \subset \R^d$ be a sample of $n$ observations. We assume that the samples are obtained under-sampling setting \ref{setting}. We emphasize that this setting encompasses the following~scenarios:
\vspace*{-0.25em}
\begin{enumerate}[label=(\alph*), itemindent=*, itemsep=-1pt]
    \item The samples $\Xn$ are obtained i.i.d.~from $\pr \in \mathcal{P}(\Xb, a, b)$ for compact $\Xb \subset \R^d$.
    \item Samples $\{\Xv_1^*, \dots, \Xv^*_n\}$ are first obtained i.i.d.~from $\pr \in \mathcal{P}(\Xb, a, b)$, and for a collection $\mathcal{O} \subset [n]$ of size $\abs{\mathcal{O}} = m < n/2$, an adversary is free to replace $\{\Xv^*_i: i \in \mathcal{O}\}$ with any points $\{\Yv_i: i \in \mathcal{O}\}$ of their choice. Then, $\Xn = \{\Xv^*_i: i \in \mathcal{O}^c\} \cup \{\Yv_i: i \in \mathcal{O}\}$ is shuffled and handed over to the topologist for inference, who has no prior knowledge of the original samples or the collection $\mathcal{O}$.
\end{enumerate}

The central objective is to derive a statistically optimal and computationally efficient estimator of $\dgm\pa{\bbv[\Xb]}$ which is robust to the misspecification scenarios detailed above, using the samples $\Xn$. It is, therefore, instructive to first consider the estimation thresholds for persistence diagrams under adversarial contamination. To this end, we adopt the minimax framework. Specifically, given $\Xn = \Xnm \cup \Ym$ obtained under sampling setting \ref{setting}, i.e.,  $\Xnm$ is observed i.i.d. from $\pr \in \mathcal{P}\equiv\mathcal{P}(\Xb, a, b)$ and $\Ym$ are the outliers, the minimax risk is given by
\eq{
    \mathfrak{R}_{n,m}(\mathcal{P}) = \inf_{\widehat\D_n} \sup_{\pr \in \mathcal{P}} \sup_{\Ym} \; \E_{_{\pr}}\qty\bigg[ \Winf\qty\Big( \widehat\D_n, \dgm(\bbv[\Xb]) ) ],
    \label{eq:minimax-risk}
}
where the infimum is taken over all persistence diagram estimators $\widehat\D_n$ based on the sample $\Xn$, and the expectation is taken over the randomness in $\Xnm \sim \pr$. The minimax risk $\mathfrak{R}_n(\mathcal{P})$ quantifies the best achievable performance of any estimator in estimating the target population quantity $\dgm(\bbv[\Xb])$ under the sampling setting \ref{setting}. The following result establishes a minimax lower bound for the estimation of persistence diagrams under adversarial contamination.

\begin{theorem}[Minimax lower bound]\label{thm:minimax}
    For $a >0$ and $b \in [d]$, suppose $\Xn = \Xnm \cup \Ym$ is obtained under sampling setting \ref{setting}. Then,
    \eq{
        \mathfrak{R}_{n,m}(\mathcal{P}) \gtrsim \qty(\frac{m}{2n-m})^{1/b} \vee \qty(\frac{\log{n}}{n})^{1/b}.\label{eq:minimax-n}
    }
    Moreover, if $m \equiv m_n = cn^\e$ for $0 < \e < 1$ and $c > 0$, then
    \eq{
        \mathfrak{R}_{n,m}(\mathcal{P}) \gtrsim \qty(\frac{1}{n^{1-\epsilon}})^{1/b}.
        \label{eq:minimax-rate}
    }
\end{theorem}

In other words, \cref{thm:minimax} establishes that the minimax rate of convergence roughly depends on the fraction of points from the true signal to the number of outliers, and, therefore, in the regime where $n, m \to \infty$ and $m=o(n)$, the rate of convergence is attenuated by a factor $\asymp(m/n)^{1/b}$. In the absence of outliers, i.e., when $m=0$, and when $b$ is an integer, Theorem~5 of \cite{chazal2015convergence} shows that the minimax rate has a necessary $\log{n}$ factor, and we believe that the same should hold true in the presence of outliers in \cref{eq:minimax-rate}. In what follows, we present an estimator to obtain outlier robust persistence diagrams whose statistical performance is guaranteed to be minimax-optimal up to a logarithmic factor.

\section{Empirical distance function using the Median-of-Means principle}
\label{sec:proposal}
% Proposal
%%%%%%%%%%%%%%%%%%%%%%%%%%%%%%%%%%%%%%%%%%%%%%%%%%%%%%%%%%%%%%%%%%%%%%

In the following section, we present an estimator to obtain outlier robust
persistence diagrams. Its statistical properties along with the influence analysis are presented in Sections~\ref{sec:statistical}--\ref{sec:influence}. In \cref{sec:lepski} we present a method for adaptively calibrating the tuning parameter using a data-driven procedure. To this end, the MoM Distance (\md{}) function $\dnq$ is defined as follows. 

\begin{definition}[\md{}]
    Given a collection of points $\Xn \subset \R^d$ and $1 \le Q \le n$, let  $\pB{S_1, S_2, \dots S_Q}$ be a partition of $\Xn$ into $Q$ disjoint blocks, such that each subset $S_q \subset \Xn$ comprises of $|{S_q}| = \floor{n/Q}$ samples\footnote{ \ Without loss of generality, we may assume that $n$ is divisible by $Q$, so that $n/Q \in \Z_+$}. The MoM distance function $\dnq: \R^d \rightarrow \R_{\ge 0}$ is defined to be
    \eq{
        \dnq(\yv) \defeq \med\qty\Big{ \dsf_{n, S_q}(\yv) : q \in [Q] } = \med\qty\Big{ \inf_{\xv \in S_q} \norm{\xv -\yv} : q \in [Q]}.
        \label{eq:momdist}
    }
    The proposed outlier robust persistence diagram $\dgm\pa{ \bbv[{\Xn, \dnq}] }$ is then obtained using $\dnq$-weighted filtration $V[\Xn, \dnq]$. 
    \label{def:mom}
\end{definition}

\begin{remark}
    {Without loss of generality, and for convenience of the theoretical analysis we assume that $Q$ is odd; in this case, the median in \cref{eq:momdist} is realized by a unique $q \in [Q]$. In practice, if $Q$ is even, then, as per convention, the median can be taken as the average of the two middle values.} Note that we recover the usual empirical distance function, i.e., $\dsf_{n, 1} \equiv \dsf_n$ when $Q = 1$.
\end{remark}
    
    \begin{remark} For each block $S_q$, distance function $d_{n,q} \in \F({\R^d})$ can be viewed as the Kuratowski embedding of $S_q$ \cite[pg.~4]{lim2020vietoris}. The most natural generalization of the multivariate median-of-means estimators proposed by \cite{minsker2015geometric} and \cite{lerasle2019monk} would suggest the following estimator as the natural candidate for \md{}: 
    \vspace*{-1em}
    $$
    \widetilde{\dsf}_{n,Q} = \arginf_{f \in \F(\R^d)} \sum_{q=1}^{Q} \norminf{f - \dsf_{n,S_q}},\nonumber
    \vspace*{-0.5em}
    $$
    where the median under consideration corresponds to the geometric median in $L_{\infty}(\R^d)$. Although $\widetilde{\dsf}_{n,Q}$ has its appeal from a theoretical perspective, the computation of $\widetilde{\dsf}_{n,Q}$ involves an infinite-dimensional optimization problem, making it infeasible in practice. In contrast, the proposed estimator in Definition~\ref{def:mom}, is a pointwise median-of-means estimator with a tractable computational cost. This has the promise of being highly modular, and widely applicable in many practical settings. The technical difficulty arises in showing that the pointwise estimator $\dnq$ achieves an exponential concentration bound around $\dx$ in the $L_{\infty}(\R^d)$ metric.   
\end{remark}

Similar to the proposed methodology in Definition~\ref{def:mom}, the procedure of partitioning the data $\Xn$ into smaller subsets, and then aggregating them as an estimator of persistent homology has been shown to satisfy several favorable properties by \cite{solomon2021geometry} and \cite{gomez2021curvature}, albeit in a different context. We argue that a similar principle, in our setting, also leads to provably robust estimators.

\begin{table}
    \caption{Comparison of computational complexity for robust weighted filtrations.}
    \vspace*{-0.75em}
    \centering
    \resizebox{\textwidth}{!}{\begin{tabular}{llll}
    \toprule
    Method & Pre-processing & Evaluation & Provably robust? \\ 
    \midrule
        % $V[\Xn, 0]$ (Standard, unweighted filtration) & NA  & NA & No \\ 
        $V[\Xn, \dnq]$ (\md{}--filtration) & $O\qty\big( n/Q \cdot \log(n/Q))$  & $O\qty\big( n \cdot (Q + \log n/Q) )$ & Yes \\ 
        $V[\Xn, \delta_{n, k}]$ \cite[DTM--filtration]{anai2019dtm} & $O( n \log n )$  & $O( kn \log n )$ & Sometimes \\ 
        $V[\Xn, \fns]$ \cite[RKDE--filtration]{vishwanath2020robust} & $O(n^2 \ell)$  & $O(n^2)$ & Sometimes \\ 
        $V[\Xn, \mathbf{C}_K]$ \cite[\kpdtm{}]{brecheteau2018k} & $O(n K L)$  & $O(kK\log K)$ & Sometimes \\ 
        \bottomrule
    \end{tabular}}
    \medskip
    {\scriptsize 
    $n=\#$samples, $Q=\#$blocks, $k=\!\floor{mn}\!=\!$ DTM parameter,\\[-1.25em] $\ell=\#$iterations of KIRWLS algorithm, $K=\#$centroids, $L=\#$iterations for \kpdtm{}.
    }
    \label{tab:comparison}
\end{table}

\begin{figure}[t]
    \centering
    \begin{subfigure}[b]{0.22\linewidth}
        \includegraphics[width=1.2\linewidth]{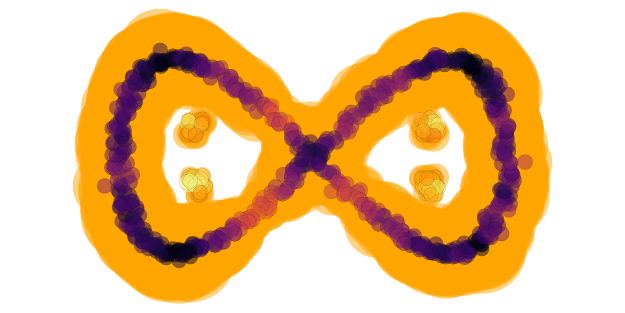}
        \caption{$V^t[\Xn, \dnq]$}
    \end{subfigure}
    \begin{subfigure}[b]{0.22\linewidth}
        \includegraphics[width=1.2\linewidth]{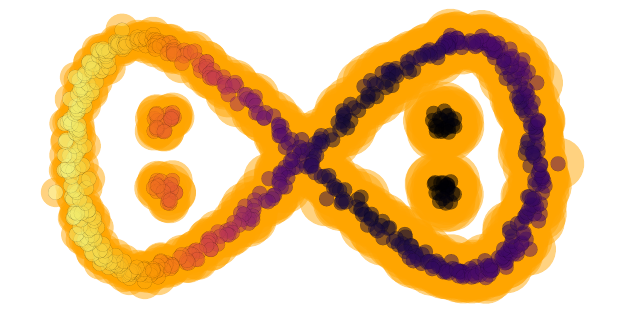}
        \caption{$V^t[\Xn, \fns]$}
    \end{subfigure}
    \begin{subfigure}[b]{0.22\linewidth}
        \includegraphics[width=1.2\linewidth]{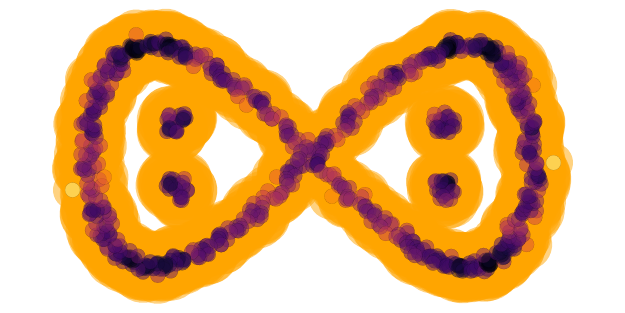}
        \caption{$V^t[\Xn, \delta_{n,k}]$}
    \end{subfigure}
    \begin{subfigure}[b]{0.22\linewidth}
        \includegraphics[width=1.2\linewidth]{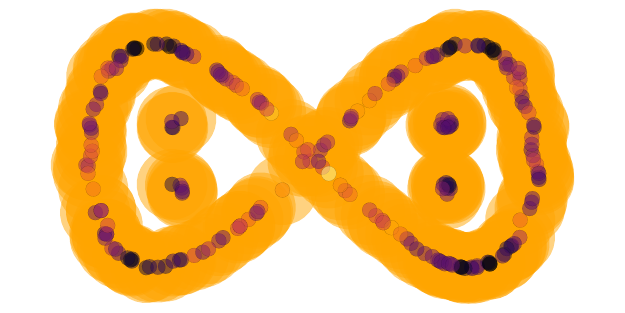}
        \caption{$V^t[\mathbf{C}_N]$}
    \end{subfigure}
    \includegraphics[height=0.17\linewidth]{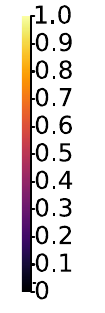}
    \caption{$\Xn$ with $n=620$ points from a Lemniscate with $m=80$ outliers. Illustration of the robust weighted filtrations with $p=1$ for \md{} $V^t[\Xn, \dnq]$, RKDE $V^t[\Xn, \fns]$, DTM $V^t[\Xn, \delta_{n,k}]$, and the k-PDTM $V^t[\mathbf{C}_N]$ filtration.}
    \label{fig:robust-examples}
\end{figure}

\subsection{Computational considerations} The first step in constructing the $f$-weighted filtration involves estimating the weights for the sample points, $w_i = f(\Xv_i)$ for all $i \in [n]$. Once this is done, the complexity of constructing the filtration $V[\Xn, f]$ does not depend on the specific choice of $f$. Table~\ref{tab:comparison} compares the complexities of four filtrations: (i) \md{} $\dnq$, (ii) robust kernel density estimator $\fns$ (RKDE, \citealp{vishwanath2020robust}), (iii) distance-to-measure $\delta_{n,k}$ (DTM, \citealp{anai2019dtm}), and (iv) $K$-Power Distance-to-measure $\delta_{n, k, K}$ (\kpdtm{}, \citealp{brecheteau2018k}). For a test point $\xv \in \R^d$, distances to blocks $S_q$ are efficiently computed using a $k$-d tree with pre-processing time $O(\abs{S_q} \log \abs{S_q})$ per block where $\abs{S_q}=n/Q$. This operation can be parallelized, and the overall pre-processing can be achieved in $O(\abs{S_q} \log\abs{S_q})$ time. Thereafter, $O(\log \abs{S_q})$ time is needed for a single query \cite[Chapter~10]{cormen2009introduction}. The results for each block $q\in[Q]$ are then aggregated to compute the median, which takes an additional $O(Q)$ time per query. This results in a total evaluation time of $O(n \cdot (Q + \log{\abs{S_q}}))$ for $n$ samples. 

The DTM with parameter $m$ involves finding the $k$th nearest neighbor for $k=\floor{mn}$, also optimized with a $k$-d tree, leading to a total complexity of $O(n \log n)$ for pre-processing and $O(n \cdot k \log n)$ for evaluation. The RKDE requires $O(n^2)$ time per iteration of the KIRWLS algorithm, with a total of $O(n^2 \ell)$ for $\ell$ iterations. Thereafter, the RKDE weights may be used to evaluate each query in $O(n)$ time. The \kpdtm{} constructs a quantized dataset via Lloyd's algorithm, with each iteration requiring $K$ comparisons to each of the $N$ points, resulting in $O(nKL)$ time across $L$ iterations, and outputs $K$ points. Typically, $L = O(n)$ iterations suffice. The trimmed variant of the \kpdtm{} shares the same preprocessing cost and produces a subset of $\Xn$ containing $n\alpha < n$ points, where $\alpha \in (0, 1)$ is a user-specified tuning parameter for the fraction of inliers. If $K \ll n$ or $\alpha \ll 1$, the \kpdtm{} and its trimmed variant offer significant speed-ups compared to weighted filtrations. The four filtrations are illustrated in Figure~\ref{fig:robust-examples}. A comparison of the computational and memory trade-offs are summarized in Table~\ref{tab:summary} for the experiment in \cref{exp:pathological}.

We conclude this section with the following property of the \md{} function.
% We conclude this section with the following result which shows that $\dnq$ is $1$-Lipschitz.
\vspace{-0.2em}
\begin{lemma}
    Given samples $\Xn = \pb{\Xv_1, \Xv_2, \dots, \Xv_n}$ and $Q < n$,
    \eq{
        \abs{ \dnq(\xv) - \dnq(\yv) } \le \norm{ \xv - \yv }, \qq{for all $\xv, \yv \in \R^d$.}\nn
    }
    \label{lemma:lipschitz}
\end{lemma}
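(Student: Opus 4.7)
The plan is to reduce the lemma to two elementary facts and then compose them. First, each per-block distance function $\dsf_{n, S_q}(\cdot) = \inf_{\xv \in S_q}\|\xv - \cdot\|$ is $1$-Lipschitz on $\R^d$ by the standard triangle-inequality argument for a distance-to-set function: for any $\xv \in S_q$ and any $\yv, \zv \in \R^d$, one has $\|\xv - \yv\| \le \|\xv - \zv\| + \|\zv - \yv\|$; infimizing over $\xv \in S_q$ and symmetrizing gives $|\dsf_{n, S_q}(\yv) - \dsf_{n, S_q}(\zv)| \le \|\yv - \zv\|$. Second, I would show that the median map $M : \R^Q \to \R$, $(v_1, \dots, v_Q) \mapsto \med\{v_1, \dots, v_Q\}$, is itself $1$-Lipschitz with respect to the $\ell_\infty$ norm on $\R^Q$. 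The lemma then follows by applying $M$ to the coordinatewise bound.

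For the median step, I would argue as follows: assume without loss of generality $M(a) \ge M(b)$ and set $\delta = \|a - b\|_\infty$. By the definition of the median, at least $\lceil Q/2 \rceil$ of the entries $a_q$ satisfy $a_q \ge M(a)$, and the componentwise bound $b_q \ge a_q - \delta$ then forces at least $\lceil Q/2 \rceil$ of the $b_q$'s to lie above $M(a) - \delta$. Hence $M(b) \ge M(a) - \delta$, i.e., $M(a) - M(b) \le \delta$. The argument is insensitive to the tie-breaking convention used when $Q$ is even, since the median always lies between the two middle order statistics, and the count-based argument still applies.

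Composing the two steps with the vectors $a, b \in \R^Q$ defined by $a_q = \dsf_{n, S_q}(\xv)$ and $b_q = \dsf_{n, S_q}(\yv)$, the first step gives $\|a - b\|_\infty \le \|\xv - \yv\|$ and the second step gives $|\dnq(\xv) - \dnq(\yv)| = |M(a) - M(b)| \le \|a - b\|_\infty$, which chained together yield the claim. There is no substantive obstacle in this proof; both ingredients are standard. The only point deserving care is the $\ell_\infty$-Lipschitz property of the median map, which is where I would spend most of the writing, since it is the one piece not already in the literature in quite this form.
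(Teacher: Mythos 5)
Your two-step decomposition — each per-block distance-to-set function is $1$-Lipschitz, and the median map $\R^Q \to \R$ is $1$-Lipschitz in $\ell_\infty$ — is mathematically equivalent to the paper's argument and reaches the same conclusion. The paper, however, proves the second step more economically: from $\dsf_{n,q}(\xv) \le \dsf_{n,q}(\yv) + \|\xv - \yv\|$ for every $q$, it passes directly to $\med_q \dsf_{n,q}(\xv) \le \med_q \dsf_{n,q}(\yv) + \|\xv - \yv\|$ using only that the median is coordinatewise monotone and translation-equivariant (adding the same constant to every coordinate shifts the median by that constant). These two properties together give $\ell_\infty$-$1$-Lipschitzness for any reasonable median convention in one line, with no case analysis.

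Your count-based proof of the median's $\ell_\infty$-Lipschitz property has a small gap for even $Q$. Having $\lceil Q/2 \rceil$ of the $b_q$ above $M(a)-\delta$ pins down only the order statistic $b_{(Q/2+1)}$, not the median itself if the median is defined as $(b_{(Q/2)}+b_{(Q/2+1)})/2$: nothing in the count argument constrains $b_{(Q/2)}$ from below. The claim that "the median always lies between the two middle order statistics, and the count-based argument still applies" does not actually close this hole, because the count argument says nothing about the lower of the two middle order statistics. The conclusion is of course still true — you can repair the argument by observing that each order statistic $v \mapsto v_{(i)}$ is itself $\ell_\infty$-$1$-Lipschitz (so both $b_{(Q/2)} \ge a_{(Q/2)} - \delta$ and $b_{(Q/2+1)} \ge a_{(Q/2+1)} - \delta$ hold, and you average) — or more simply by switching to the paper's monotonicity-plus-translation argument, which sidesteps the convention issue entirely.
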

\vspace*{-1.25em}
In other words, the \md{} function $\dnq$ is $1$-Lipschitz --- a key property for distance-like functions in geometric and topological inference \cite[Chapter~9]{boissonnat2018geometric}.

\section{Statistical properties of $\bbv[\dnq]$}
\label{sec:statistical}

We begin our analysis by characterizing the persistence diagrams obtained using the sublevel filtration of $\dnq$. The following result (proved in Section~\ref{proof:theorem:momdist-sublevel}),  establishes that $\dgm\pa{\bbv[\dnq]}$ is a statistically consistent estimator of target population quantity $\dgm\pa{\bbv[\Xb]}$ under sampling setting~\ref{setting}, and establishes its rate of convergence in the $\Winf$ metric. %\bks{Verify the calculation in the proof of Theorem 3.1 and update the result by using an extra factor of 1/2. Does your proof show the constant that is hidden in (5), particularly its dependence in Q? If so, show the dependence on $Q$ in (5).}

\begin{theorem}[Sublevel filtration]
    Suppose $\pr \in \mathcal{P}(\bX, a, b)$ is a probability distribution with support~$\bX$ satisfying the $(a, b)-$standard condition and $\Xn$ is obtained under the sampling condition \ref{setting}. For $2m < Q < n$ and for all $0 < \delta < e^{-(1+b)Q}$,
    \eq{
        \pr\qty\bigg{ \Winf\qty\bigg( \dgm\pa{\bbv[\dnq]}, \dgm\pa{\bbv[\Xb]} ) \le {2}\mathfrak{g}(n, Q, a, b) } \ge 1 - \delta,
        \label{eq:mom-confidence-band}
    } 
    where 
    \vspace*{-0.25em}
    \eq{
        \mathfrak{g}(n, m, Q, a, b) = \qty\bigg( \frac{Q\log(n / Q)}{a n} + \frac{4Q \log(1/\delta)}{a(Q-2m)n} )^{1/b}.\label{eq:dnq-rate}
    }
    Furthermore, if the number of outliers grows with n as $m = cn^\epsilon$ for $c > 0$ and $\epsilon \in [0, 1)$ then {for all $Q=Cn^\e$ where $C > 2c$,}
    \eq{
        \E\qty\bigg[ \Winf\qty\bigg( \dgm\pa{\bbv[\dnq]}, \dgm\pa{\bbv[\Xb]} ) ] \lesssim \pa{\f{\log n}{n^{1-\e}}}^{1/b}.
        \label{eq:rate-with-noise}
    }
    \label{theorem:momdist-sublevel}
\end{theorem}

\begin{remark}
    The following salient observations can be made from {Theorem~\ref{theorem:momdist-sublevel}}.
    \begin{enumerate}[label=\textup{{(\roman*)}}, itemindent=*]
        \item For the rate of convergence in \cref{eq:rate-with-noise}, the dependence of $Q$ with $n$ is implicit through the dependence on $m$ with $n$ and the constraint that $2m < Q$. Given $m=cn^\epsilon$, the optimal choice of $Q$ is obtained when $Q = 3cn^\epsilon$. On the other hand, if $m$ is fixed relative to $n$, then choosing any fixed $Q > 2m$ yields a convergence rate of $(\log{n}/n)^{1/b}$, which coincides with the minimax rate for persistence diagram estimation in the absence of outliers (\cref{thm:minimax}). In other words, in the regime where $m$ is fixed relative to $n$, there is no price to pay for accommodating outliers in the data. Similarly, it becomes apparent that accommodating for more adverse noise conditions comes at the price of an attenuated rate of convergence.
        
        \item The two terms appearing in $\mathfrak{g}(n, m, Q, a, b)$ may be interpreted as follows: The first term is similar to the term appearing in \citet[Theorem~2]{chazal2015convergence} with an effective sample size of $n/Q$ instead of $n$, which is a consequence of the Median-of-Means procedure. The second term incorporates the desired confidence level  adaptive to the volume dimension $b>0$, with an effective sample size of $n/Q$. Notably, as the number of outliers $m$ increases, the number of blocks $Q$ must also increase; thereby widening the resulting confidence band.
        
        \item The admissible confidence level $\delta$ for constructing the confidence band is implicitly dependent on the parameter $Q$. This phenomenon is unavoidable with estimators based on the median-of-means principle. We refer the reader to \citet[Section~2.4]{lugosi2019mean} for a discussion on how robustness must come at the price of the confidence level $\delta$ being restricted.
    \end{enumerate}
\end{remark}

The proof of Theorem~\ref{theorem:momdist-sublevel} relies on Lemma~\ref{lemma:mom} (in the supplementary material) which allows us to control the deviation of a empirical processes arising from general classes of pointwise median-of-means estimators. 

\section{Statistical properties of $V[\Xn, \dnq]$}
\label{sec:statistical-2}

In practice, the sublevel filtration $V[\dnq]$ cannot be computed exactly, and one must rely on approximations using cubical homology. To this end, we now turn our attention to $\dnq$-weighted filtrations computed on the sample points directly. Before we study the statistical properties of the $\dnq$--weighted filtration, we state the following result which guarantees that the persistence module $\bbv[{\Xn, \dnq}]$ is sufficiently regular and is amenable to the construction of a persistence diagram\footnote{$q$-tame and pointwise finite dimensional persistence modules result in persistence diagrams whose points in $\Omega$ are locally finite.}.

\begin{lemma}[Regularity] %TODO:
    For $\Xn$ obtained under sampling setting \ref{setting} and $\dnq$ defined in \cref{eq:momdist}, the persistence module $\bbv[{\Xn, \dnq}]$ is $q-$tame and pointwise finite-dimensional.
    \label{lemma:momdist-regularity}
\end{lemma}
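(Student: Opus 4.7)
The plan is to exploit the fact that $V[\Xn, \dnq]$ is built on only $n$ points, which forces the underlying simplicial structure to be finite at every resolution. First, for each $t \in \R$, the weighted offset $V^t[\Xn, \dnq]$ is by construction a union of at most $n$ closed Euclidean balls centered at the points $\Xv_1, \ldots, \Xv_n$, where those balls of negative radius are empty by the convention in Section~\ref{sec:weightedrips}. Since closed Euclidean balls and all their finite intersections are convex, and therefore either empty or contractible, the nerve lemma yields a homotopy equivalence $V^t[\Xn, \dnq] \simeq \Ct[][\Xn, \dnq][]$. The weighted \cech{} complex $\Ct[][\Xn, \dnq][]$ has vertex set $\Xn$, so it contains at most $2^n - 1$ simplices, and its homology with coefficients in the field $\bF$ is a finite-dimensional vector space.

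This establishes pointwise finite-dimensionality of $\bbv[\Xn, \dnq]$. From here $q$-tameness follows immediately: for every $s \le t$, the structure map $\phi_s^t$ is a linear map between two finite-dimensional vector spaces, hence its rank is bounded above by the minimum of the source and target dimensions, which is finite.

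There is no substantive obstacle in this argument — both conclusions reduce to the combinatorial finiteness of the nerve built on $n$ points. The only subtlety worth flagging is the application of the nerve lemma to a weighted cover whose elements carry different effective radii; since each cover element is nevertheless a Euclidean ball (or empty) in $\R^d$, convexity and contractibility of finite intersections are preserved, and the standard nerve lemma applies verbatim. In particular, neither the sampling condition \samp{} nor the Lipschitz property of $\dnq$ (Lemma~\ref{lemma:lipschitz}) plays an essential role: the same statement holds for any finite point set $\Xn \subset \R^d$ equipped with a nonnegative weight function.
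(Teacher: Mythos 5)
Your proof is correct, and it takes a genuinely different route from the paper. The paper's proof consists of a single-line citation: it asserts that the lemma is a direct consequence of \citet[Proposition~3.1]{anai2019dtm}, which establishes $q$-tameness of weighted filtrations under conditions on the weight function (such as Lipschitz continuity) and the underlying set. You instead give a self-contained elementary argument: since $\Xn$ is finite, $V^t[\Xn, \dnq]$ is for every $t$ a union of at most $n$ closed convex balls in $\R^d$ (with the convention that negative-radius balls are empty), the nerve lemma applies, and the nerve is a finite simplicial complex on $n$ vertices, so homology with field coefficients is finite-dimensional at every resolution; pointwise finite-dimensionality then immediately yields $q$-tameness because every structure map is a linear map between finite-dimensional spaces.

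Your closing observation that neither the sampling setting $(\mathscr{S})$ nor Lemma~\ref{lemma:lipschitz} is needed is accurate and, in fact, sharpens the stated hypotheses: the finiteness of $\Xn$ alone does all the work. What the paper's citation-based route buys is brevity and alignment with the general framework of \cite{anai2019dtm}, which covers the (more demanding) case of infinite compact $\bX$ where the purely combinatorial nerve-counting argument is unavailable; what your route buys is transparency, a minimal set of hypotheses for the finite setting actually at hand, and independence from the Lipschitz machinery. The only point worth stating explicitly in a polished write-up is that you are invoking a version of the nerve theorem valid for finite covers by closed convex sets in $\R^d$, which is the same version the paper itself uses when passing from offsets to \cech{} complexes; this is standard and closes no gap, but should be flagged for completeness.
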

The proof of Lemma~\ref{lemma:momdist-regularity} is a direct consequence of \citet[Proposition~{3.1}]{anai2019dtm}, and ensures that the persistence diagram $\dgm\pa{\bbv[\Xn, \dnq]}$ is well-defined. We now turn our attention to the $\dnq$-weighted filtration $V[\Xn, \dnq]$. The following result, which establishes an analogue of the stability result for $\dnq$-weighted filtrations, but unlike the stability for the usual distance function $\dn$, it is also robust to outliers. 

\begin{theorem}[Stability\,\&\,robustness of $\dnq$-weighted filtrations]
    Let $\Xn\! =\!\Xnm \cup \Ym$ be a collection of points obtained under the sampling condition \ref{setting}. For $Q > 2m$ let $\dnq$ be the \textup{\md{}} function computed on the contaminated points $\Xn$ and let $\dsf_{n-m}$ be the distance function w.r.t.~the inliers $\Xnm$. Then
    \eq{
        \Winf\qty\bigg( \dgm(\bVt[  ][\Xn, \dnq]), \dgm(\bVt[  ][\Xnm, \dsf_{n-m}]) ) 
        &\le \sup_{{\xv \in \Xnm}}\dnq(\xv) + \norminf{\dnq - \dsf_{n-m}}\nn\\
        &\qquad\quad + \qty(1 - \f1p)t(\Xnm), \nn
    }
    where 
    $$
    t(\Xnm) = \inf \qty\Big{t>0: {\textstyle \bigcap\limits_{\xv \in \Xnm} }  B_{f, \rho}(\xv, t) \neq \varnothing} 
    $$ 
    is the filtration value $t$ at which the inliers the $\dnq$-weighted balls of the inliers  $\Xnm$ form a connected component. In particular, when $p=1$ we have
    \eq{
        \Winf\qty\bigg( \dgm(\bVt[  ][\Xn, \dnq]), \dgm(\bVt[  ][\Xnm, \dsf_{n-m}]) ) \le \sup_{\xv \in \Xnm}\!\!\dnq(\xv) + \norminf{\dnq - \dsf_{n-m}}.
        \label{eq:stability-p1}
    }
    \label{theorem:momdist-stability}
\end{theorem}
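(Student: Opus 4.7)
The plan is to insert the intermediate persistence module $\bVt[ ][\Xnm, \dnq]$ and apply the triangle inequality for the bottleneck distance, yielding
\[
\Winf\qty\big( \bVt[ ][\Xn, \dnq], \bVt[ ][\Xnm, \dsf_{n-m}] ) \le \Winf\qty\big( \bVt[ ][\Xn, \dnq], \bVt[ ][\Xnm, \dnq] ) + \Winf\qty\big( \bVt[ ][\Xnm, \dnq], \bVt[ ][\Xnm, \dsf_{n-m}] ).
\]
The first summand compares the same weight function $\dnq$ across the nested point sets $\Xnm \subseteq \Xn$, which is the natural setting of Lemmas~\ref{lemma:ab-filtration} and~\ref{lemma:ab-module}; the second summand keeps the underlying point set $\Xnm$ fixed and changes the weight, which is the natural setting of Lemma~\ref{lemma:anai-et-al}(i).

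For the first summand I would apply Lemmas~\ref{lemma:ab-filtration}--\ref{lemma:ab-module} with $\bX = \Xnm$, $\bY = \Xn$ and $f = \dnq$. The hypothesis $\inf_{\xv \in \Xnm}\norm{\xv - \yv} \le \dnq(\yv) + a$ is automatic for $\yv \in \Xnm$, where the infimum vanishes. For an outlier $\yv \in \Ym$, the infimum equals $\dsf_{n-m}(\yv)$, so the constraint reduces to $\dsf_{n-m}(\yv) - \dnq(\yv) \le a$, and the uniform choice $a = \norminf{\dnq - \dsf_{n-m}}$ is admissible. Lemma~\ref{lemma:ab-module} then produces the bound $\norminf{\dnq - \dsf_{n-m}} + \sup_{\xv \in \Xnm}\dnq(\xv) + (1 - 1/p)\,t(\Xnm)$ on the first summand, where $t(\Xnm)$ is precisely the filtration value of the simplex on $\Xnm$ inside $V[\Xnm, \dnq]$, matching the quantity in the statement.

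For the second summand, Lemma~\ref{lemma:anai-et-al}(i) yields an $(\alpha, \alpha)$--interleaving for $\alpha: t \mapsto t + \sup_{\xv \in \Xnm}|\dnq(\xv) - \dsf_{n-m}(\xv)|$. The key observation is that $\dsf_{n-m}$ vanishes identically on $\Xnm$, so this interleaving constant collapses to $\sup_{\xv \in \Xnm}\dnq(\xv)$. Summing the two contributions and absorbing the duplicated supremum via the elementary estimate $\sup_{\xv \in \Xnm}\dnq(\xv) \le \norminf{\dnq - \dsf_{n-m}}$ then delivers the claimed inequality. The specialization~\eqref{eq:stability-p1} follows immediately by setting $p = 1$, which kills the $(1 - 1/p)\,t(\Xnm)$ term.

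The main technical hurdle lies in obtaining a uniform control of $\sup_{\yv \in \Ym}(\dsf_{n-m}(\yv) - \dnq(\yv)) \le a$: since the outliers $\Ym$ are chosen adversarially, no direct geometric handle is available on this quantity, so one is forced to pass to the deterministic upper bound $\norminf{\dnq - \dsf_{n-m}}$. Crucially, it is precisely this uniform deviation that is subsequently controlled statistically via the concentration machinery underlying Proposition~\ref{theorem:momdist-sublevel}, thereby turning the stability estimate into a convergence rate. A secondary, bookkeeping-level subtlety is ensuring that the filtration value $t(\Xnm)$ produced by Lemma~\ref{lemma:ab-module} coincides with the one appearing in the theorem statement, which is immediate from our choice $f = \dnq$ in the first summand.
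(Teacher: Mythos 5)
Your triangle-inequality decomposition through the intermediate module built from $\Xnm$ with weight $\dnq$ matches the paper's proof, and your observation that the second summand collapses to $\sup_{\xv\in\Xnm}\dnq(\xv)$ (since $\dsf_{n-m}$ vanishes on $\Xnm$) is correct and in fact slightly sharper than the paper's quoted $\norminf{\dnq - \dsf_{n-m}}$. The gap is in your treatment of the first summand.

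You apply Lemmas~\ref{lemma:ab-filtration}--\ref{lemma:ab-module} with $a = \norminf{\dnq - \dsf_{n-m}}$, asserting that no geometric handle controls $\sup_{\yv\in\Ym}\qty\big(\dsf_{n-m}(\yv) - \dnq(\yv))$. But there is one, and it yields $a = 0$; this is the crux of the result and the whole reason the hypothesis $Q > 2m$ is there. For any $\yv$ and any outlier-free block $q$ (so $S_q \subseteq \Xnm$) one has $\dsf_{n,q}(\yv) = \inf_{\xv\in S_q}\norm{\xv-\yv} \ge \inf_{\xv\in\Xnm}\norm{\xv-\yv} = \dsf_{n-m}(\yv)$. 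Since $Q > 2m$, strictly more than half of the $Q$ blocks are outlier-free, so by the pigeonhole principle the median over all blocks satisfies $\dnq(\yv) = \med\{\dsf_{n,q}(\yv):q\in[Q]\} \ge \dsf_{n-m}(\yv)$. Hence the hypothesis of Lemma~\ref{lemma:ab-filtration} holds with $a = 0$, not just with $a = \norminf{\dnq-\dsf_{n-m}}$.

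Without this, your proof does not close quantitatively: with $a = \norminf{\dnq-\dsf_{n-m}}$ the two summands total $\norminf{\dnq-\dsf_{n-m}} + 2\sup_{\xv\in\Xnm}\dnq(\xv) + (1-\frac{1}{p})\,t(\Xnm)$, which overshoots the stated bound by an additive $\sup_{\xv\in\Xnm}\dnq(\xv)$, and the proposed absorption $\sup_{\xv\in\Xnm}\dnq(\xv) \le \norminf{\dnq-\dsf_{n-m}}$ runs the wrong direction---it inflates rather than shrinks the bound. Once you replace your $a$ by $0$, the first summand drops to $\sup_{\xv\in\Xnm}\dnq(\xv) + (1-\frac{1}{p})\,t(\Xnm)$ and the proof closes immediately, whether one keeps the paper's $\norminf{\dnq - \dsf_{n-m}}$ for the second summand or your sharper $\sup_{\xv\in\Xnm}\dnq(\xv)$ followed by that same elementary estimate.
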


\begin{remark}The following observations follow from Theorem~\ref{theorem:momdist-stability}.
\begin{enumerate}[label=\textup{{(\roman*)}}, itemindent=*]
    \item In contrast to what would follow from Lemma~\ref{lemma:anai-et-al}~(ii) for the standard unweighted filtration, the term appearing in the r.h.s.~of \cref{eq:stability-p1} completely eliminates the dependence on the Hausdorff distance between $\Xn$ and $\Xnm$ in the $\dnq-$filtration. More generally, the same bound in {Theorem~\ref{theorem:momdist-stability}} holds even when $V[\Xn,\dnq]$ is replaced by $V[\mathbb{M}, \dnq]$ for any set $\mathbb{M} \supseteq \Xnm$. 

    \item Notably, $V[\Xn, \dnq]$ remains resilient to outliers. To see this, observe that the first term appearing in the r.h.s. of \cref{eq:stability-p1} may be bounded as
    \eq{
        \sup_{\xv \in \Xnm}\dnq(\xv) = \sup_{\xv \in \Xnm}\abs{ \dnq(\xv) - \dx(\xv) } \le \norminf{ \dsf_{n, Q} - \dx },\nn
    }
    where the first equality follows from the fact that $\dx(\xv)=0$ for all $\xv \in \Xnm$. Therefore, from the proof of Theorem~\ref{theorem:momdist-sublevel}, the r.h.s.~of \cref{eq:stability-p1} vanishes with high probability for sufficiently large sample sizes. 
    
    \item For $p=1$, a similar analysis for the DTM-filtrations appears in \cite[Theorem~{4.5}]{anai2019dtm} and the bottleneck distance is bounded above as
    \eq{
        \winf\qty\bigg( \dgm\pa{\bbv[\Xn, \delta_{n, k}]}, \dgm\pa{\bbv[\Xnm, \delta_{n-m, k}]} ) \le \sqrt{\f nk} W_2\pa{\Xnm, \Xn} + \sup_{\mathclap{\xv \in \Xnm}}\delta_{n-m, k}.\nn
    }
    While the last term on the r.h.s. converges to the uncontaminated population analogue with high probability, the first term involving the Wasserstein distance $W_2(\Xnm, \Xn)$ can be large even for a few extreme outliers. In contrast, the r.h.s. of \cref{eq:stability-p1} converges to zero with high probability with no assumptions on the outliers $\Ym$. 
    \item  {The power parameter $p$ determines a computational vs. statistical trade-off in Theorem~\ref{theorem:momdist-stability}. When $p=1$, the $\dnq$-weighted filtration, $\bVt[  ][\Xn, \dnq]$, provides a provably good approximation of the underlying signal, $\bVt[  ][\Xnm, \dsf_{n-m}]$. However, as $p$ increases the number of non-trivial points in the filtration $\bVt[  ][\Xn, \dnq]$ is non-increasing \cite[Proposition~8]{anai2019dtm}. Therefore, the $\dnq$-weighted persistence diagrams obtained using $\bVt[  ][\Xn, \dnq]$ for larger $p$ are \textit{sparser} than their counterpart when $p=1$.}

    \item The proof of the result is based on a generalization of the ideas presented in Lemma~4.8 and Proposition~4.9 in \cite{anai2019dtm} for DTM-filtrations. Lemmas~\ref{lemma:ab-filtration}~and~\ref{lemma:ab-module} in the supplementary material hold for general $f$-filtrations which satisfy a certain property, and the proof of Theorem~\ref{theorem:momdist-stability} follows by showing that $\dnq$ satisfies the required property.
\end{enumerate}
\label{remark:stability}
\end{remark}

With this background, we are now in a position to state our main result, which characterizes the rate of convergence for the $\dnq$--weighted filtration on the contaminated sample points $V[\Xn, \dnq]$ to $V[\bX]$. 

\begin{theorem}[$\dnq$-weighted filtration]
    Let $p=1$. Suppose $\pr \in \mathcal{P}(\bX, a, b)$ is a probability distribution with support $\bX$ satisfying the $(a, b)-$standard condition, and $\Xn = \Xnm \cup \Ym$ is obtained under sampling condition \ref{setting}. Then, for $2m < Q < n$ and for all $\delta \in (0, 1)$,
    \eq{
        \pr\qty\Bigg{\Winf\qty\bigg( \bVt[  ][\Xnm \cup \Ym, \dnq], \bvt{}[\bX] )  \le {2}\mathfrak{f}(n, m, Q, \delta_1, \delta_2)} \ge 1 - \delta,\nn
    }
    where
    {
    \eq{
        \mathfrak{f}(n, m, Q, \delta_1, \delta_2) \defeq 2\qty\bigg( \frac{Q\log(n / Q)}{an} + \frac{4Q \log(1/\delta_1)}{a(Q-2m)n} )^{1/b}\!\! +\! \qty\bigg( \frac{\log (n-m)}{a (n-m)} + \frac{4 \log(1/\delta_2)}{a (n-m)} )^{1/b},\nn
    }
    }
    for $\delta_1, \delta_2 \in (0, 1)$ such that $\delta_1 \le e^{-(1+b)Q}$ and $\delta_1 + \delta_2 = \delta$. In particular, if $m_n = cn^\e$ for $0 \le \e < 1$, then {for all $Q = Cn^\e$ where $C > 2c$,}
    \eq{
        \E\qty\bigg[ \Winf\qty\bigg( \bVt[  ][\Xnm \cup \Ym, \dnq], \bvt{} [\bX] ) ] \lesssim \qty\Bigg( \f{\log n}{n^{1-\e}} )^{1/b}.
        \label{eq:dnq-rate-1}
    }
    \label{theorem:momdist-consistency}
\end{theorem}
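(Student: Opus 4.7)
The plan is to reduce the bottleneck bound on $V[\Xn,\dnq]$ vs.~$V[\bX]$ to controlling two quantities that we already understand: the pointwise $\sup$-distance $\norminf{\dnq-\dx}$ (handled by the concentration that underlies Theorem~\ref{theorem:momdist-sublevel}), and the Hausdorff distance $\haus{\Xnm,\bX}$ (handled by the classical $(a,b)$-standard concentration for the empirical distance function). First I would invoke the triangle inequality for $\Winf$ to write
\[
\Winf\!\qty\big(\dgm(\bbv[\Xn,\dnq]),\dgm(\bbv[\bX]))\le \Winf\!\qty\big(\dgm(\bbv[\Xn,\dnq]),\dgm(\bbv[\Xnm,\dsf_{n-m}])) + \Winf\!\qty\big(\dgm(\bbv[\Xnm,\dsf_{n-m}]),\dgm(\bbv[\bX])).
\]

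For the first summand I would apply Theorem~\ref{theorem:momdist-stability} with $p=1$, which yields
\[
\Winf\!\qty\big(\dgm(\bbv[\Xn,\dnq]),\dgm(\bbv[\Xnm,\dsf_{n-m}]))\le \sup_{\xv\in\Xnm}\dnq(\xv)+\norminf{\dnq-\dsf_{n-m}}.
\]
Since every inlier $\xv\in\Xnm\subseteq\bX$ satisfies $\dx(\xv)=0$, we have $\sup_{\xv\in\Xnm}\dnq(\xv)=\sup_{\xv\in\Xnm}|\dnq(\xv)-\dx(\xv)|\le\norminf{\dnq-\dx}$, and another triangle inequality gives $\norminf{\dnq-\dsf_{n-m}}\le\norminf{\dnq-\dx}+\norminf{\dx-\dsf_{n-m}}$. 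Thus the first summand is dominated by $2\norminf{\dnq-\dx}+\norminf{\dx-\dsf_{n-m}}$. For the second summand I would observe that $\dsf_{n-m}$ vanishes on $\Xnm$, so $V[\Xnm,\dsf_{n-m}]=V[\Xnm]$, and the classical stability lemma (plus $\Xnm\subseteq\bX$) yields $\Winf(\dgm(\bbv[\Xnm]),\dgm(\bbv[\bX]))\le\haus{\Xnm,\bX}=\norminf{\dx-\dsf_{n-m}}$. Altogether
\[
\Winf\!\qty\big(\dgm(\bbv[\Xn,\dnq]),\dgm(\bbv[\bX]))\le 2\norminf{\dnq-\dx}+2\,\haus{\Xnm,\bX}.
\]

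Now I would plug in the two concentration bounds. The bound for $\norminf{\dnq-\dx}$ is exactly the quantity established in the proof of Theorem~\ref{theorem:momdist-sublevel}: with probability at least $1-\delta_1$, whenever $\delta_1\le e^{-(1+b)Q}$ and $2m<Q<n$,
\[
\norminf{\dnq-\dx}\le\qty\bigg(\frac{Q\log(n/Q)}{a(n/Q)}+\frac{4Q\log(1/\delta_1)}{a(Q-2m)n})^{1/b}.
\]
For $\haus{\Xnm,\bX}$, the standard $(a,b)$-standard concentration (cf.~\citealt{chazal2015convergence}) gives, with probability at least $1-\delta_2$,
\[
\haus{\Xnm,\bX}\le\qty\bigg(\frac{\log(n-m)}{a(n-m)}+\frac{4\log(1/\delta_2)}{a(n-m)})^{1/b}.
\]
A union bound over the two events with $\delta_1+\delta_2=\delta$ delivers the high-probability statement (the constant factors of $2$ are absorbed since the displayed bound $\mathfrak f$ already matches the sum of these two pieces up to constants).

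For the in-expectation rate \eref{eq:dnq-rate-1}, I would integrate the tail in the standard way and then optimize the free parameters $Q$ and $\delta$ as functions of $n$. With $m=cn^\epsilon$, choose $Q\asymp n^\epsilon$ (which satisfies $Q>2m$ for $n$ large) and $\delta_1=e^{-(1+b)Q}$, $\delta_2=1/n$; the first concentration term then contributes $(\log(n^{1-\epsilon})/n^{1-\epsilon})^{1/b}$, which dominates the $\haus{\Xnm,\bX}$ contribution of order $(\log n/n)^{1/b}$, giving the claimed rate $(\log n/n^{1-\epsilon})^{1/b}$. The main delicate point will be \emph{the interplay between the constraint $\delta_1\le e^{-(1+b)Q}$ and the choice of $Q$ in the expectation bound}: one must verify that the tail integral past the high-probability region is of lower order, which amounts to checking that the boundedness of $\dnq$ and $\dsf_{n-m}$ on compact regions of interest is sufficient to make the residual contribution vanish faster than $n^{-(1-\epsilon)/b}$.
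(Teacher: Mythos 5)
Your proof is correct in spirit and reaches the stated rate, but it takes a genuinely different decomposition than the paper, and that difference costs you a constant. The paper interposes two intermediate modules, bounding
\[
\Winf\qty\big(\bbv[\Xn,\dnq],\bbv[\bX]) \le \underbrace{\Winf\qty\big(\bbv[\Xn,\dnq],\bbv[\Xnm,\dnq])}_{\le\ \sup_{\Xnm}\dnq\ \le\ \norminf{\dnq-\dx}} + \underbrace{\Winf\qty\big(\bbv[\Xnm,\dnq],\bbv[\Xnm,\dx])}_{\le\ \norminf{\dnq-\dx}} + \underbrace{\Winf\qty\big(\bbv[\Xnm,\dx],\bbv[\bX,\dx])}_{\le\ \haus{\Xnm,\bX}},
\]
arriving at \(2\norminf{\dnq-\dx}+\haus{\Xnm,\bX}\). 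You instead route through the single intermediate module \(\bbv[\Xnm,\dsf_{n-m}]\), invoking Theorem~\ref{theorem:momdist-stability} as a black box; this requires bounding \(\norminf{\dnq-\dsf_{n-m}}\le\norminf{\dnq-\dx}+\norminf{\dx-\dsf_{n-m}}\), so you pay the Hausdorff-type term once inside the first summand and once again in the second summand, ending at \(2\norminf{\dnq-\dx}+2\haus{\Xnm,\bX}\). The rate is unaffected, and you correctly flag that the constants do not match the exact form of \(\mathfrak{f}\); but because the theorem claims a specific \(\mathfrak{f}\) (not merely an order bound), the paper's sharper three-term split is the one that actually produces the stated constant. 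Beyond that, your pieces are the right ones: the \(\norminf{\dnq-\dx}\) concentration comes from the proof of Theorem~\ref{theorem:momdist-sublevel}, the Hausdorff concentration from the \((a,b)\)-standard bound, and a union bound with \(\delta_1+\delta_2=\delta\) closes the high-probability claim. For the expectation bound your parameter choices \(Q\asymp n^\epsilon\), \(\delta_1=e^{-(1+b)Q}\) are the right ones, though your description of the ``main delicate point'' is a little off target: the residual contribution in the tail integral is not controlled by any compactness of the distance functions, but simply by upper-bounding the tail probability by \(1\) on the short interval \(t<t(n,Q)\) excluded by the constraint \(\delta_1\le e^{-(1+b)Q}\) and observing that \(t(n,Q)\) is itself of order \((\log n/n^{1-\epsilon})^{1/b}\), exactly as in the proof of Theorem~\ref{theorem:momdist-sublevel}.
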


\begin{remark} We make the following observations from Theorem~\ref{theorem:momdist-consistency}.
        \begin{enumerate}[label=\textup{(\roman*)}, itemindent=*]
            \item The term appearing in the r.h.s.~of \cref{eq:dnq-rate-1} is identical to the term appearing in the r.h.s.~of \cref{eq:dnq-rate} in Theorem~\ref{theorem:momdist-sublevel}. Therefore, the $\dnq$--weighted filtration and the $\dnq$ sublevel filtration converge to the same population limit with identical convergence rates and match the minimax lower bound from \cref{thm:minimax} up to a $(\log{n})^{1/b}$ factor.
            \item The uniform confidence band we obtain from Theorem~\ref{theorem:momdist-consistency} can, in principle, be computed for any confidence level $\delta \in (0, 1)$. However, the restriction on $\delta_1$ makes the confidence band obtained using $V[\Xn, \dnq]$ wider than that obtained using Theorem~\ref{theorem:momdist-sublevel}. This is, ultimately, the price we have to pay for choosing the computationally tractable $\dnq$-weighted filtration as the estimator as opposed to the $\dnq$ sublevel filtration. 
        \end{enumerate}
\end{remark}

% \newpage
\begingroup

\section{Influence analysis}
\label{sec:influence}

The statistical analysis in the previous sections establishes that, even in the presence of outliers, as the number of samples increases we can eventually mitigate the effect of the outliers. In this section, we provide a more precise characterization for the influence the outliers have on the resulting $\dnq$--weighted filtrations, in contrast to the non-robust counterpart---the $\dn$--weighted filtrations.

Given a probability measure $\pr \in \mathcal{P}(\bX, a, b)$, \citet[Definition~4.1]{vishwanath2020robust} characterized the influence an outlier at $\xv \in \R^d$ has on a persistence diagram $\dgm\pa{\bbv[f_\pr]}$---obtained using the sublevel sets of $f_\pr$---using the \textit{persistence influence} function 
\eq{
    \boldsymbol{\Psi}(f_\pr; \xv) \defeq \lim_{\e \rightarrow 0} \winf\qty\Big( \dgm\pa{\bbv[{f_{\pr^{\e}_{\xv}}}]}, \dgm\pa{\bbv[f_\pr]} ),
    \label{eq:persinf}
}
where {$\pr^{\e}_{\xv} = (1-\e)\pr + \e\delta_{\xv}$} is the perturbation curve w.r.t.~$\xv$ in the space of probability measures. The persistence influence is a generalization of the influence function in robust statistics \citep{hampel2011robust} to general metric spaces. The analysis in this section is similar in spirit to the analysis based on the persistence influence but differs in two important aspects. First, the $\dnq$--weighted filtration is computed purely on the sample points---by partitioning the samples into $Q$ disjoint blocks---and, therefore, the notion of persistence influence is adapted to the samples, in contrast to \eref{eq:persinf}, which is based on the data-generating distribution $\pr$. Additionally, unlike the case of the persistence influence function---where the influence of outliers in the resulting persistence diagram is quantified in terms of the bottleneck distance---here we directly examine the influence the outlying point has on the resulting persistence diagram itself. This provides a more tractable interpretation of how outliers impact the resulting topological inference.

\begingroup
\renewcommand{\Xnm}{\bX[n+m]}

With this background, we now introduce the empirical persistence influence framework. Suppose we are given a collection of observations $\Xn$, which is sampled i.i.d.~from a probability distribution $\pr$ of interest. Let $\dgm\pa{\bVt[ ][\Xn, f_n][]}$ be its weighted--Rips persistence diagram, where the weight function $f_n$ is constructed using the samples $\Xn$. Suppose $\Xn$ is contaminated with $m < \f n2$ outliers to obtain the contaminated dataset $\Xnm$. In particular, we may assume that the $m$--points are placed at an outlying location $\xvo$, i.e.,
\eq{
    \Xnm = \Xn \bigcup \pb{\mathop{\medcup}\limits_{j=1}^m\pb{\xvo}},\label{eq:Xnm-influence}
}
such that the factor $m$ and the location $\xvo$ together control the relative influence the outliers have. This is similar to the role played by the factor $\e$ in the perturbation curve associated with the persistence influence. Note that when $m=0$, the influence of the outliers is non-existent in the dataset. 

\begin{remark}
    {Note that, unlike the results in the preceding sections where we assumed that $\Xn = \bX[n-m] \cup \bY[m]$ with $(n-m)$ points sampled from $\pr$, here, for ease of exposition, we assume that $\Xnm = \Xn \cup \Ym$ where $n$ points are sampled from the signal.}
\end{remark}

Let $\dgm\pa{\bVt[ ][\Xb, f_{n+m}]}$ be the $f_{n+m}$-weighted persistence diagram constructed on $\Xnm$. %

In a similar vein, we may characterize the influence the outliers have on the persistence diagrams resulting from the sublevel filtrations as
\eq{
    \boldsymbol{\widetilde\Psi}\pa{\winf; \Xn, f_n, m, \xvo} = \winf\qty\Big( \dgm\pa{\bbv[f_{m+n}]}, \dgm\pa{\bbv[f_n]} ) \le \norminf{f_{n+m} - f_{n}}.
    \label{eq:winf-influence}
}

Indeed, when $\winf\qty\big( \dgm\pa{\bbv[f_{m+n}]}, \dgm\pa{\bbv[f_n]} )$ is small, it follows that the persistence diagram $\dgm\pa{\bVt[ ][\Xnm, f_{n+m}]}$ is more robust (and vice versa). The following result establishes that, under some mild conditions and with high probability, the $\dnq$--weighted persistence diagrams are more robust than their non-robust counterpart.

\endgroup

% Influence Analysis
%%%%%%%%%%%%%%%%%%%%%%%%%%%%%%%%%%%%%%%%%%%%%%%%%%%%%%%%%%%%%%%%%%%%%%

\begin{theorem}[Influence analysis of $\dnq$-weighted filtrations]
    For $\Xn$ observed i.i.d. from $\pr \in \mathcal{P}(\bX, a, b)$ and $\xvo \in \R^d$, let $\Xmn$ be given by \cref{eq:Xnm-influence}. 
    For $2m < Q < n+m$, let $\dsf_{n+m}$ and $\dsf_{n+m, Q}$ denote the distance and MoM distance function w.r.t. $\Xmn$, %
    and let $\nQ = (n+m)/Q$ and $c = {\min\qty{a2^{-(1+2b)}, a2^{-3b}}}$. If
    
    {
    \eq{
        \vp \defeq c\ \!\dx(\xvo)^b > \f{\log\nQ}{\nQ} + \f{4(1+b)Q}{\nQ} , \tag{I}
    }
    }
    then, for all $\delta \in (0,1)$ satisfying
    \eq{
        (1+b)Q \le \log(2/\delta) \le \f{\nQ\vp - \log\nQ}{4}, \tag{II}
    }
    with probability greater than $1-\delta$,
    \eq{
        \norminf{ \dsf_{n+m} - \dsf_n } - \norminf{\dsf_{n+m, Q} - \dsf_n} \ge %
        \qty({ \f{2\log\nQ}{a\nQ} } + { \f{8 \log(2/\delta)}{a\nQ} })^{1/b}.\nn%
    }
    \label{theorem:momdist-influence}
\end{theorem}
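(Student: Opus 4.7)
The proof splits cleanly into the two claims of the theorem, and both reduce the influence quantities to pointwise shifts of the distance and MoM distance functions at $\xvo$. Unpacking \eref{eq:birth-influence} with $f_{n+m} = \dsf_{n+m}$ (resp.\ $\dsf_{n+m,Q}$) and $f_n = \dsf_n$ (resp.\ $\dsf_{n,Q}$), one obtains $\db = \dsf_n(\xvo)$, using that $\dsf_{n+m}(\xvo) = 0$, and $\dbq = \dsf_{n,Q}(\xvo) - \dsf_{n+m,Q}(\xvo)$.

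For the almost-sure inequality $\dbq \le \db$, the plan is to exploit the hypothesis $2m < Q$: at most $m < Q/2$ of the $Q$ block distances entering the median $\dsf_{n+m, Q}(\xvo)$ can be zero (exactly those blocks which contain a copy of $\xvo$), so the median is attained at a block whose points all come from the original $\Xn$, for which the block distance to $\xvo$ is at least $\dsf_n(\xvo)$. This yields $\dsf_{n+m,Q}(\xvo) \ge \dsf_n(\xvo)$. Combined with the monotonicity $\dsf_{n+m,Q}(\xvo) \le \dsf_{n,Q}(\xvo)$ under a compatible extension of the partition, and a careful comparison of the order statistic selected by the median in the two sorted lists of block distances, this gives $\dbq \le \db$.

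For the high-probability bound, I would first observe that $\norminf{\dsf_{n+m} - \dsf_n} = \dsf_n(\xvo)$: using $\dsf_{n+m}(\xv) = \min\{\dsf_n(\xv), \norm{\xv - \xvo}\}$ and the $1$-Lipschitzness of $\dsf_n$, the pointwise difference $\dsf_n(\xv) - \dsf_{n+m}(\xv) = (\dsf_n(\xv) - \norm{\xv - \xvo})_+$ is maximized at $\xv = \xvo$. The target inequality then reduces to upper-bounding $\norminf{\dsf_{n+m, Q} - \dsf_n}$ with high probability. I would anchor at $\dsf_\bX$ via the triangle inequality
\[
\norminf{\dsf_{n+m,Q} - \dsf_n} \le \norminf{\dsf_{n+m,Q} - \dsf_\bX} + \norminf{\dsf_n - \dsf_\bX},
\]
while $\dsf_n(\xvo) \ge \dsf_\bX(\xvo)$ follows from $\Xn \subset \bX$. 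The first term on the right is controlled by the MoM concentration in Lemma~\ref{lemma:mom} with sample size $n+m$ (the $\norminf{\cdot}$ step underlying Theorem~\ref{theorem:momdist-sublevel}), and the second by the classical Hausdorff concentration for the empirical distance function under the $(a,b)$-standard condition, which gives a bound of order $(\log n / (an))^{1/b}$.

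Finally, conditions (I) and (II) allow me to consolidate these two bounds into the clean closed form on the right-hand side of the theorem. Condition (I) ensures $\vp = c\,\dsf_\bX(\xvo)^b$ exceeds the leading concentration scale $\log\nQ/\nQ$ plus a $Q^3/\nQ$ correction, so that $\xvo$ is genuinely outlying relative to the effective block sample size $\nQ$. Condition (II) confines $\log(2/\delta)$ to a regime where the Chernoff bound behind Lemma~\ref{lemma:mom} applies (its lower bound $(1+b)^2 Q^2 \le \log(2/\delta)$) and where the confidence-dependent tails are dominated by the slack $\vp - \log\nQ/\nQ$ (its upper bound). Splitting the confidence budget evenly across the two triangle-inequality events and applying a union bound, the sum of the MoM and Hausdorff bounds is absorbed into $\dsf_\bX(\xvo)$ minus the claimed target rate. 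The main obstacle I anticipate is the bookkeeping to absorb the $1/(Q-2m)$ factor from Lemma~\ref{lemma:mom} into the cleaner $1/\nQ$ rate---which is exactly where the upper bound in (II) provides the necessary slack---and to trace the factors of $2$ and $8$ in the target back to the symmetric splitting of the confidence budget together with the geometric constants defining $c$.
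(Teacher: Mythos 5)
Your treatment of the second claim follows the paper's route: lower-bound $\norminf{\dsf_{n+m} - \dsf_n}$ by $\dsf_n(\xvo) \ge \dx(\xvo)$, upper-bound $\norminf{\dsf_{n+m,Q} - \dsf_n}$ via the triangle inequality through $\dx$ using the MoM concentration (Theorem~\ref{theorem:momdist-sublevel}) and the classical Hausdorff concentration, split the confidence budget evenly, and use conditions (I)--(II) to verify $\dx(\xvo) \ge 2\eta_\delta$ so the gap dominates the target rate. Your observation that $\norminf{\dsf_{n+m} - \dsf_n}$ equals $\dsf_n(\xvo)$ exactly (rather than merely being bounded below by it) is correct and slightly sharper than what the paper records, though the paper only needs the inequality.

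The gap is in your handling of the first claim, and it stems from your unpacking of \eref{eq:birth-influence}. You take $f_n = \dsf_{n,Q}$ on the clean-data side and arrive at $\dbq = \dsf_{n,Q}(\xvo) - \dsf_{n+m,Q}(\xvo)$. The paper keeps $f_n = \dsf_n$ fixed for both comparisons, so that $b_n(\{\xvo\}) = \dsf_n(\xvo)$ in both $\db$ and $\dbq$; the only thing that varies is the weight function on the contaminated data. With the paper's convention, $\dbq = \dsf_n(\xvo) - \dsf_{n+m,Q}(\xvo)$, and the inequality $\dbq \le \db = \dsf_n(\xvo)$ is immediate from $\dsf_n(\xvo) \le \dsf_{n+m,Q}(\xvo)$, which is exactly the block-median argument you sketch (this is the key inequality established in the proof of Theorem~\ref{theorem:momdist-stability}). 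Under your convention the claim is actually false: take, for instance, $Q=5$, $m=2$, and suppose the five block distances from $\xvo$ to the $\Xn$-blocks are $1, 10, 20, 30, 40$ (so $\dsf_{n,Q}(\xvo)=20$ and $\dsf_n(\xvo)=1$) while the two outlier copies land in the blocks with distances $10$ and $20$; then the $\Xnm$-block distances become $1,0,0,30,40$, so $\dsf_{n+m,Q}(\xvo)=1$ and your $\dbq = 19 > 1 = \db$. The difficulty is precisely that $\dsf_{n,Q}(\xvo)$ can be arbitrarily larger than $\dsf_n(\xvo)$ when the points nearest $\xvo$ are concentrated in few blocks, so no "careful comparison of the order statistics" will salvage the bound $\dsf_{n,Q}(\xvo) - \dsf_{n+m,Q}(\xvo) \le \dsf_n(\xvo)$ in general. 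The fix is simply to adopt the paper's convention that the baseline birth time in \eref{eq:birth-influence} is computed from the plain distance function $\dsf_n$ in both scenarios; then the first claim follows in one line from the stability-proof inequality you already identified.
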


\begin{remark} The result from Theorem~\ref{theorem:momdist-influence} may be interpreted as follows.
    \begin{enumerate}[label=\textup{{(\roman*)}}, itemindent=*]

        \item When conditions (I) and (II) hold, then with high probability, persistence diagrams obtained using $\dnq$ are closer to the truth than those obtained using $\dsf_n$. Therefore, the interplay between $n$, $m$, and $\xvo$ is better understood by characterizing when conditions (I) and (II) hold. 
        
        \item For fixed $n$ observe that (I) is satisfied whenever $\dx(\xvo)$ is sufficiently large, i.e., $\xvo$ is sufficiently far away from the support. On the other hand, if $\xvo$ is fixed, then (I) is satisfied when $\log \nQ / \nQ$ is sufficiently small, i.e., $n$ is sufficiently large. Together, this implies that for condition (I) to be satisfied, either (a) we need the outliers to be sufficiently well-separated from the support $\Xb$ such that we are able to distinguish outliers $\xvo$ from the inliers $\Xn$, or (b) for outliers placed very close to the support $\Xb$ we need sufficiently many inliers $n$ for us to be able to distinguish them from the outliers. On the other hand, note that if $n$ and $m$ are fixed, then the r.h.s. of (I) is directly proportional to $Q$. Although $Q$ can take any values between $2m < Q < (n+m)$, choosing a value of $Q$ much larger than $2m+1$ will likely breach condition (I) for a fixed $\xvo$. Equivalently, for a suboptimal choice of $Q$, we need the outliers to be sufficiently far away from the inliers in order to be able to distinguish them. 

        \item The l.h.s. of (II) is equivalent to the constraint that $\delta \le e^{-(1+b)Q}$, which appears in Theorems~\ref{theorem:momdist-sublevel}~and~\ref{theorem:momdist-consistency}. The r.h.s. of (II) specifies a lower bound on the confidence level $\delta$. Condition (I) guarantees that the admissible values of $\delta \in (0,1)$ satisfying (II) is nonempty. For fixed $m, Q$ and $\xvo$, the r.h.s. of (II) is directly proportional to $n$, i.e., the lower bound vanishes as $n \rightarrow \infty$. 
        
        \item When conditions (I) and (II) are satisfied, we have the following lower bound from the l.h.s. of (II):
        \eq{
            \norminf{ \dsf_{n+m} - \dsf_n } - \norminf{\dsf_{n+m, Q} - \dsf_n} \gtrsim \qty({ \f{\log(n+m/Q)}{a(n+m)/Q}} + { \f{Q}{(n+m)/Q} })^{1/b}.\label{eq:inf-lb}
        }
        In the regime when $n,m \rightarrow \infty$, and for the optimal choice of $Q$, i.e., $Q=km$ for $k > 2$, the r.h.s. of \cref{eq:inf-lb} is non-trivial when $m = \Omega(n^{1/2})$. Therefore, under conditions (I) and (II), when there are sufficiently many outliers, there is greater evidence to support the robustness of $\dnq$. 
    \end{enumerate}
\end{remark}

\endgroup

%%%%%%%%%%%%%%%%%%%%%%%%%%%%%%%%%%%%%%%%%%%%%%%%%%%%%%%%%%%%%%%%%%%%%%

\begingroup
\providecommand{\hQ}{\widehat{Q}}
\providecommand{\hm}{\widehat{m}}
\renewcommand{\ms}{m^*}
\providecommand{\h}{\mathfrak{h}}

\section{Auto-tuning the parameter $Q$}
\label{sec:lepski}

The result in Theorem~\ref{theorem:momdist-consistency} relies on the crucial assumption that the number of outliers $\ms$ is known \textit{a priori}. While this assumption may hold in certain adversarial settings, in general, this information may be unavailable. In order to make Theorem~\ref{theorem:momdist-consistency} more useful in practical settings, we discuss two solutions for calibrating the parameter $Q$. The first procedure is based on Lepski's method \citep{lepskii1991problem}, which is a powerful data-driven method for adaptive parameter selection. In this case, we also provide theoretical guarantees for the adaptively tuned estimator. The second procedure---which is based on some heuristic observations regarding the sample estimator $\bbv{}[\Xn, \dnq]$--- works well in practice, and may be used as a precursor to Lepski's method.  

When the number of outliers $\ms$ is known, choosing $Q^*=2\ms+1$ results in the rate of convergence in Theorem~\ref{theorem:momdist-consistency}. However, without access to $m^*$, Lepski's method provides a systematic procedure for selecting a parameter $\hQ$ which provides the same error guarantees as $Q^*$ \citep{birge2001alternative}. The procedure is as follows. Let $m_{\min}$ and $m_{\max}$ be two coarse bounds on (unknown) $\ms$ such that ${m_{\min} \le \ms \le m_{\max}}$. For a choice of  $\theta>1$, let $m(j) = \theta^{j}\mmin$ and define 
$$
\J \defeq \qty\Big{ j \ge 1 : \mmin \le m(j) < \theta\mmax }.
$$ 
For $\Xn$ obtained under sampling condition \ref{setting}, let $\bbv_n(j) = \bbv[\Xn, \dsf_{n, Q(j)}]$ be the persistence module obtained using the \md{-weighted} filtration with ${Q(j) = 2m(j)+1}$. 

For $\delta \in (0,1)$ and $\delta_{\max} = \delta - e^{-(1+b)(2\mmax+1)}$, let $\mathfrak{h}(n, m, \delta)$ be defined as follows:
\eq{
    % \mathfrak{h}(n, m(j), \delta) = \qty\bigg( \frac{3m(j)\log(n / m(j))}{an} + \frac{24(1+b)\mmin}{an} )^{1/b} + \qty\bigg( \frac{\log (n-m(j))}{a (n-m(j))} + \frac{4 \log(1/\dmax)}{a (n-m(j))} )^{1/b}.\nn
    \mathfrak{h}(n, m, \delta) &= {4}\qty\Bigg( \f{2m+1}{an} \wo\qty( \f{ne^{ 4(1+b)(2\mmax+1) }}{2m+1} ) )^{1/b}\\ 
    &\qquad+ {2}\qty\Bigg( \f{1}{a(n-m)} \wo\qty( (n-m)e^{4\log(1/\delta_{\max})} ) )^{1/b},\nn
    % \label{eq:h-def}
}
where for $z>0$, $\wo(z)$ is the Lambert $\wo$ function given by the identity $\wo(z)e^{\wo(z)} = z$. With this background, let $\hj$ be the output of the following procedure:
\eq{
    \hj \defeq \min \qty\Big{ j \in \J : \winf\pa{ \bbv_n(j), \bbv_n({j'}) } \le 2\mathfrak{h}(n, m(j'), \delta) \qq{for all} j' \in \J, j' > j },
    \label{eq:lepski-j}
}
the resulting weighted persistence module $\widehat{\bbv}_n = \bbv_n({\hj}) = \bbv[\Xn, \dsf_{n, Q(\hj)}]$ is the Lepski estimator for $\bbv[\bX]$. The following result establishes that the adaptive selection of $Q$ results in an estimator with the same convergence guarantees as in Theorem~\ref{theorem:momdist-consistency}.

\begin{theorem}[Adaptive $\dnq$-weighted filtration]
    Suppose $\Xn$ is obtained under sampling condition \ref{setting} for $\pr \in \mathcal{P}(\bX, a, b)$, and suppose $\mmin$ and $\mmax$ are known such that unknown number of outliers, $\ms$, such that $0 \le \mmin \le \ms \le \mmax < n/2$.  For a chosen $\theta > 1$ let $\hj$ be the output of data-driven procedure in \cref{eq:lepski-j} and let $\widehat{\bbv}_n = \bbv_n{(\hj)}$. Then, for all $\delta \in (0, 1)$,
    \eq{
        \pr\qty\bigg( \winf\qty\Big( \dgm\qty\big(\widehat{\bbv}_n), \dgm\qty\big(\bbv[\bX]) ) \le 3\mathfrak{h}(n, \theta m^*, \delta) ) \ge 1 - \delta \log_\theta\qty( \f{\theta \mmin}{\mmax} ).\nn
    }
    \label{theorem:lepski}
\end{theorem}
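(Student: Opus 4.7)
The proof will follow the standard adaptive-selection paradigm of \citet{lepskii1991problem}. I identify a deterministic \emph{oracle} index $j^{\star} \in \J$, show that on a high-probability event the data-driven choice satisfies $\hj \le j^{\star}$, and then close by a triangle inequality linking $\widehat{\bbv}_n$, $\bbv_n(j^{\star})$ and $\bbv[\bX]$. Because $m(j)=\theta^{j}\mmin$ grows geometrically and the unknown number of outliers satisfies $\mmin \le \ms \le \mmax$, the natural choice is $j^{\star} := \min\pb{j \in \J : m(j) \ge \ms}$, which obeys $\ms \le m(j^{\star}) \le \theta\ms$ by geometric spacing.

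\textbf{Step 1 (good event).} For every $j \in \J$ with $j \ge j^{\star}$ one has $Q(j)=2m(j)+1 > 2\ms$, so Theorem~\ref{theorem:momdist-consistency} applies to $\bbv_n(j)$. I split the confidence budget uniformly over $\J$: take $\delta_1$ at the boundary of admissibility (saturating $\delta_1 \le e^{-(1+b)Q(j)}$ at the largest $Q(j)$ appearing in $\J$, namely $2\mmax+1$) and set $\delta_2=\delta_{\max}$. The two summands of $\mathfrak{f}(n,m(j),Q(j),a,b)$, which simplify thanks to $Q(j)-2m(j)=1$, then repackage into $\h(n,m(j),\delta)$ via the Lambert identity $\wo(z)e^{\wo(z)}=z$---this is precisely the object encoding the simultaneous optimisation. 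A union bound over the $|\J|=O(\log_\theta(\mmax/\mmin))$ admissible indices yields a good event $\Omega_0$ with $\pr(\Omega_0) \ge 1 - |\J|\,\delta$, on which $\winf\pa{\dgm(\bbv_n(j)),\,\dgm(\bbv[\bX])} \le \h(n,m(j),\delta)$ simultaneously for every $j \in \J$ with $j \ge j^{\star}$.

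\textbf{Steps 2 and 3 (Lepski argument and conclusion).} Because $z \mapsto \wo(z)/z$ is decreasing on $(0,\infty)$, both summands of $\h(n,\cdot,\delta)$ are non-decreasing in $m$ (applied to $u=2m+1$ in the first term and $v=n-m$ in the second). Hence on $\Omega_0$, for every $j' \in \J$ with $j' > j^{\star}$, a triangle inequality gives $\winf\pa{\dgm(\bbv_n(j^{\star})),\,\dgm(\bbv_n(j'))} \le \h(n,m(j^{\star}),\delta)+\h(n,m(j'),\delta) \le 2\,\h(n,m(j'),\delta)$, which is exactly the feasibility condition for $j^{\star}$ in the definition of $\hj$ at \eref{eq:lepski-j}. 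Minimality of $\hj$ therefore forces $\hj \le j^{\star}$. Applying the defining property of $\hj$ with $(j,j')=(\hj,j^{\star})$ (the case $\hj=j^{\star}$ being trivial) gives $\winf\pa{\dgm(\bbv_n(\hj)),\,\dgm(\bbv_n(j^{\star}))} \le 2\,\h(n,m(j^{\star}),\delta)$. Adding the Step~1 bound at $j^{\star}$ and using $m(j^{\star}) \le \theta\ms$ with $m$-monotonicity of $\h$ yields $\winf\pa{\dgm(\widehat{\bbv}_n),\,\dgm(\bbv[\bX])} \le 3\,\h(n,m(j^{\star}),\delta) \le 3\,\h(n,\theta\ms,\delta)$ on $\Omega_0$.

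\textbf{Main obstacle.} The most delicate piece is Step~1: calibrating the split $(\delta_1,\delta_2)$ \emph{uniformly} across $\J$ so that the $\mathfrak{f}$-bound of Theorem~\ref{theorem:momdist-consistency} repackages cleanly as $\h$ while simultaneously respecting the admissibility constraint $\delta_1 \le e^{-(1+b)Q(j)}$ at the largest $Q(j)$ in $\J$. The Lambert $\wo$ in the definition of $\h$ is exactly what encodes this simultaneous optimisation, and making it tight---so that no excess slack accumulates across the $O(\log_\theta(\mmax/\mmin))$ indices handled by the union bound---is the most technically involved part. A secondary care is the $m$-monotonicity of $\h$ used in Steps~2--3, which requires handling the $m$-dependence inside the arguments of each of the two Lambert terms; this is dispatched by the elementary fact that $z \mapsto \wo(z)/z$ is decreasing on $(0,\infty)$.
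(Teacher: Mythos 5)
Your proposal is correct and follows essentially the same route as the paper's proof: you define the oracle index $j^\star$ with $m(j^\star) \le \theta\ms$, establish monotonicity of $\h(n,\cdot,\delta)$ in $m$ (via $z\mapsto \wo(z)/z$ being decreasing, which is what the paper shows by differentiating $z\mapsto \alpha\wo(\beta z)/z$), union-bound over $\J$ to get a good event on which $\winf(\bbv_n(j),\bbv[\bX]) \le \h(n,m(j),\delta)$ simultaneously for $j\ge j^\star$, show this forces $\hj\le j^\star$ by feasibility, and close with a triangle inequality through $\bbv_n(j^\star)$. The only cosmetic difference is that the paper takes $\js=\min\{j:m(j)>\ms\}$ and unions only over $\{j\ge\js\}$, whereas you take $j^\star=\min\{j:m(j)\ge\ms\}$ and union over all of $\J$; both choices are fine and yield the same order in the final bound.
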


\begin{remark}
    We make the following useful observations from Theorem~\ref{theorem:lepski}.
    \begin{enumerate}[label=\textup{{(\roman*)}}, itemindent=*]
        \item We make the distinction that the output $\widehat\bbv_n$ of Lepski's method does not necessarily correspond to the optimal choice $\bbv_n^*$ if $\ms$ were known. Instead, Theorem~\ref{theorem:lepski} guarantees that the error associated with $\widehat\bbv_n$ is of the same order (up to constants) as that of $\bbv_n^*$. 

        \item While Lepski's method guarantees optimal errors for the adaptive estimator without any knowledge of the true $m^*$; in practice, however, the empirical performance depends on several factors. Since the procedure in Theorem~\ref{theorem:lepski} is designed to match the guarantee of Theorem~\ref{theorem:momdist-consistency}, the success of the procedure crucially depends on the tightness of the bound $\mathfrak{f}(n, m, Q, \delta_1, \delta_2)$ in Theorem~\ref{theorem:momdist-consistency}. Furthermore, the implementation described in \cref{eq:lepski-j} requires knowledge of the parameters $a, b > 0$ arising from the $(a,b)-$standard condition. While the calibration of $a$ and $b$ in practice is more of an art and beyond the scope of the paper, we emphasize here that it is possible to construct a statistically consistent estimator of the true population quantity $\bbv[\bX]$ in a purely data-adaptive fashion, even in the presence of adversarial contamination. 
        
        \item Unlike a standard grid search, Lepski's method adapts to the true noise level $m^*$ in an efficient manner. Given a reasonable estimate for $\mmin$ and $\mmax$, Lepski's method has a computational cost of $O( \log^2_\theta(\mmax/\mmin ) )$. However, the choice of $\theta > 1$ must also be made judiciously, e.g., replacing $\theta$ with $\sqrt{\theta}$ for the procedure in \cref{eq:lepski-j} will require $\sim4$ times more computational time.

        \item In the worst case, when there are no reasonable estimates for $\mmin$ and $\mmax$, choosing $\mmin=1$ and $\mmax=n/2$ requires $O(\log^2_\theta(n))$ computational time. Notably, more than just the additional computational price, a suboptimal choice of $\mmin$ and $\mmax$ leads to poor performance. To see this, note that the term $\h(n, m, \delta)$ is a lower bound for the term $\mathfrak{f}(n, m, Q, \delta_1, \delta_2)$ in Theorem~\ref{theorem:momdist-consistency} when $Q=2m+1$ and $\delta_1 = e^{-(1+b)(2\mmax+1)} \le e^{-(1+b)Q}$. Therefore, when the number of outliers grows with $n$ as $m^* = cn^\e$ for $c>0$ and $\e \in [0, 1)$, a similar analysis to that in Theorem~\ref{theorem:momdist-sublevel} and Theorem~\ref{theorem:momdist-consistency} yields that
        \eq{
        \E\qty\bigg[ \winf\qty\Big( \dgm\qty\big(\widehat{\bbv}_n), \dgm\qty\big(\bbv[\bX]) ) ] \lesssim \pa{\f{\log n}{n/\mmax}}^{1/b}.\nn
        }
        Therefore, if the bound $\mmax$ is not tight, i.e., $\mmax = Cn^\beta$ for $\epsilon < \beta$, then, asymptotically, the output of Lepski's method is not adaptive to the true noise $m^*$, and, instead, reflects the suboptimal choice of $\mmax$.
    \end{enumerate}
    \label{remark:lepski}
\end{remark}

This method may also be used to adaptively select the parameter $Q$ for the sublevel set persistence module. The following result outlines a data-driven procedure to obtain ${\cj \in \J}$ such that the resulting sublevel persistence module $\overline{\bbv}_n = \bbv_n(\cj) = \bbv[\dsf_{n, Q(\cj)}]$ has the same convergence guarantee as Theorem~\ref{theorem:momdist-sublevel}.

\begin{corollary}[Adaptive sublevel filtration]
    For $\pr \in \mathcal{P}(\bX, a, b)$, suppose $\Xn$ is obtained under sampling condition \ref{setting}, and suppose $\mmin$ and $\mmax$ are known such that unknown number of outliers, $\ms$, is such that $0 \le \mmin \le \ms \le \mmax < n/2$. Let $\bbw_n(j) = \bbv[\dsf_{n, Q(j)}]$ be the sublevel persistence module obtained using $\dsf_{n, Q(j)}$ with ${Q(j) = 2m(j)+1}$ for all $j \in \J$. For a chosen $\theta > 1$, let $\cj$ be the output of data-driven procedure,
    \eq{
        \cj = \min \qty\Big{ j \in \J : \winf\pa{ \bbv_n(j), \bbv_n({j'}) } \le 2\mathfrak{p}(n, m(j'), \delta) \qq{for all} j' \in \J, j' > j },\nn
    }
    where
    \eq{
        \mathfrak{p}(n, m, \delta) = \qty\Bigg( \f{2m+1}{an} \wo\qty( \f{ne^{ (1+b)\log(1/\delta) }}{2m+1} ) )^{1/b}.\nn
    } 
    Then, for all $\delta \le e^{-(1+b)(2\mmax+1)}$ and $\overline{\bbv}_n = \bbv_n{(\cj)}$,
    \eq{
        \pr\qty\bigg{ \winf\qty\Big( \dgm\qty\big(\overline{\bbv}_n), \dgm\qty\big(\bbv[\bX]) ) \le 3\mathfrak{h}(n, \theta m^*, \delta) } \ge 1 - \delta \log_\theta\qty( \f{\theta \mmin}{\mmax} ).\nn
    }
    \label{corollary:lepski}
\end{corollary}

\endgroup

The proof is identical to that of Theorem~\ref{theorem:lepski}, and is, therefore, omitted. The success of Lepski's method depends on the tightness of the probabilistic bounds, knowledge of the (nuisance) parameters (i.e. $a,b$) appearing in these bounds, and a prudent choice for $\mmin$ and $\mmax$. While the calibration of $a$ is beyond the scope of this paper, in $\R^d$ a conservative choice for $b$ would be the dimension $d$ of the ambient space. We refer the reader to \citet[Section~4]{chazal2015convergence} for further details. 

To address the last bottleneck in Lepski's method, we describe a heuristic method to select the parameter $Q$, which may be used to obtain reasonable choices for $\mmin$ and $\mmax$. %

The method is based on the observation that the blocks $\pb{S_q : q \in [Q]}$ may be resampled by shuffling the sample points $\Xn$ prior to partitioning it. The resulting estimator $\bbv[\Xn, \dnq]$ is an unbiased estimator of the same population quantity when ${2m < Q < n}$. Therefore, we may choose the smallest value of $Q$ for which the pairwise bottleneck distance over permutations of the data is minimized. Specifically, suppose $\Xn^\sigma = \pb{ \Xv_{\s(1)}, \Xv_{\s(2)}, \dots, \Xv_{\s(n)} }$ is a permutation of  $\Xn$, then
\eq{
    \widehat{Q}_{R} = \argmin_{Q \ge 1} \sum_{\mathclap{1 \le i < j \le N}} \winf\qty\Big({ \bbv[{\Xn^{\sigma_i}}, \dnq], \bbv[{ \Xn^{\sigma_j} }, \dnq] }),\nn
}
where, for a chosen number of replicates $N$, $\sigma_i, \sigma_j$ are permutations of $[n]$ for each $i, j \in [N]$. Furthermore, for $\widehat{m}_{R} = \lfloor{\widehat{Q}_{R}/2}\rfloor$ and for a constant $C > 1$, the bounds $\mmin$ and $\mmax$ may be taken to be $C\inv\widehat{m}_{R}$ and $C\widehat{m}_{R}$, respectively.

% Results
%%%%%%%%%%%%%%%%%%%%%%%%%%%%%%%%%%%%%%%%

%%%%%%%%%%%%%%%%%%%%%%%%%%%%%%%%%%%%%%%%
% Experiments

\section{Experiments}
\label{sec:experiments}

In the following section, we supplement the theory through the illustration of the performance of the robust filtrations $\bbv[\dnq]$ and $\bbv[\Xn, \dnq]$ in synthetic experiments. 

\subsection{Adaptive calibration of $Q$}
\label{exp:adaptive}

For $n=500$, $K=30$ replicates and for each $i \in [K]$, point clouds $\Xn^{(i)}$ are generated on a circle, and $m^{(i)} \sim \text{Unif}\qty(50, 150)$ outliers added from a Matérn cluster process. This is illustrated in Figure~\ref{fig:lepski}\,(a). Taking $m_{\min} = 20$, $m_{\max}=200$ and $\theta=1.07$, the adaptive estimate $\widehat{m}^{(i)}$ is computed using Lepski's method, and $\widehat{m}_R^{(i)}$ is computed using the heuristic method described in Section~\ref{sec:lepski} with $N=50$. For a single replicate ${i \in [K]}$, Figure~\ref{fig:lepski}\,(b) plots $\sum_{1 \le i < j \le N} \winf\qty\big({ \bbv[{\Xn^{\sigma_i}}, \dnq], \bbv[{ \Xn^{\sigma_j} }, \dnq] })$ vs. $Q$. In most cases, we have observed that the resampled bottleneck distance criterion stabilizes shortly before the optimal value of $m$. Figure~\ref{fig:lepski}\,(c) shows a boxplot for the relative errors $\qty\big{ \widehat{m}^{(i)} - {m}^{(i)} / {m}^{(i)} : i \in [K] }$ and $\qty\big{ \widehat{m}_R^{(i)} - {m}^{(i)} / {m}^{(i)} : i \in [K] }$ for Lepski's method and the heuristic procedure, respectively. Lepski's method is fairly robust to the choice of the hyperparameters, and, consistently selects $\widehat m^{(i)} \ge m^{(i)}$. In contrast, since the resampled bottleneck distance from the heuristic procedure often stabilizes before $m^{(i)}$, we observe that $\widehat{m}^{(i)}_R < m^{(i)}$.

\begin{figure}[t]
    \begin{subfigure}[b]{0.32\textwidth}
        \includegraphics[height=\textwidth]{./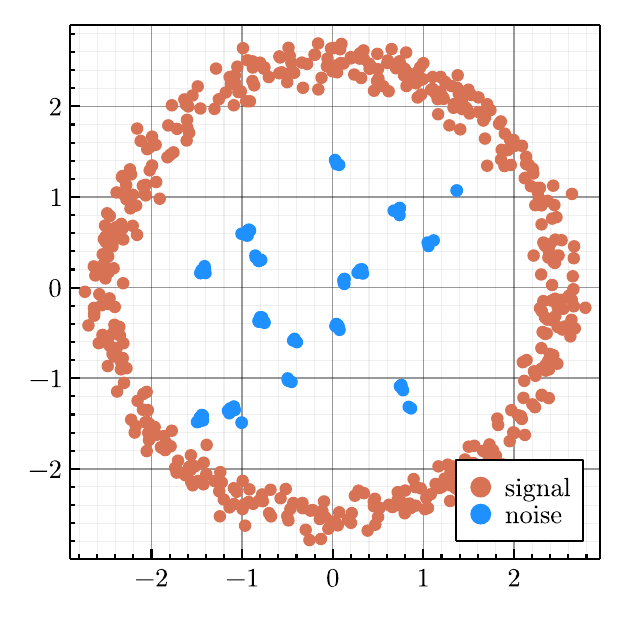}
        \caption{Scatterplot of $\Xn$}
    \end{subfigure}
    \begin{subfigure}[b]{0.32\textwidth}
        \includegraphics[height=\textwidth]{./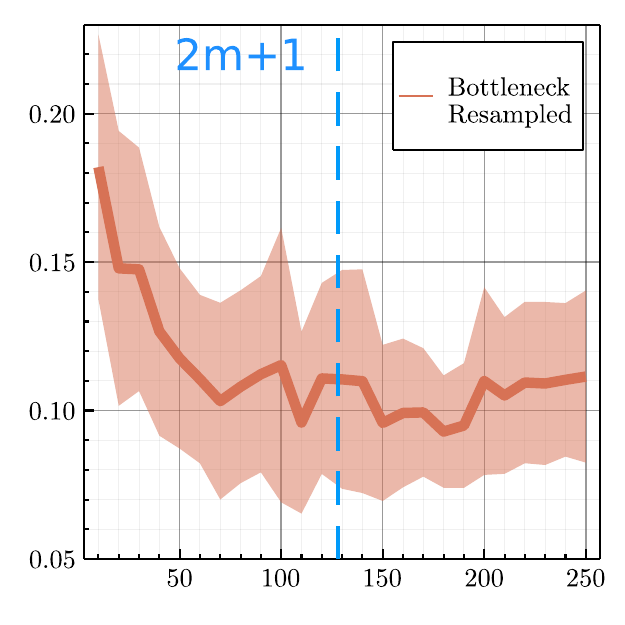}
    \caption{Heuristic procedure}
    \end{subfigure}
    \begin{subfigure}[b]{0.3\textwidth}
        \includegraphics[height=1.05\textwidth]{./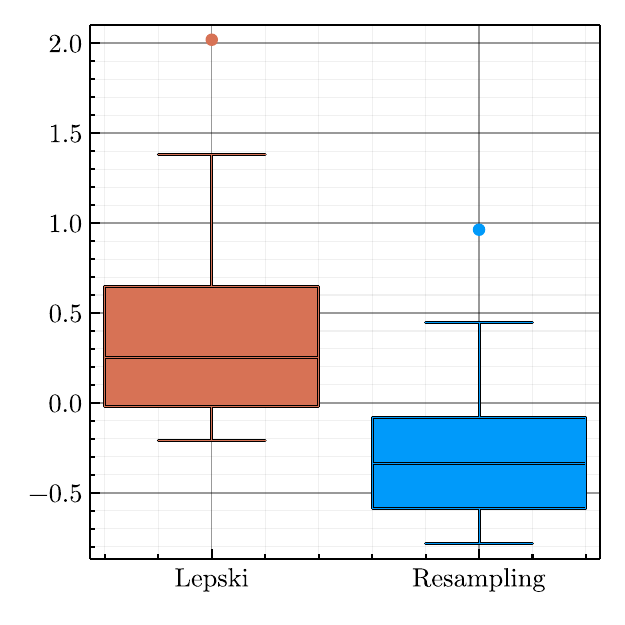}
        \caption{Relative error: Lepski vs. Resampling}
    \end{subfigure}
    \caption{Comparison of Lepski's method and the heuristic procedure for selecting the parameter $Q$.}
    \label{fig:lepski}
\end{figure}

\subsection{High dimensional topological inference}
\label{exp:highdim}

In this experiment, we illustrate the advantage of using $\dnq$-weighted filtrations for high dimensional topological inference. Points are uniformly sampled in $\R^3$ from two interlocked circles. Using a random rotation matrix $Q \in {SO}(100)$, the points are transformed to an arbitrary configuration in $\R^{100}$. The samples $\Xn \subset \R^{100}$ are obtained by replacing $12.5\%$ of the points in $\R^{100}$ with outliers sampled from $\textup{Uniform}\pa{ [-0.2, 0.2]^{100} }$. A scatterplot for $\Xn$ projected to $3$ arbitrary coordinates is shown in Figure~\ref{fig:highdim}\,(a). Since the point cloud is embedded in $\R^{100}$, computing sublevel filtrations using cubical homology with the same resolution as earlier requires $(10/0.5)^{100} \approx 10^{131}$ simplices to be stored in memory. In contrast, computing the $\dnq$-weighted filtrations %requires 
is less intensive. Figure~\ref{fig:highdim}\,(b) shows the persistence diagram $\dgm(\widehat{\bbv}_n)$ obtained using $\dnq$-weighted filtrations, where the parameter $Q$ is adaptively selected using Lepski's method. The two $1$st order homological features underlying the interlocked circles are recovered. Figure~\ref{fig:highdim}\,(c) illustrates the persistence diagram $\dgm(\bbv [{\Xn, \delta_{n,k}}])$ obtained using DTM-weighted filtrations where the parameter $k=10$ was selected manually. 

\begin{figure}[H]
    % \begin{subfigure}[b]{0.23\textwidth}
    %     \includegraphics[height=\textwidth]{./figures/experiments/highdim/interlocked-scatter.pdf}
    % \end{subfigure}
    \begin{subfigure}[b]{0.34\textwidth}
        \centering
        \includegraphics[width=\textwidth]{./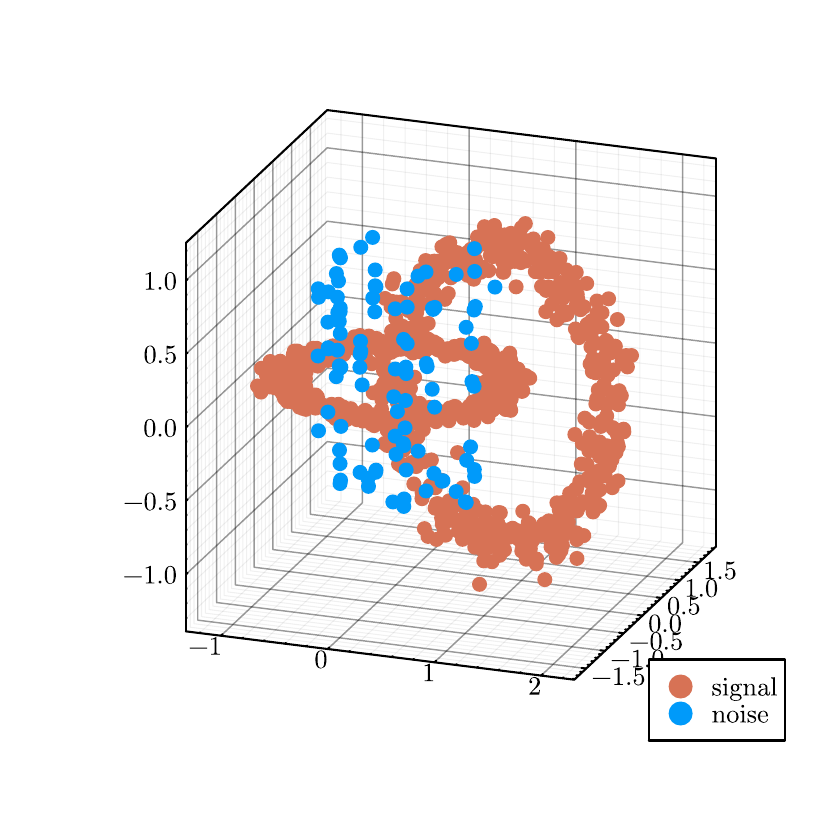}
        \caption{{$\Xn$ before transformation}}
    \end{subfigure}
    \begin{subfigure}[b]{0.31\textwidth}
        \centering
        \includegraphics[height=\textwidth]{./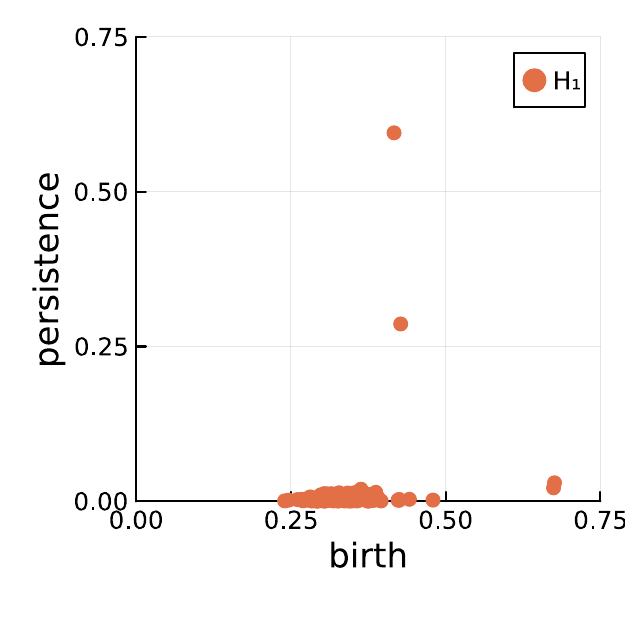}
        \caption{MOM; $\dgm({\widehat{\bbv}_n})$}
    \end{subfigure}
    \begin{subfigure}[b]{0.31\textwidth}
        \centering
        \includegraphics[height=\textwidth]{./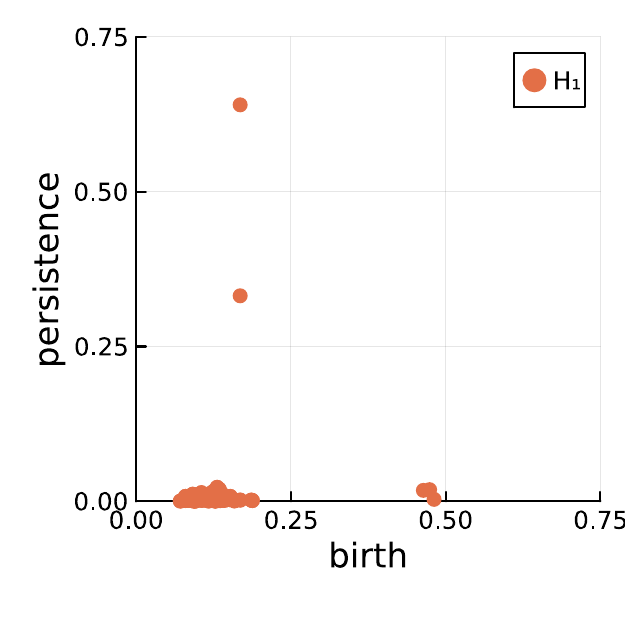}
        \caption{{DTM}; $\dgm\pa{\bbv[\Xn, \delta_{n, k}]}$}
    \end{subfigure}
    \caption{Robust persistence diagrams for interlocked circles in $\R^{100}$ using $\dnq$ and $\delta_{n, k}$ weighted filtrations.}
    \label{fig:highdim}
\end{figure}

\subsection{Recovering the true signal under adversarial contamination}
\label{exp:mnist}

In this experiment, we illustrate how $\bbv[{\Xn, \dnq}]$ can be used to recover the true topological features in the presence of adversarial contamination. In Figure~\ref{fig:mnist}\,(a), we consider a $28 \times 28$ image for the digit ``6'' from the MNIST database \citep{deng2012mnist}. We consider the setting in which an adversary is allowed to manipulate $10\%$ of the image by modifying the pixel intensities. Figure~\ref{fig:mnist}\,(b) depicts the adversarially contaminated version of the image by transforming the ``6'' to an ``8''.

For each pixel $p$ with pixel intensity $\iota(p)$, we convert the image to a point cloud $\Xn \subset \R^2$ by sampling $10 * \iota(p)$ points uniformly from the region enclosed by the pixel. \mbox{Figures~\ref{fig:mnist}(d, e)} illustrate the point clouds obtained from the true and contaminated images with $n-m \approx 1100$ and $n \approx 1300$, respectively. The persistence diagrams constructed using the distance function $\dsf_n$ for the two point clouds are reported in Figures~\ref{fig:mnist}\,(g, h). The persistence diagram in Figure~\ref{fig:mnist}\,(h) indicates the presence of the additional loop introduced by the adversary. To account for the adversarial contamination, we compute the \md{} function $\dnq$ with the parameter $Q$ selected using the contamination budget, i.e., $Q = 1 + 2(1100 \times 10\%) = 221$. Figure~\ref{fig:mnist}\,(f) shows the adversarially contaminated point cloud with each point $\xv_i \in \Xn$ colored by the value of $\dnq(\xv_i)$. The resulting $\dnq$-weighted persistence diagram $\dgm(\bbv[\Xn, \dnq])$ is reported in Figure~\ref{fig:mnist}\,(f). We note that $\dgm(\bbv[\Xn, \dnq])$ recovers the prominent features of Figure~\ref{fig:mnist}\,(g) up to a rescaling. 

Additionally, for each pixel $p$ we compute a rescaled version of $\dnq$, given by
\eq{
    f_{n, Q}(p) = \f{\max\limits_x\dnq(x) - \dnq(p)}{\max\limits_x\dnq(x)},\nn
}
as a proxy for the pixel intensity obtained using $\dnq$. In Figure~\ref{fig:mnist}\,(c), we plot the level sets $\pb{p: f_{n,Q} = t}$ on the original image for $t \ge 0.8$.

\begingroup

\renewcommand{\Xnm}{\mathbb{X}_{n\minus m}}

\begin{figure}%[ht!]
    \begin{subfigure}[c]{0.3\textwidth}
        \includegraphics[width=\textwidth]{./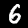}
        \caption{Signal}
    \end{subfigure}
    \quad
    \begin{subfigure}[c]{0.3\textwidth}
        \includegraphics[width=\textwidth]{./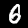}
        \caption{Adversarial contamination}
    \end{subfigure}
    \quad
    \begin{subfigure}[c]{0.305\textwidth}
        \includegraphics[height=\textwidth]{./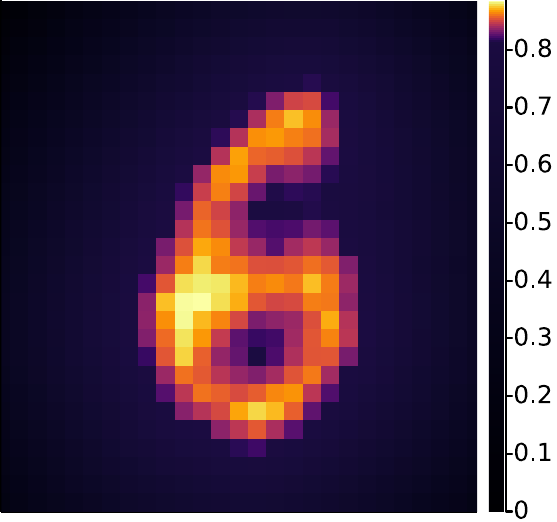}
        \caption{Recovered}
    \end{subfigure}
    \medskip
    \begin{subfigure}[c]{0.315\textwidth}
        \includegraphics[height=\textwidth]{./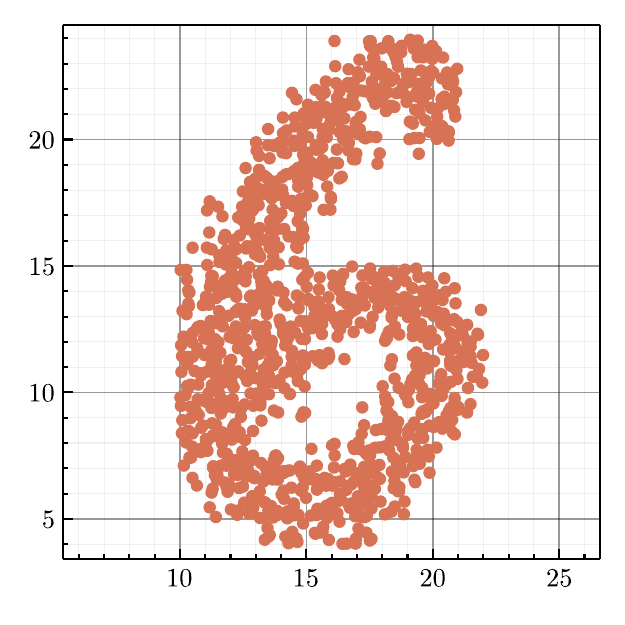}
        \caption{$\Xnm$}
    \end{subfigure}
    \begin{subfigure}[c]{0.315\textwidth}
        \includegraphics[height=\textwidth]{./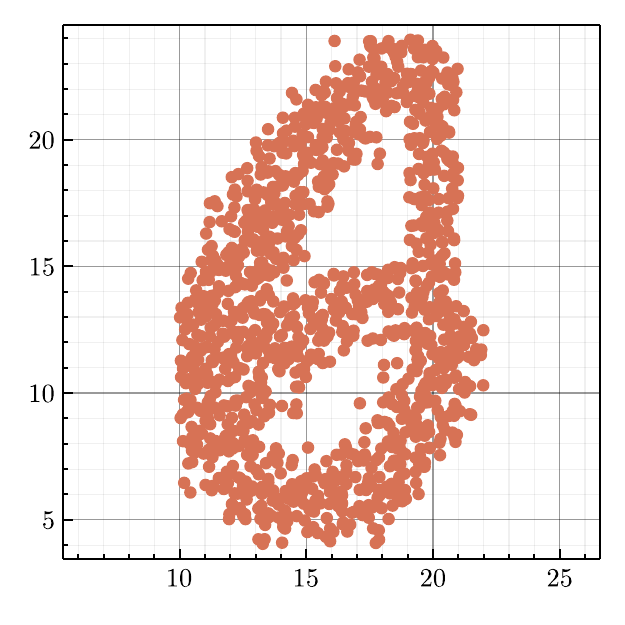}
        \caption{$\Xn$}
    \end{subfigure}
    \quad
    \begin{subfigure}[c]{0.315\textwidth}
        \includegraphics[height=\textwidth]{./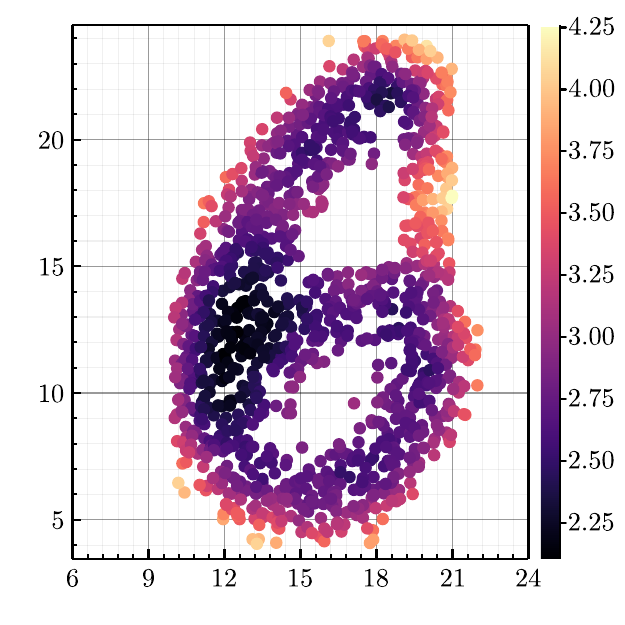}
        \caption{$\Xn$ colored by $\dnq$}
    \end{subfigure}
    \begin{subfigure}[c]{0.32\textwidth}
        \includegraphics[height=\textwidth]{./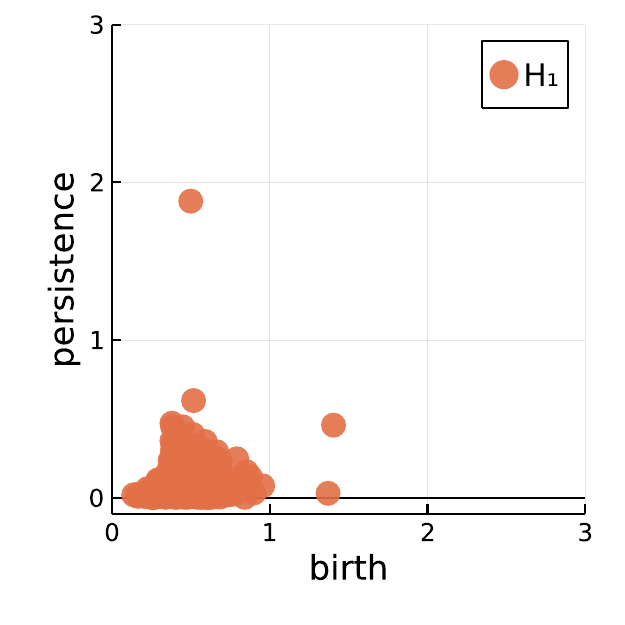}
        \caption{$\dgm\qty( \bbv[\Xnm] )$}
    \end{subfigure}
    \begin{subfigure}[c]{0.32\textwidth}
        \includegraphics[height=\textwidth]{./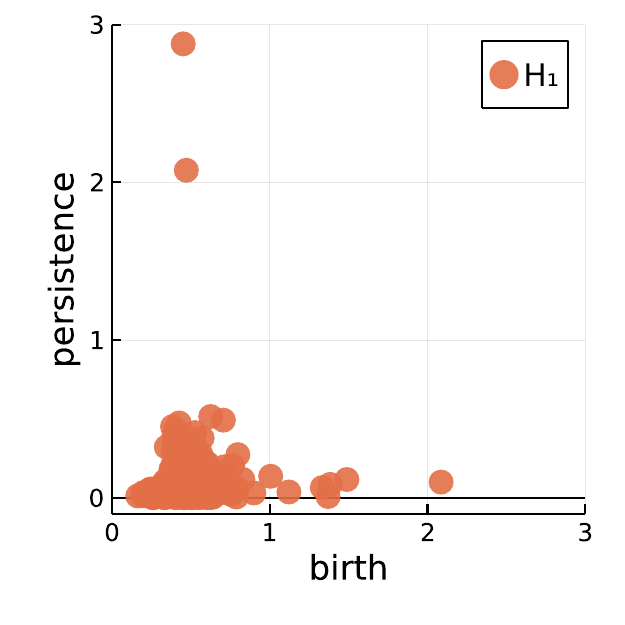}
        \caption{$\dgm\qty( \bbv[\Xn] )$}
    \end{subfigure}
    \begin{subfigure}[c]{0.32\textwidth}
        \includegraphics[height=\textwidth]{./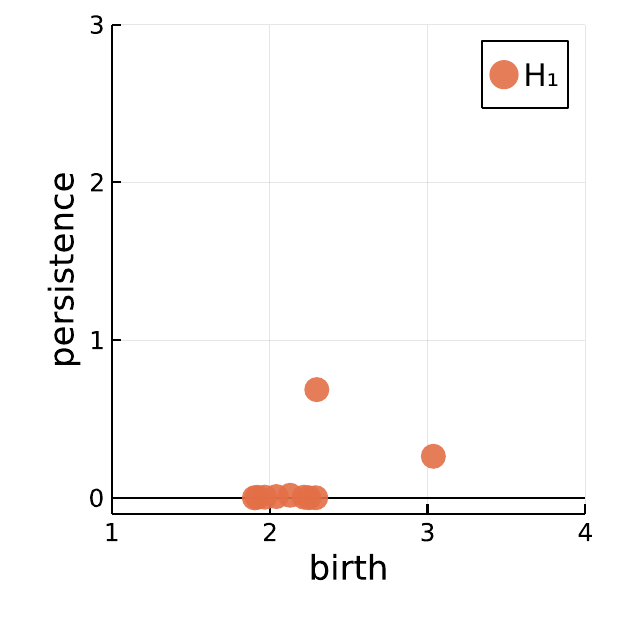}
        \caption{$\dgm\qty( \bbv[\Xn, \dnq] )$}
    \end{subfigure}\hspace{10pt}
    \caption{Recovering the topological information underlying the signal in the presence of adversarial contamination.}
    \label{fig:mnist}
\end{figure}

\endgroup

\subsection{Comparison with existing methods}
\label{exp:pathological}

{
For this experiment, we examine the robustness of several filtrations as the sample size $n$ grows and the number of outliers increases as $n=m^\epsilon$ for $\epsilon \in (0, 1)$. In particular, for each $n \in \qty{250,\cdots, 2000}$, $\Xnm \subset \R^2$  is sampled from the unit circle with additive Gaussian noise with $\sigma=0.1$, and compute the persistence diagram $\D[n]^*$. Outliers $\Ym$ are sampled from a Mat\'{e}rn cluster process with $m = n^\epsilon$ for $\epsilon=0.5$. For the composite sample $\Xn = \Xnm \cup \Ym$, persistence diagrams are constructed using the MoMDist, DTM, \kpdtm{} and the trimmed variant of the \kpdtm{}. For the MoMDist, we set $Q=2m+1$, and for the DTM we found that $k=10$ worked best. This is in line with the recommendation from \citet[Section~4.1]{brecheteau2018k}. For the \kpdtm{}, we set $N=n/5$ so as to not aggressively bias the $K$-point approximation, and for the trimmed version of the \kpdtm{}, we set $\alpha = (n-m)/n$ so that a total of $m = n(1-\alpha)$ points are considered outliers and trimmed \citet[Section~4.2]{brecheteau2018k}. The results are reported in Figure~\ref{fig:pathological}. For each persistence diagram $\D[n]$ we compare $\D[n]$ to $\D[n-m]^*$ in bottleneck distance $\winf\qty(\D[n], \D[n-m]^*)$. In addition, we also assess the quality of the signal in $\D[n]$ by computing 
$$
\Delta\text{-Lifetime}\qty(\D[n], \D[n-m]^*) := \frac{f(\D[n])}{f(\D[n-m]^*)} \qq{where} f(\D[]) = \frac{\text{pers}_{\text{max}}(\D[]) - \text{pers}_{\text{max}-1}(\D[])}{\text{pers}_{\text{max}}(\D[])},
$$
and $\text{pers}_{\max}(\D[])$ is the persistence of the longest bar in the first-order diagram of $\D[]$. Intuitively, $f(\D[])$ measures the excess persistence in the longest bar relative to the second longest bar in $\D[]$, and the ratio $\Delta\text{-Lifetime}(\D)$ measures how closely this matches the $f(\D[n-m]^*)$ in the signal. Ideally, we would like $\Delta\text{-Lifetime}\qty(\D[n], \D[n-m]^*)$ to be close to $1$ as this would indicate that the longest bar in $\D[n]$ is separated from the second longest bar to the same extent as that in $\D[n-m]^*$.

As corroborated by Theorem~\ref{theorem:momdist-consistency}, when $m = o(n)$ the bottleneck distance $\winf\qty(\D[n], \D[n-m]^*)$ for the MoMDist is decreasing as seen in Figure~\ref{fig:pathological}\,(a), and $\D[n]$ recovers the signal for moderately large $n$ as shown in Figure~\ref{fig:pathological}. On the other hand, the persistence diagrams for DTM, \kpdtm{}, and its trimmed variant are more influenced by the outliers. This can be attributed, to some extent, to the fact that the DTM is essentially inversely proportional to the density of the point cloud \citet{biau2011weighted}; therefore, the clusters of outliers in $\Ym$ have a noticeable impact on the DTM. By virtue of this, the \kpdtm{} and its trimmed variant are also influenced by the outliers. Interestingly, in most cases, the quantized \kpdtm{} performed better than the trimmed \kpdtm{}, as seen in Figure~\ref{fig:pathological}\,(b). In essence, when the outliers are particularly troublesome, even a few outliers misclassified as signal points can have a significant impact on the trimmed \kpdtm{}. We also note that, unlike the MoMDist, under certain circumstances, e.g., when the noise $\Ym$ is uniformly distributed in the ambient space and the signal is $(a, b)$-standard, the \kpdtm{} remains robust to outliers even when $m > n/2$. We refer the readers to \citet[Section~4.3]{brecheteau2018k} for experiments in this setting.

Lastly, in Table~\ref{tab:summary}, we provide a summary of the time and memory usage for the MoMDist, DTM, RKDE, and the \kpdtm{} for the same setup. We exclude the trimmed \kpdtm{} from the summary since it always takes longer than \kpdtm{}. In all cases, we use the publicly available implementation.\footnote{\url{https://github.com/GUDHI/TDA-tutorial/blob/master/Tuto-GUDHI-kPDTM-kPLM.ipynb}} We note that MoMDist and DTM are significantly faster and use less memory despite the fact that the \kpdtm{} constructs persistence diagrams using a smaller number of quantized points. The main computational bottleneck for the \kpdtm{} stems from the preprocessing step which uses a variant of Lloyd's algorithm, and is likely to be more efficient in settings where $n$ is very large. 
}

\begin{table}%[ht]
    \centering
    \caption{Combined Summary of Time and Memory Usage}
    \label{tab:summary}
    \resizebox{\linewidth}{!}{
    \begin{tabular}{@{}lccccc|rrrrr@{}}
        \toprule
        & \multicolumn{5}{c}{Time ($\textup{seconds}$)} & \multicolumn{5}{c}{Memory ($\textup{MB}$)} \\ 
        \cmidrule(lr){2-6} \cmidrule(lr){7-11}
        \qquad$n$ & 500 & 1000 & 1500 & 2000 & 2500 & 500 & 1000 & 1500 & 2000 & 2500 \\
        \midrule
        MoMDist   & $0.047$ & $0.237$ & $0.570$ & $1.166$ & $2.105$ & $30.94$ & $109.79$ & $221.43$ & $426.67$ & $649.88$ \\
        DTM   & $0.054$ & $0.240$ & $0.593$ & $1.373$ & $2.177$ & $29.61$ & $106.99$ & $190.41$ & $419.92$ & $634.20$ \\
        \kpdtm{} & $0.539$ & $0.577$ & $2.523$ & $5.755$ & $5.838$ & $558.03$ & $2806.43$ & $5511.02$ & $14882.10$ & $21109.80$ \\
        RKDE  & $0.178$ & $1.216$ & $1.873$ & $3.961$ & $7.074$ & $93.49$ & $368.59$ & $659.98$ & $1202.16$ & $1968.14$ \\
        \bottomrule
    \end{tabular}
    }
    \end{table}

\begin{figure}%[t!]
    \centering
    \includegraphics[width=\linewidth]{./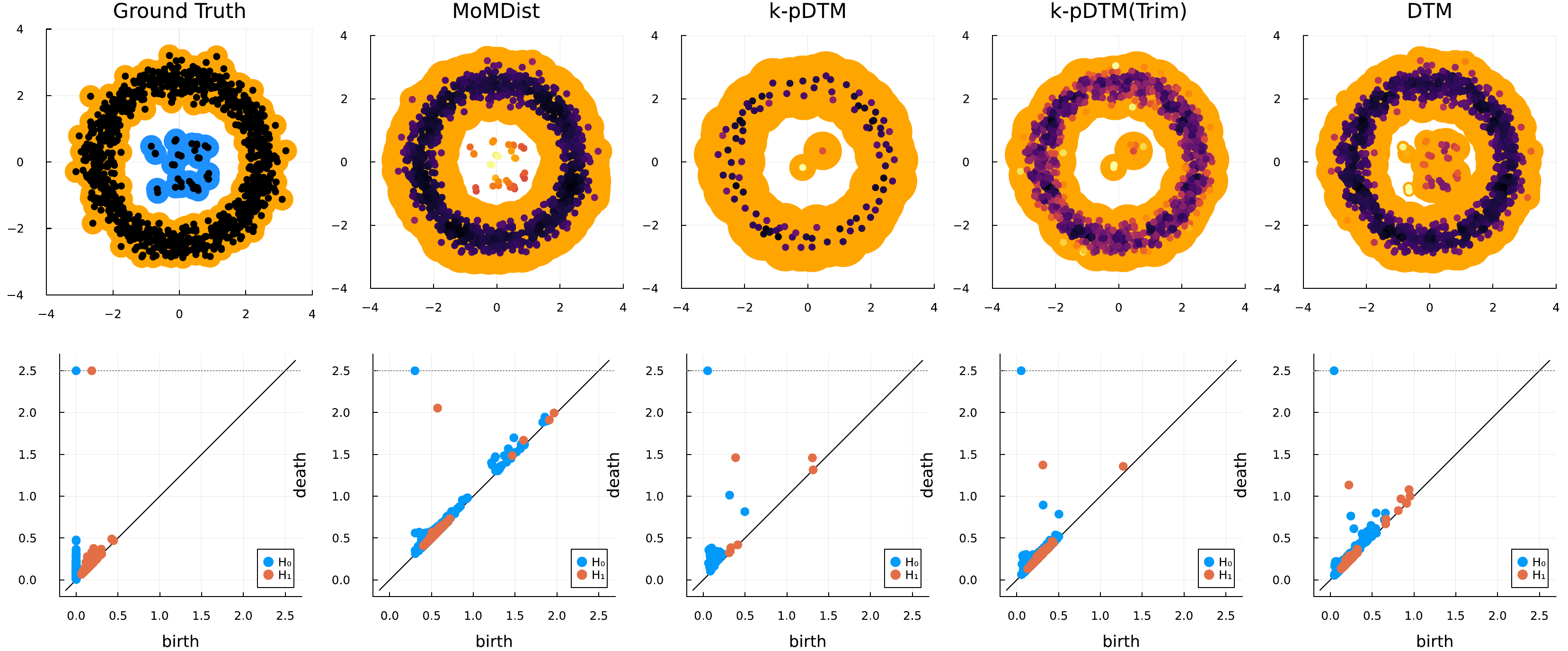}
    % \begin{subfigure}[b]{\linewidth}
    %     \includegraphics[width=\textwidth]{./}
    % \end{subfigure}
    % \begin{subfigure}[b]{\linewidth}
    %     \includegraphics[width=\textwidth]{./}
    % \end{subfigure}
    \caption{Illustration of the offsets $V^t_{n+m}$ and the persistence diagrams $\D[n+m]$ for the MoMDist, \kpdtm{}, the trimmed \kpdtm{}, and the DTM filtration, where $n-m$ points are sampled from a circle with additive Gaussian noise and $m$ outliers are sampled from a Mat\'{e}rn cluster process.}
    \label{fig:illustration}
\end{figure}

\begin{figure}%[t!]
    \centering
    \small
    \begin{subfigure}[b]{0.44\linewidth}
        \includegraphics[height=\textwidth]{./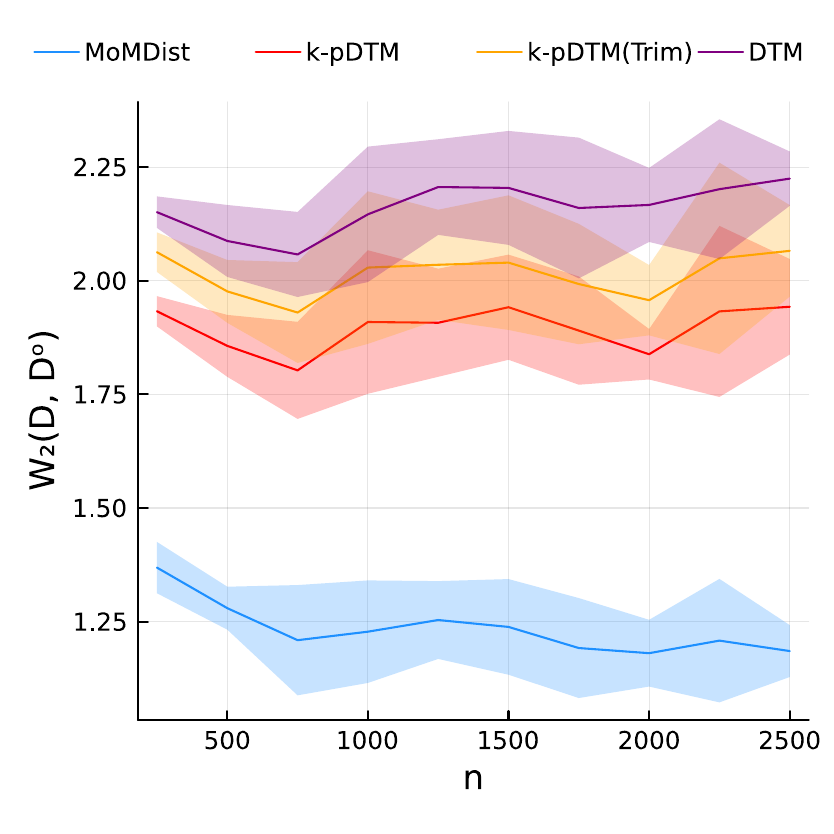}
        \caption{$\Winf\qty(\D[n], \D[n-m]^*)$}
    \end{subfigure}
    \quad
    \begin{subfigure}[b]{0.44\linewidth}
        \includegraphics[height=\textwidth]{./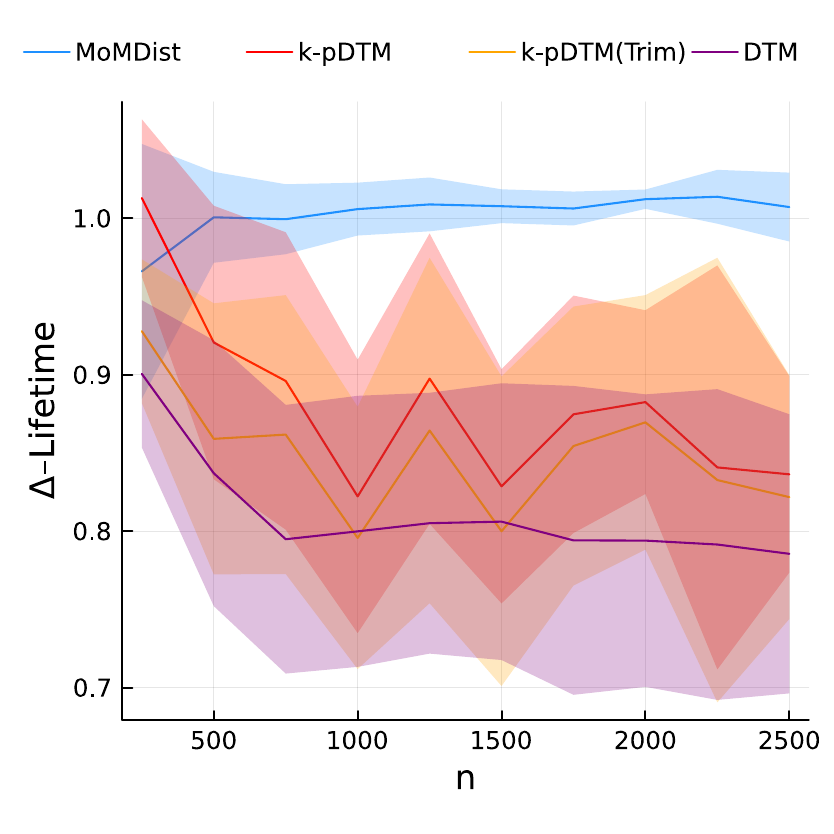}
        \caption{$\Delta$-Lifetime$\qty(\D[n], \D[n-m]^*)$}
    \end{subfigure}
    \caption{Comparison of MoMDist, DTM, \kpdtm{} and the trimmed \kpdtm{} for the experiment in \cref{exp:pathological}.}
    \label{fig:pathological}
\end{figure}

% Experiments
%%%%%%%%%%%%%%%%%%%%%%%%%%%%%%%%%%%%%%%%

%%%%%%%%%%%%%%%%%%%%%%%%%%%%%%%%%%%%%%%%
% Discussion

\section{Conclusion \& Discussion}
\label{sec:discussion}

In this paper, we introduce a methodology for constructing filtrations that are computationally efficient, provably robust, and statistically (near) optimal even in the presence of outliers. %To our knowledge, our results are the first of this type. 

To elaborate, we introduced \md{}, $\dnq$, as a computationally efficient and outlier-robust variant of the distance function based on the median-of-means principle, and established some of its theoretical properties. In particular, when the samples contain outliers in the adversarial contamination setting, we (i) showed that the $\dnq$-weighted filtrations are statistically (near) optimal estimators of the true (uncontaminated) population counterpart, (ii) characterized its convergence rate in the bottleneck metric, and (iii) provided uniform confidence bands in the space of persistence diagrams. Furthermore, we used an empirical influence analysis framework to quantify the robustness of the $\dnq-$filtrations, and provide a framework for selecting the parameter $Q$. 

Topological inference in the presence of outliers is a topic that has received considerable attention in recent years and with good reason. We would like to highlight that the objective of this paper has been to develop a framework of topological inference in which the population target is the persistence diagram $\dgm\pa{\bbv[\Xb]}$. Therefore, the proposed methodology disregards, to a large extent, the distribution of mass on the support. As a future direction, we would like to explore a framework of inference that incorporates information from, both, the geometry of the underlying space and the structure of the probability measure generating the data. As noted in \citet[Section~5]{anai2019dtm}, their results follow only from a few simple properties of the distance-to-measure. We build off their foundation to provide some useful generalizations which we hope will be useful in the analysis of other estimators using this framework. 

% Discussion
%%%%%%%%%%%%%%%%%%%%%%%%%%%%%%%%%%%%%%%%

%%%%%%%%%%%%%%%%%%%%%%%%%%%%%%%%%%%%%%%%
% Acknowledgement & References

\section*{Acknowledgements}
BKS is partially supported by the National Science Foundation (NSF) CAREER Award DMS-1945396. SK is partially supported by JSPS KAKENHI Grant Number 21H03403.  KF and SK are supported by JST, CREST Grant Number JPMJCR15D3, Japan.

\bibliographystyle{plainnat}
\bibliography{src/MoMRefs}
\clearpage

% \newpage
\appendix
\section*{Appendix}

% Acknowledgement & References
%%%%%%%%%%%%%%%%%%%%%%%%%%%%%%%%%%%%%%%%

%%%%%%%%%%%%%%%%%%%%%%%%%%%%%%%%%%%%%%%%
% Appendix: TDA

\section{Background on Persistent Homology}
\label{sec:persistent-homology-appendix}

Given a compact set $\bX$, the building block of any topological data analysis pipeline to extract meaningful information from $\bX$ begins with a nested sequence of filtered topological spaces called a filtration, simply denoted by $V$. The sequence of spaces {is} parametrized by a resolution parameter $t$. There are several approaches for constructing a filtration using $\Xb$. One approach is to consider the collection of offsets built on top of $\Xb$, i.e., $V^t = V^t[\bX] = \Xb(t)$.
For $s < t$, the offsets are nested $V^s \subseteq V^t$, and $V[\bX] \defeq \pb{ V^t[\Xb] : t \in \R }$ is a nested sequence of topological spaces and defines the filtration built using the offsets of $\Xb$. 

The second approach to constructing a filtration is using a filter function $f_{\Xb}: \R^d \rightarrow \R$ which carries the topological information underlying $\Xb$. In this scenario, one typically constructs the filtration from the sublevel sets associated with $f_\Xb$, given by $V^t = f\inv_{\Xb}\qty\big({ (-\infty, t] })$ for each resolution $t$. Again, for $s<t$, {$V^s[f_\Xb] \subseteq V^t[f_\Xb]$} and the sequence $V[f_\Xb] = \pb{ V^t[f_{\Xb}] : t \in \R }$ constitutes the sublevel filtration from $f_{\Xb}$. Mutatis mutandis a similar notion holds for the superlevel filtration.

In general, the filtration $V[\Xb]$ can be very different from $V[f_{\Xb}]$, although the prevailing objective is for $V[f_{\Xb}]$ to encode the same information as in $V[\Xb]$. In this context, the distance function $\dx$ plays a special role owing to the fact that its sublevel filtration is the same filtration associated with the offsets, i.e., $V[\dx] = V[\bX]$. This fact plays an important role in motivating the \md{} estimator introduced in Section~\ref{sec:proposal}, and follows by noting that for every resolution $t > 0$,
\eq{
\dsf_{\Xb}\inv\qty\Big({ (-\infty, t] }) = \pb{\xv \in \R^d : \dx(\xv) \le t} = \bigcup_{\xv \in \Xb}B(\xv, t).\nn
}
Let $V = \pb{V^t : t \in \R}$ denote a generic filtration and let $\iota_{s}^t: V^s \hookrightarrow V^t$ denote the inclusion map between the filtered spaces at resolutions $s < t$. For each resolution $t$, let $\bbv^t = \textup{H}_*\pa{V^t; \bF}$ be the homology\footnote{Where, as per convention, the order of homology, denoted by $*$, is an arbitrary non-negative integer.} of $V^t$ with coefficients in a field $\bF$. As the resolution $t$ varies, the evolution of topological features is captured by $V$. Roughly speaking, new cycles (i.e., connected components, loops, holes, and higher dimensional analogues) are born, or existing cycles {can disappear.} The collection of cycles in $V^t$ at each resolution $t$ is encoded as a vector space in $\bbv^t$. The inclusion maps $\iota_{s}^t: V^s \hookrightarrow V^t$ induce linear maps $\phi_s^t: \bbv^s \rightarrow \bbv^t$ between the vector spaces $\bbv^s$ and $\bbv^t$. 

As such, the collection $V$ can be described more succinctly as the \emph{category} $V = \qty{V^t, \iota_s^t : s \le t}$ with the inclusion maps $\iota_s^t$ \mbox{representing the morphisms for ${s\le t}$}. The image of $V$ under the \emph{homology functor} $\mathbf{Hom}_* : V \mapsto \bbv$, gives us the \emph{persistence module}
\eq{
    \bbv \defeq \qty\Big{\bbv^t, \phi_s^t: {s\le t}},\nonumber
%    \label{eq:unweighted-persistence-module}
}
where the induced maps $\phi_s^t: \bbv^s \rightarrow \bbv^t$ are homomorphisms between two vector spaces. For $r < s < t$, the persistence module can equivalently be represented as
\begin{figure}[H]
    \includegraphics[]{./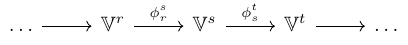}
    \vspace{-10pt}
\end{figure}
Informally, a new topological feature is born at resolution $b \in \R$ if the cycle associated with that feature is not present in $\bbv^{b-\e}$ for all $\e > 0$. The same feature is said to die at resolution $d>b$ if the cycle associated with this feature disappears from $\bbv^{d+\e}$ for all $\e > 0$, resulting in the (ordered) persistence pair $(b,d)$. By collecting all the persistence pairs, the persistence module $\bbv$ may be succinctly represented by a \textit{persistence diagram},
\eq{
    \dgm\pa{\bbv} \defeq \pb{ (b,d) \in \R^2: b \le d \le \infty }.\nn
}

\subsection{Interleaving of Persistence Modules}
\label{sec:interleaving}
% Interleaving
%%%%%%%%%%%%%%%%%%%%%%%%%%%%%%%%%%%%%%%%%%%%%%%%%%%%%%%%%%%%%%%%%%%%%%

Given two persistence modules $\bV = \pb{\bV[][t], \phi_s^t}_{s\le t}$ and $\bW = \pb{\bW[][t], \psi_s^t}_{s \le t}$, they are said to be equivalent (or isomorphic) if there exists a family of linear maps $\pb{\xi_t}_{t \in \R}$ such that each $\xi^t: \bV[][t] \rightarrow \bW[][t]$ is an isomorphism. This notion can be extended to define two collections of maps $\pb{\alpha_t : t \in \R}$ and $\pb{\beta_t : t \in \R}$ which weave the two persistence modules together.

\bigskip

\begin{definition}[Interleaving of persistence modules] Given two persistence modules $\bV$ and $\bW$, and two monotone increasing maps ${\alpha,\beta: \R \rightarrow \R}$, $\bV$ and $\bW$ are said to be $\pa{\alpha,\beta}$--interleaved if the following diagrams commute for all $s \le t$
    \begin{figure}[H]
        \includegraphics[]{./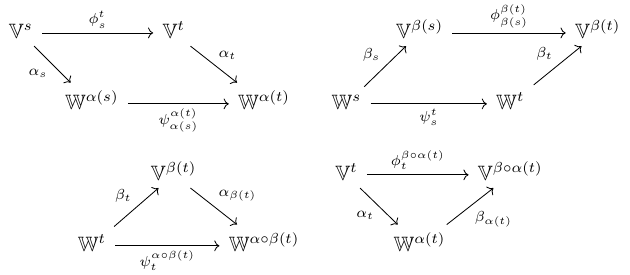}
    \end{figure}
    \label{def:interleaving}
\end{definition}
\begin{remark}
    The persistence modules $\bbv$ and $\bbw$ are purely algebraic objects, and their underlying filtrations $V$ and $W$ are not necessarily compatible. However, when the filtrations $V$ and $W$ arise as filtered subsets of the same underlying space (e.g., $\R^d$), we can similarly define an $(\alpha,\beta)-$interleaving between the filtrations $V$ and $W$ by replacing all linear maps in Definition~\ref{def:interleaving} by inclusion maps.
\end{remark}

The resulting persistence diagrams $\dgm\pa{\bbv}$ and $\dgm\pa{\bbw}$ are elements of the space of persistence diagrams $\Omega = \pb{ (x,y) : x \le y }$ endowed with the family of $q$-Wasserstein metrics $W_q(\cdot, \cdot)$ for $1 \le q \le \infty$. We refer the reader to \cite{edelsbrunner2010computational,mileyko2011probability} for more details. In the special case of $q = \infty$, the resulting metric $\winf$ is commonly referred to as the \textit{bottleneck distance}, and is given as follows. 

\begin{definition}[Bottleneck distance]
    Given two persistence diagrams $D_1, D_2 \in \Omega$, the bottleneck distance is given by
    \eq{
        \winf\pa{D_1, D_2} \defeq \inf_{\gamma \in \Gamma}\sup_{p \in D_1 \cup \Delta} \norminf{ p - \gamma(p) },\nonumber
    }
    where $\Gamma = \pb{\gamma : D_1 \cup \Delta \rightarrow D_2 \cup \Delta}$ is the set of all multi-bijections from $D_1$ to $D_2$ including the diagonal $\Delta = \pb{(x,y) : x=y}$ with infinite multiplicity\footnote{In order to ensure that both persistence diagrams have the same cardinality.}.
\end{definition}

Although the space of persistence diagrams $(\Omega, W_q)$, together with the $q-$Wasserstein distance, presents a challenging mathematical structure for refined statistical analyses \citep{mileyko2011probability,turner2014frechet}, the stability of persistence diagrams \citep{cohen2007stability,chazal2016structure} provides a handle on this space by allowing us to directly work on the space generating the filtrations. 

\begin{lemma}[Stability of persistence diagrams]
    Given two compact sets $\bX, \bY \subset \R^d$,
    $$
        \winf\qty\Big({ \dgm\pa{ \bbv[\Xb] }, \dgm\pa{ \bbv[\Yb]} }) \le \haus{\Xb, \Yb}[].\nn
    $$
    Alternatively, for two filter functions $f,g : \R^d \rightarrow \R$,
    $$
        \winf\qty\Big({ \dgm\pa{ \bbv[f] }, \dgm\pa{ \bbv[g]} }) \le \norminf{f-g}.\nn
    $$
\end{lemma}

\begin{remark}\label{remark:interleaving-1} Given two persistence modules $\bV$ and $\bW$ and their associated persistence diagrams $\dgm\pa{\bV}$ and $\dgm\pa{\bW}$, the following relationships hold:
    \begin{enumerate}[label=\textup{{(\roman*)}}, itemindent=*]
        \item When the interleaving maps $\pa{\alpha,\beta}$ are additive, i.e., of the form ${\alpha: t \mapsto t + \epsilon}$ and ${\beta: t \mapsto t+\delta}$, then persistence diagrams $\dgm\pa{\bV}$ and $\dgm\pa{\bW}$ obtained from the persistence modules  satisfy the following relationships:
            \eq{
                \dgm\pa{\bV} \in \dgm\pa{\bW} \oplus \pc{-\delta, \e}^2 \hspace{1em} \text{ and } \hspace{1em} \dgm\pa{\bW} \in \dgm\pa{\bV} \oplus \pc{-\e, \delta}^2,\nn
            }
            where $\oplus$ denotes the Minkowski sum in $\R^2$. A \emph{coarser} bound is obtained from the stability theorem, which guarantees that
            \eq{
                \Winf\qty\big(\dgm\pa{\bV}, \dgm\pa{\bW}) \le \max\pb{\e,\delta}.\nn
            }
        \item\label{remark:isometry} Furthermore, when the interleaving maps are identical, i.e., $\alpha \equiv \beta: t \mapsto t + \epsilon$, this notion can be extended to define an \textit{interleaving pseudo-distance} between persistence modules,
        \eq{
            d_{\mathcal{I}}\pa{\bV, \bW} \defeq \inf\qty\Big{ \e > 0 : \bV \text{ and } \bW \text{ are } (\alpha,\alpha)\text{--interleaved for } \alpha: t \mapsto t + \e }.\nn
        }
        From the isometry theorem \citep{chazal2016structure} the \emph{interleaving distance} is identical to the \emph{bottleneck distance}, i.e., $\Winf\qty\big(\dgm\pa{\bV}, \dgm\pa{\bW})  = d_{\mathcal{I}}\pa{\bV, \bW}$. Therefore, with a slight abuse of notation, for the proofs in \cref{sec:proofs} we use $\Winf(\bV, \bW)$ to denote the interleaving distance between the persistence modules $\bV$ and $\bW$. In such cases, it is equivalent to say that $\bV$ and $\bW$ are $\pa{\alpha,\alpha}$--interleaved or ${d_{\mathcal{I}}\pa{\bV, \bW} \le \e}$. Similarly, for filtrations $V$ and $W$ comprising of subsets of $\R^d$,
        \eq{
            d_{\mathcal{I}}\pa{V, W} \defeq \inf\qty\Big{ \e > 0 : V^t \subseteq W^{t+\e} \quad \text{and} \quad W^t \subseteq V^{t+\e} }.
            \label{eq:interleaving-filtration}
        }
        Moreover, $d_{\mathcal{I}}\pa{V, W} \le \e \Longrightarrow d_{\mathcal{I}}\pa{\bV, \bW} \le \e \Longrightarrow \Winf\qty\big(\dgm\pa{\bV}, \dgm\pa{\bW}) \le \e$.
    \end{enumerate}
\end{remark}

\subsection{Weighted Rips Filtrations}
\label{sec:weightedrips}
% weightedRips
%%%%%%%%%%%%%%%%%%%%%%%%%%%%%%%%%%%%%%%%%%%%%%%%%%%%%%%%%%%%%%%%%%%%%%

In practice, given a compact set $\Xb \subset \R^d$ or a filter function $f$, the persistence modules $\bbv[\Xb]$ and $\bbv[f]$ are computed using simplicial complexes. In particular:
\begin{enumerate}[label=(\roman*), itemindent=*]
    \item For each $t \in \R$, one may use the \cech{} or Alpha complex to compute the nerve of the cover, $\text{nerve}\pb{ B(\xv, t) : \xv \in \Xb }$. Since the Nerve lemma \citep{edelsbrunner2010computational} guarantees that ${V^t[\Xb] \cong \text{nerve}\pb{ B(\xv, t) : \xv \in \Xb }}$, the resulting persistence module $\bbv[\bX]$ may be computed exactly using simplicial homology. 
    \item In the case of $\bbv\pc{f}$, this is typically achieved by choosing a grid resolution parameter $\e$, and constructing a cubical complex $\k_\e$ on the underlying space. The function $f:\R^d \rightarrow \R$ may be extended to define $f: \k_\e \rightarrow \R$, and at each resolution $t \in \R$, the sublevel sets $V^t[f_\Xb]$ can be approximated using the lower-star filtration $\k_\e^t  = \pb{ \sigma \in \k_\e : \max_{\xv \in \sigma}f(\xv) \le t }$. Therefore, the filtration $\bbv[f]$ can be approximated by the filtration $\pb{\k_\e^t : t \in \R}$, and the resulting persistence module is computed using cubical homology. 
\end{enumerate}

Note that (i) is able to compute the exact persistence module in practice, but is unable to weight points according to $f$. On the other hand, (ii) is only an approximate computation and depends on the nuisance parameter $\e$. Furthermore, the size of the cubical complex is $\abs{\k_\e} = O(\e^{-d})$, making it scale poorly in high dimensions. To overcome this limitation, \cite{buchet2016efficient} proposed the $f$-weighted filtrations, which was subsequently generalized by \cite{anai2019dtm}. 

Given a non-negative \emph{weight function} $f: \R^d \rightarrow \R_{\ge 0}$ and \textit{power} $1 \le p \le \infty$, the \emph{weighted radius function}  of resolution $t>0$ at $\xv$ is given by
\begin{equation}%\label{eq:weighted-radius}
    \rfx \defeq \begin{cases}
        \pa{t^p - f(\xv)^p}^{1/p} & \text{ if } t \ge f(\xv) \\
        -\infty                   & \text{ if } t < f(\xv).
    \end{cases}\nonumber
\end{equation}
Consequently, $\Bfx[f,\rho][][][]$ is the \emph{weighted ball of resolution $t$ at $\xv$} w.r.t.~the metric $\rho$, which is illustrated in Figure~\ref{fig:ball}, and is given by
\eq{
    \Bfx[f,\rho][][][] \defeq B_{\rho}\pa{\xv, \rfx} = \pb{\yv \in \R^d: \rho(\xv,\yv) \le \rfx}.\nn
}

\begin{figure}
    \centering
    \begin{subfigure}[b]{0.32\textwidth}
        \includegraphics[width=\textwidth]{figures/plots/v1.pdf}
        \caption{$V^t[\Xn]$ unweighted}
    \end{subfigure}
    \begin{subfigure}[b]{0.32\textwidth}
        \includegraphics[width=\textwidth]{figures/plots/v2.pdf}
        \caption{$V^t[\Xn, f]$ for $p=1$}
    \end{subfigure}
    \begin{subfigure}[b]{0.32\textwidth}
        \includegraphics[width=\textwidth]{figures/plots/v3.pdf}
        \caption{$V^t[\Xn, f]$ for $p=\infty$}
    \end{subfigure}
    \caption{Illustration of offsets for $t=0.5$ and $f(\xv) = \inf_{\yv \in \mathbb{S}^1}\norm{\xv-\yv}$.}
    \label{fig:ball}
\end{figure}
Given $\bX \subseteq \R^d$, the collection of weighted balls ${\VVt[][\bX, f][] = \pB{\Bfx[f][\xv][]: \xv \in \bX}}$, is called the \emph{weighted cover} of $\Xn$. The $f$-weighted offset at resolution $t$ is given by the union of balls in $\VVt[][\bX, f][]$,
\eq{
    \Vt[][\bX,f][] \defeq \bigcup_{\xv \in \bX} \Bfx[f][\xv][].\nonumber
%    \label{eq:union-of-weighted-balls}
}
Together with the inclusion maps $\iota_s^t:\Vt[s][\bX,f][] \hookrightarrow \Vt[][\bX,f][]$, the $f-$\emph{weighted filtration} is given by 
\eq{
    V[\bX,f] \defeq \pB{\Vt[][\bX,f][], \iota_s^t : {s \le t}}.\nonumber
%    \label{eq:persistence-module}
}
The image of $V[\bX,f]$ under the \emph{homology functor} $\mathbf{Hom}_* : V[\bX,f] \mapsto \bV[][][\bX,f]$, results in the \emph{weighted persistence module} $\bV[][][\bX,f] \defeq \qty{\bVt[][\bX,f][], \phi_s^t : {s\le t}}$, where the induced maps ${\phi_s^t: \bVt[s][\bX,f][] \rightarrow \bVt[][\bX,f][]}$ are linear maps between vector spaces. The weighted-simplicial complexes
\eq{
    \Ct[][\bX,f][] = \text{nerve}\pb{\VVt[][\bX,f][]} \hspace{2mm} \text{ and } \hspace{2mm} \Rt[][\bX,f][] = \text{Rips}\pb{\VVt[][\bX,f][]}\nn
}
denote the weighted-\cech{} complex and weighted-Rips complex associated with the weighted cover $\VVt[][][]$ respectively. Without loss of generality $\bVt[][\bX,f][] = \textup{H}_*\pa{\Vt[][\bX,f][]}$ is the homology of the offset $\Vt[][\bX,f][]$, which, by the nerve lemma, is the same as the homology of the weighted-\cech{} complex. Furthermore, if $f(\xv) \equiv 0$ for all $\xv \in \R^d$ then the resulting filtrations are the usual unweighted filtrations. In particular, $V[\Xn] \cong \Ct[ ][\Xn,f][ ]$ and $\Rt[ ][\Xn, f][ ]$ correspond to \cech{ and Rips} filtrations, respectively. The following structural results appear in \cite{anai2019dtm}, and serve as analogues of the stability result for $f$-weighted filtrations.

\begin{lemma}[{\citealp[Propositions 3.2 \& 3.3]{anai2019dtm}}]
    Given $\bX \subset \R^d$ and $f,g : \bX \rightarrow \R$
    \begin{enumerate}[label=\textup{(\roman*)}]
        \item $\bbv[\bX,f]$ and $\bbv[\bX, g]$ are $(\alpha,\alpha)$--interleaved for $\alpha: t \mapsto t + \norminf{f-g}$.
    \end{enumerate}
    Additionally, given $\bY \subset \R^d$ and $h: \bX \cup \bY \rightarrow \R_+$, if $h$ is $L$--Lipschitz and $\haus{\bX,\bY} \le \e$, then
    \begin{enumerate}[label=\textup{(\roman*)}, resume]
        \item $\bbv[\bX,h]$ and $\bbv[\bY, h]$ are $(\beta,\beta)$--interleaved for $\beta: t \mapsto t + \e \pa{1+L^p}^{1/p}$.
    \end{enumerate}
\end{lemma}

% Appendix: TDA
%%%%%%%%%%%%%%%%%%%%%%%%%%%%%%%%%%%%%%%%

%%%%%%%%%%%%%%%%%%%%%%%%%%%%%%%%%%%%%%%%
% Appendix: Notation

\begin{table}[h!]
    \caption{{\small Glossary of Notation}}\label{tab:glossary}
    \vspace*{-1em}
    \resizebox{0.99\columnwidth}{!}{
    \begin{tabular}{ll}
        %%%%%%%%%%%%%%%%%%%%%%%%%%%%%%%%%%%  wRips Glossary  %%%%%%%%%%%%%%%%%%%%%%
        % \multicolumn{2}{c}{\bfseries $f$--weighted Rips filtrations for fixed $p>1$} \\ \hline
        %%%%%%%%%%%%%%%%%%%%%%%%%%%%%%%%%%%%%%%%%%%%%%%%%%%%%%%%%%%%%%%%%%%%%%%%%%%
        \toprule[2pt]
        \textsc{Notation} & \textsc{Description}\\
        \midrule
        $\mathsf{H}_\rho(\bX, \bY)$ & Hausdorff distance between $\bX \subseteq \M$ and $\bY \subseteq \M$ measured w.r.t. metric $\rho$.\\
        $V^t[f]$ & Sublevel set of $f$ at level $t$ given by $\pb{\xv \in \R^d : f(\xv) \le t}$\\
        % $\textbf{Hom}$ & The Homology functor mapping $\qty{V^t, \iota_s^t: s \le t}$ to $\qty{\bbv^t[f], \phi_s^t: s \le t}$\\
        $V[f]$ and $\bbv[f]$ & Sublevel filtration $\pb{V^t[f]: t \in \R}$ and its persistence module $\textbf{Hom}(V[f])$\\
        $\rfx$ & The $f$--weighted radius function of resolution $t$ at $\xv$. $\rfx=\pa{t^p - f(\xv)^p}^{\f1p}$\\
        $\Bfx[f,\rho][\xv][t]$ & $f$--weighted ball at $\xv$ with radius $\rfx$ w.r.t the metric $\rho$.\\
        $\Vt[][][]$ & The $f$--weighted offset of $\bX$ at resolution $t$ given by $\Vt[][][] = \bigcup_{\xv\in\bX}\Bfx[f,\rho][\xv][t]$\\
        $\Vt[ ][][]$ & $f$--weighted filtration $\qty{\Vt : t \in \R}$ s.t. $\Vt[s][][] \subseteq \Vt$ for all $s \le t$.\\
        $\bVt[ ][][]$ & $f$--weighted persistence module, i.e., $\bVt[ ][][] = \qty{\bbv^t[f], \phi_s^t: s \le t}$ for linear maps $\phi_s^t$.\\
        %\textbf{Hom}\pa{\Vt[ ][][]}$\\ 
        $\dgm\pa{\bbv}$ & Persistence diagram associated with the persistence module $\bbv$\\
        $\hat\theta_{n,Q}$ & MoM-estimator, $\median\{\hat\theta_{1},\dots, \hat\theta_Q\}$, where $\hat\theta_q$ is the estimator from block $S_q$.\\
        $\dnq$ & \md{} function given by $\dnq(\xv) = \median\pb{\inf_{\yv \in S_q}\norm{\xv-\yv} : q \in [Q]}$\\
        $\dx$ & Distance function to a compact set $\Xb$ given by $\dx(\yv) = \inf_{\xv \in \Xb}\norm{\xv-\yv}$\\
        \bottomrule[2pt]
    \end{tabular}
    }
\end{table}

% Appendix: Notation
%%%%%%%%%%%%%%%%%%%%%%%%%%%%%%%%%%%%%%%%

%%%%%%%%%%%%%%%%%%%%%%%%%%%%%%%%%%%%%%%%
% Appendix: Proofs

\section{Proofs}
\label{sec:proofs}

\textbf{Notation.}\quad We adopt the notations introduced in the main text. A glossary of notation is provided in Table~\ref{tab:glossary} for the reader's convenience. In the proofs, we use $\id: b \mapsto b$ to denote the identity map, and we also use $a \lesssim b$, equivalently $b \gtrsim a$, if there exists a constant $C>0$ such that $a \le Cb$. Throughout, we use $C, c, C_1, C_2, \dots$ to denote constants whose values may change from line to line, but are independent of the other parameters of the problem. With a slight abuse of notation and in the interest of brevity, we use $\Winf(\bV, \bW)$ to denote $\Winf(\dgm(\bV), \dgm(\bW))$ which is equivalent to the interleaving distance between the persistence modules $\bV$ and $\bW$ from Remark~\ref{remark:interleaving-1}\,\ref{remark:isometry}.

% Proofs
%%%%%%%%%%%%%%%%%%%%%%%%%%%%%%%%%%%%%%%%%%%%%%%%%%%%%%%%%%%%%%%%%%%%%%

\allowdisplaybreaks

\providecommand{\ut}[1]{U^{#1}}
\providecommand{\vt}[1]{V^{#1}}
\providecommand{\wt}[1]{W^{#1}}
\providecommand{\but}[1]{\mathbb{U}^{#1}}
\providecommand{\bvt}[1]{\mathbb{V}^{#1}}
\providecommand{\bwt}[1]{\mathbb{W}^{#1}}
\providecommand{\ball}[1]{B_{f\!, \rho}\pa{#1}}
\providecommand{\xvy}{\xv^*_{\yv}}
\renewcommand{\tv}{\mathsf{TV}}

%%%%%%%%%%%%%%%%%%%%%%%%%%%%%%%%%%%%%%%%%%%%%%%%%%%%%%%%%%%%%%%%%
%%%%%%%% PROOF OF MINIMAX LOWER BOUND

\subsection{Proof for Theorem~\ref{thm:minimax}}
\label{proof:thm:minimax}
\newcommand{\simiid}{\overset{\text{iid}}{\sim}}
\newcommand{\risk}{\mathfrak{R}}
\renewcommand{\P}{\mathcal{P}}
\newcommand{\Dist}{\mathcal{M}}
\renewcommand{\tv}{\mathsf{TV}}

\textbf{Notation.}\quad First, we introduce some notation in order to simplify the presentation of the proof. Fix $\P = \P(\Xb, a, b)$ and $\pi\defeq m/n$, and for samples $\Xn = \{\Xv_1, \dots, \Xv_n\}$, let $\pr_n$ denote its empirical distribution. Let $\Dist^{HC}(\P, \pi)$ denote the set of all empirical distributions satisfying the Huber contamination model (\eref{eq:huber} in \cref{sec:auxiliary}) with fraction $\pi$, i.e., $\pr_n \in \Dist^{\text{HC}}(\P, \pi)$ if there exists $\pr \in \P$ and a probability distribution $\qr$ such that
\begin{align}
    \qty{\Xv_i: i \in [n]} \simiid \pr_{\pi, \qr} \qq{where} \pr_{\pi, \qr} = (1-\pi)\pr + \pi\qr.\label{eq:huber-sampling}
\end{align}
Following \citet[Section~2.2]{bateni2020confidence}, let $\Dist^{\text{HDC}}(\P, \pi)$ denote the set of all empirical distributions satisfying the \textit{Huber deterministic contamination} model, where ${\pr_n \in \Dist^{\text{HDC}}(\P, \pi)}$ if there exists $\pr \in \P$, a distribution $\qr$, and a set $\mathcal{O} \subset [n]$ such that
\begin{align}
    \abs{\mathcal{O}} = n\pi\qc{} 
    \Xnm = \qty{\Xv_i: i \in \mathcal{O}^c} \simiid \pr,\qq{and} 
    \Ym = \qty{\Xv_i: i \in \mathcal{O}} \simiid \qr.\label{eq:huber-deterministic-sampling}
\end{align}
Finally, let $\Dist^{\,\scr{S}}(\P, \pi)$ denote the set of all empirical distributions satisfying sampling setting \ref{setting}, where $\pr_n \in \Dist^{\,\scr{S}}(\P, \pi)$ if there exists $\pr \in \P$ and a set $\mathcal{O} \subset [n]$ such that
\begin{align}
    \abs{\mathcal{O}} = n\pi\qc{} 
    \Xnm = \qty{\Xv_i: i \in \mathcal{O}^c} \simiid \pr,\qq{and} 
    \Ym= \qty{\Xv_i: i \in \mathcal{O}} \sim \qr_{\mathcal{O}},\label{eq:sampling}
\end{align}
where $\qr_{\mathcal{O}}$ is an arbitrary joint distribution on $\Ym$. Let $\widehat\D_n$ be an estimator of the persistence diagram based on the sample $\Xn$, and let $\risk\qty\big(\widehat{\D}_n, \Dist(\P, \pi))$ be the worst-case risk under the contamination model $\Dist(\P, \pi)$, given by
$$
\risk\qty\big(\widehat\D_n, \Dist(\P, \pi)) \defeq \sup_{\pr_n \in \Dist(\P, \pi)} \E_{\pr} \qty[\winf\qty(  \widehat\D_n, \dgm(\bbv[\Xb]) )].
$$
Note that $\risk_{n, m}(\mathcal{P}) = \inf_{\widehat{\D}_n}\risk\qty\big(\widehat\D_n, \Dist^{\scr{S}}(\P, \pi))$. The proof of Theorem~\ref{thm:minimax} requires the following bound on the minimax risk under the Huber contamination model.
\begin{lemma}\label{lem:lower-bound}
    For $\pi \in (0, 1)$ and $\P = \P(\bX, a, b)$, let $\Dist^{\text{HC}}(\P, \pi)$ be the Huber contamination model in \eref{eq:huber-sampling}. Then, there exist absolute constants $C, c > 0$ such that
    \begin{align}
        \inf_{\widehat\D_n}\sup_{\pr_n \in \Dist^{\text{HC}}(\P, \frac{\pi}{2})} \pr\qty{ \winf\qty(  \widehat\D_n, \dgm(\bbv[\Xb]) ) \ge C \qty(\qty(\frac{\pi}{2-\pi})^{1/b} \vee \qty(\frac{\log{n}}{n})^{1/b}) } \ge c.\label{eq:lower-bound}
    \end{align}
\end{lemma}
The proof of \cref{lem:lower-bound} is deferred to \cref{proof:lem:lower-bound}. We now proceed with the proof of Theorem~\ref{thm:minimax}.

\textbf{Proof.}\quad Let $\risk_n^\circ(\P) = \risk_{n, 0}(\P)$ be the minimax lower bound in the absence of any contamination, i.e., when $m=0$. From Theorem~4 and Theorem~5 of \cite{chazal2015convergence},
\begin{align}
    \risk_n^{\circ}(\P) \gtrsim \qty(\frac{\log{n}}{n})^{1/b}.\label{eq:minimax-chazal}
\end{align}
For the claim in \eref{eq:minimax-n}, we want to show that
\begin{align}
    \risk_{n,m}(\P) \gtrsim \qty(\frac{\pi}{2-\pi})^{1/b} \vee \qty(\frac{\log{n}}{n})^{1/b} =: r_n(\pi).\label{eq:minimax-0}
\end{align}
We first note that it suffices to show \eref{eq:minimax-0} holds for all $\pi > \log{n}/n$. Indeed, if $\pi \le \log{n}/n$, then 
\begin{align}
    \frac{\pi}{2-\pi} \le \frac{\log{n}}{2n- \log{n}} \le \frac{\log{n}}{n}.\label{eq:pi-logn}
\end{align}
Moreover, since the sampling setting \ref{setting} encompasses the scenario where the samples $\Xn$ are obtained i.i.d. from $\pr \in \P$ (i.e., $\qty{\Xv_i: i \in \mathcal{O}} \sim \pr$), when $\pi \le \log{n}/n$, the bound in \eref{eq:minimax-0} follows trivially from \eref{eq:minimax-chazal} and \eref{eq:pi-logn}, i.e.,
\begin{align}
    \risk_{n,m}(\P) \gtrsim \risk_n^\circ(\P) \gtrsim \qty(\frac{\log{n}}{n})^{1/b} = r_n(\pi).\label{eq:minimax-small-pi}
\end{align}
Therefore, we assume $\log{n}/n < \pi < 1/2$ in the remainder of the proof.  

From \eref{eq:huber-deterministic-sampling} and \eref{eq:sampling} it is clear that $\Dist^{\text{HDC}}(\P, \pi) \subseteq \Dist^{\,\scr{S}}(\P, \pi)$, and
$$
\risk_{n, m}(\mathcal{P}) = \inf_{\widehat\D_n}\risk\qty\big(\widehat\D_n, \Dist^{\,\scr{S}}(\P, \pi)) \ge \inf_{\widehat\D_n}\risk\qty\big(\widehat\D_n, \Dist^{\text{HDC}}(\P, \pi)).
$$ 
Furthermore, from Proposition~1 of \cite{bateni2020confidence}, for all $r > 0$ we have
\begin{align}
    \inf_{\widehat\D_n}\risk\qty\big(\widehat\D_n, \Dist^{\text{HDC}}(\P, \pi)) \ge \qty(\inf_{\widehat\D_n}\sup_{{\pr_n\in\!\Dist^{\text{HC}}(\P, \frac{\pi}{2})}} r\pr\qty{ \winf\qty(  \widehat\D_n, \dgm(\bbv[\Xb]) ) > r }) - re^{-{n\pi}/{6}},\label{eq:minimax-1}
\end{align}
Note that first term on the right hand side of \eref{eq:minimax-1} is lower bounded by Lemma~\ref{lem:lower-bound}. Therefore, setting $r = C r_n(\pi)$ in \eref{eq:minimax-1} we get
\begin{align}
    \risk_{n,m}(\P) \ge Cr_n(\pi)\qty(c - e^{-n\pi/6}) \ge Cr_n(\pi)\qty(c - e^{-\log{n}/6}),\label{eq:minimax-2}
\end{align}
where the final inequality follows by noting that $e^{-n\pi/6} \le e^{-\log{n}/6}$ for $\pi > \log{n}/n$. For sufficiently large\footnote{It suffices for $n \ge (2/c)^{6}$.}~$n$, it follows that
\begin{align}
    \risk_{n,m}(\P) \ge \frac{C}{2} \cdot r_n(\pi) = \frac{C}{2} \cdot \qty(\frac{\pi}{2-\pi})^{1/b} \vee \qty(\frac{\log{n}}{n})^{1/b}.\label{eq:minimax-large-pi}
\end{align}
Combining the bound in \eref{eq:minimax-small-pi} for $\pi \le \log{n}/n$ and the bound in \eref{eq:minimax-large-pi} for $\pi > \log{n}/n$ gives the desired lower bound in \eref{eq:minimax-0}.

For the bound in \eref{eq:minimax-rate}, note that when $m = cn^\e$ for $0 < \e < 1$ and $c > 0$, we have $2n - m \gtrsim n$, from which it follows that $m/(2n-m) \lesssim (1/n)^{1-\e}$. Therefore, for $\e > 0$ we have $(1/n)^{1-\e} \gg \log{n}/n$, and
\begin{align}
    \risk_{n,m}(\P) \gtrsim \qty(\frac{1}{n^{1-\epsilon}})^{1/b}.\label{eq:minimax-rate-1}
\end{align}
This completes the proof of Theorem~\ref{thm:minimax}. \QED

\subsubsection{Proof of Lemma~\ref{lem:lower-bound}}
\label{proof:lem:lower-bound}

\begin{proof}
    From Theorem~5.1 of \cite{chen2018robust} (see \cref{thm:chen} in \cref{sec:auxiliary}), there exist absolute constants $C> 0$ and $c \in (0, 1)$ such that
    \begin{align}
        \inf_{\widehat\D_n}\sup_{\pr_n \in \Dist^{\text{HC}}(\P, \frac{\pi}{2})} \pr\qty{ \winf\qty(  \widehat\D_n, \dgm(\bbv[\Xb]) ) \gtrsim r_n(0) \vee \omega\qty\big(\pi/2, \P, \Winf) } \ge c,
    \end{align}
    % where
    % \eq{
    %     r_n(\pi) \ge  r_n(0) \vee \omega\qty\Big(\pi/2, \P, \Winf),\label{eq:rn}
    % }
    where $r_n(0)$ is the minimax rate of convergence in the absence of any contamination, and $\omega(\pi/2, \P, \Winf)$ is the total-variation modulus of continuity of the bottleneck distance $\Winf$, given~by
    \eq{
        \omega\qty\big(\pi/2, \P, \Winf) = \sup\qty{ \Winf( \dgm(\Xb_1), \dgm(\Xb_2) ) : \tv(\pr_1, \pr_2) \le \frac{{\pi}/{2}}{1 - {\pi}/{2}} },\nn
    }
    for $\Xb_i = \supp(\pr_i)$. See, also, \cref{def:modulus-of-continuity}. From \eref{eq:minimax-chazal} and by standard protocol for reduction of the minimax risk using Markov's inequality (see, e.g., Chapter~2.2 of \citealp{tsybakov2008nonparametric}), we have ${r_n(0) = \risk_n^\circ(\P) \gtrsim (\log{n}/n)^{1/b}}$, and it remains to establish a lower bound for $\omega(\pi/2, \P, \Winf)$. 
    
    To this end, for $b \le d$ let $B_b(r)$ denote the $b$-dimensional closed ball of radius $r$ embedded in $\R^d$ and centered at the origin, i.e.,
    $$
    B_b(0, r) = \qty{ \xv = (\yv, \zerov_{d-b}) \in \R^d: \norm{\yv} \le r}.
    $$ 
    Further, let $\pr_1 = \textup{Unif}(B_b(1))$ and $\pr_2 = \textup{Unif}( B_b(1) \setminus B_b(r) )$ for some $r \in (0, 1)$, i.e., $\pr_2$ is the uniform distribution on the spherical shell. The total variation distance between $\pr_1$ and $\pr_2$ is given by
    \eq{
        \tv(\pr_1, \pr_2) 
        &= \int_{B_b(1)} \abs{ \frac{\mathbf{1}\qty{\xv \in B_b(1)}}{\text{vol}(B_b(1))} -  \frac{\mathbf{1}\qty{\xv \in B_b(1)\setminus B_b(r)}}{\text{vol}\qty(B_b(1)\setminus B_b(r))}} d\xv \nn\\
        &= r^b \cdot \abs\Big{1 - 0} + (1-r^b) \cdot \abs{\frac{1}{1-r^b} - 1} = r^b.\nn
    }
    Moreover, by construction, $\Xb_2 = B_b(1) \setminus B_b(0)$ has a $b$-dimensional hole of radius $r$ in the center whereas $\Xb_1 = B_b(1)$ has trivial order-$b$ homology. In other words, the $b$-th order persistence diagram $\dgm_b(\Xb_1)$ is empty and $\dgm_b(\Xb_2)$ has a single point at $(0, r)$, and, 
    $$
    \Winf(\dgm_b(\Xb_1), \dgm_b(\Xb_2)) = r.
    $$ 
    Therefore,
    \eq{
        \omega\qty\big(\pi/2, \P, \Winf) = \sup\qty{ r: r^b \le \frac{\pi/2}{1-\pi/2}} = \qty(\frac{\pi/2}{1 - \pi/2})^{1/b} = \qty( \frac{\pi}{2-\pi} )^{1/b}.\nn
    }
\end{proof}

%%%%%%%%%%%%%%%%%%%%%%%%%%%%%%%%%%%%%%%%%%%%%%%%%%%%%%%%%%%%%%%%%
%%%%%%%% PROOF OF MOM LEMMA

\subsection{Proof for Lemma~\ref{lemma:lipschitz}}
\label{proof:lemma:lipschitz}

We begin by noting that for each $q \in [Q]$, the distance function $\dsf_{n, S_q}$ associated with the block $S_q$ is $1-$Lipschitz \cite[Chapter~9.1]{boissonnat2018geometric}. Thus, for each $q \in [Q]$ and for all $\xv, \yv \in \R^d$ we have that
\eq{
    0 \le \dsf_{n,q}(\xv) \le \dsf_{n,q}(\yv) + \norm{\xv-\yv},\nn
}
and, therefore, it follows that
\eq{
    \median\pb{ \dsf_{n,q}(\xv) : q \in [Q] } \le \median\pb{ \dsf_{n,q}(\yv) : q \in [Q] } + \norm{\xv-\yv}.\nn
}
As a result, we obtain that $\dnq(\xv) \le \dnq(\yv) + \norm{\xv-\yv}$. Exchanging $\xv$ and $\yv$ in the steps above yields the desired result. \QED

%%%%%%%%%%%%%%%%%%%%%%%%%%%%%%%%%%%%%%%%%%%%%%%%%%%%%%%%%%%%%%%%%
%%%%%%%% PROOF OF MOM SUBLEVEL

\renewcommand{\rqt}{\xi_{q}({2}t; n, Q)}

\subsection{Proof of Theorem~\ref{theorem:momdist-sublevel}}
\label{proof:theorem:momdist-sublevel}

For the sake of brevity, we use $\dgm\pa{f}$ to denote $\dgm(\bV[][][f])$. First, we note from the stability of persistence diagrams that,
\eq{
    \pr\qty\bigg{\Winf\qty\Big({\dgm\pa{\dnq}, \dgm\pa{\dx}}) > {2}t} \le \pr\pb{\norminf{\dnq - \dx} > {2}t}.
    \label{mom-concentration-stability}
}
Therefore, it suffices to control the probability of the event $\pb{\norminf{\dnq - \dx} > {2}t}$. To this end, let $A = \pb{q\in [Q]: S_q \cap \Ym = \varnothing}$ be the blocks which contain no outliers. From the assumption on $Q$, i.e., $2m < Q < n$, it follows that, and $\abs{A}  > Q/2$. For $q \in [Q]$, let $\rqt$ be given by
\eq{
    \rqt = \mathds{1}\qty\Big( { \norminf{\mathsf{d}_{n, q} - \dx} } > {2}t ).\nn
}
On application of Lemma~\ref{lemma:mom} to the estimator $\dnq$, it follows that
\eq{
    \pr\qty\bigg{{ \norminf{\mathsf{d}_{n, Q} - \dx} } > {2}t} \le \pr\pa{\sum_{q \in A}\rqt > \f Q 2 - m}.
    \label{eq:momdist-sublevel1}
}

Since $S_q \subseteq \Xnm$ for all $q \in A$, it follows that $\qty{\rqt: q \in A}$ are i.i.d.~$\textup{Ber}\qty\big(p({2}t; n, Q))$ random variables, where 
\eq{
    p({2}t; n, Q) = \E\qty(\rqt) = \pr\qty\Big( { \norminf{\mathsf{d}_{n, q} - \dx} } > {2}t ).\nn
}
For the remainder of the proof we need two key ingredients: (i) we need an upper bound for  $\E(\rqt)$, and (ii) we need a tight bound for the binomial tail probability in \cref{eq:momdist-sublevel1}.

\textbf{Bound for $p({2}t; n, Q)$.} From \citet[Theorem~2]{chazal2015convergence}, under the $(a, b)-$standard condition it follows that
\eq{
    p({2}t; n, Q) \le \f{2^b}{at^b}\exp({ -\nq at^b }) = \exp({ -\nq at^b - \log\qty\big(at^b) + b\log2 }).
    \label{eq:momdist-pt}
}

\textbf{Binomial tail probability bound.} For $0 < \e < 1$, using the Chernoff-Hoeffding bound from Lemma~\ref{lemma:chernoff-hoeffding} yields,
\eq{
    \pr\pa{\f{1}{\abs{A}}\sum_{q \in A}\rqt > \e} \le \exp\qty\Bigg( \abs{A} \pa{\f{2}{e} + \e \log p({2}t; n, Q)}).\nn
}
Using the bound for $p({2}t; n, Q)$ from \cref{eq:momdist-pt}, we obtain
\eq{
    \pr\pa{\f{1}{\abs{A}}\sum_{q \in A}\rqt > \e} &\le \exp\qty\Bigg( \abs{A} \qty\Big( \f{2}{e} + b\e\log2 - \e \nq at^b - \e \log\!\qty\big(at^b)) )\n
    &\le \exp\qty\Bigg( \abs{A} \qty\Big( 1 + b\e - \e \nq at^b - \e \log\!\qty\big(at^b)) )\n
    &\le \exp\qty\bigg( \abs{A} \qty\Big( 1 + b\e - \e\Ot) ),\nn
}
where, in the last line we use $\Ot \defeq (n/Q)at^b + \log(at^b)$ for brevity. When $t$ satisfies the condition that
\eq{
    \Ot \ge \f{2(1+b\e)}{\e},% \ \ \  \Longrightarrow \ \ \  1 + b\e - \e\Ot \le -\f\e2 \Ot,
    \label{eq:Ot-condition1}
}
then it implies that
\eq{
    1 + b\e - \e\Ot \le -\f\e2 \Ot,\nonumber
}
and we get
% Therefore, when \cref{eq:Ot-condition1} is satisfied we obtain
\eq{
    \pr\pa{\f{1}{\abs{A}}\sum_{q \in A}\rqt > \e} \le \exp\qty\Bigg( -\f{\abs{A}\e}{2} \Ot ).\nn
}

By setting $\delta$ equal to the r.h.s. of the inequality above, we obtain
\eq{
    \Ot = \f{2\log(1/\delta)}{\abs{A}\e}.
    \label{eq:Ot-Ae}
}
When $\delta \le e^{-(1+b)Q}$, using the fact that $Q > \abs{A}$ and $0 < \e < 1$, it follows that
\eq{
    \Ot = \f{2\log(1/\delta)}{\abs{A}\e} \ge \f{2(1+b)Q}{\abs{A}\e} \ge \f{2(1+b\e)}{\e},\nn
}
and, therefore, the condition in \cref{eq:Ot-condition1} is satisfied. Consequently, for $\delta \le e^{-(1+b)Q}$, on rearranging the terms in \cref{eq:Ot-Ae} we obtain 
\eq{
    \pr\pa{\sum_{q \in A}\rqt > \f{2\log(1/\delta)}{\Ot}} \le \delta. 
    \label{eq:momdist-sublevel-bound1}
}

Comparing \cref{eq:momdist-sublevel1} with \cref{eq:momdist-sublevel-bound1} we conclude that
\eq{
    \pr\pa{\sum_{q \in A}\rqt > \f{Q-2m}{2}} = \pr\pa{\sum_{q \in A}\rqt > \f{2\log(1/\delta)}{\Ot}} \le \delta,\nn
}
by setting
\eq{
    \f{2\log(1/\delta)}{\Ot} = \f{Q-2m}{2} \Longleftrightarrow \Ot = \f{4{\log(1/\delta)}}{Q-2m}.\nn
}
Since $\Ot = \nq at^b + \log(at^b)$, this is equivalent to
\eq{
    \exp( \nq a t^b ) \nq at^b = \nq \exp({ \f{4{\log(1/\delta)}}{Q-2m} }).\nn
}
Moreover, using the fact that the Lambert $\wo$ function satisfies $\wo(x)e^{\wo(x)} = x$ \citep{hassani2005approximation}, we obtain that
\eq{
    t = \qty\Bigg( \f{Q}{an} \wo\qty( \nq \exp{ \f{4{\log(1/\delta)}}{Q-2m} })) ^{1/b}.
    \label{eq:momdist-t-constraint}
}
Therefore, from \cref{mom-concentration-stability} and (\ref{eq:momdist-sublevel1}), for $t$ satisfying \cref{eq:momdist-t-constraint} and for all $\tau \ge t$ we have
\eq{
    \pr\qty\bigg{\Winf\qty\Big({\dgm\pa{\dnq}, \dgm\pa{\dx}}) > {2}\tau} \le \pr\qty\bigg{\Winf\qty\Big({\dgm\pa{\dnq}, \dgm\pa{\dx}}) > {2}t} \le \delta.
    \label{eq:momdist-sublevel2}
}
Since $\delta \le e^{-(1+b)Q}$, observe that
\eq{
    \f{4\log(1/\delta)}{Q-2m} \ge \f{4(1+b)Q}{Q-2m} \ge 4(1+b) > 1.\nonumber
}
Furthermore, using the fact that $\wo(z) \le \log(z)$ for $z > e$ \citep{hassani2005approximation}, we may take $\tau$ to be 
\eq{
    t = \qty\Bigg( \f{Q}{an} \wo\qty( \nq \exp{ \f{4{\log(1/\delta)}}{Q-2m} })) ^{1/b} &\le \qty\Bigg( \f{Q}{an} \log\qty( \nq \exp{ \f{4{\log(1/\delta)}}{Q-2m} })) ^{1/b}\n
    &= \qty\Bigg( \f{Q \log(n/Q)}{an} + \f{4Q{\log(1/\delta)}}{a(Q-2m)n} ) ^{1/b} \defeq \tau.\nn
    % &\num{\le}{i} C\qty\Bigg( \f{Q \log(n/Q)}{an})^{1/b} + C\qty\Bigg( \f{4Q{\log(1/\delta)}}{a(Q-2m)n} ) ^{1/b} \defeq \tau,\nn
}
% where, for $C = \max\qty{1, 2^{1/b -1}}$, (i) follows from Lemma~\ref{lemma:useful-inequalities}~(iii) when $b\ge 1$, or Lemma~\ref{lemma:useful-inequalities}~(i) when $b<1$. 
Plugging this into \cref{eq:momdist-sublevel2}, we obtain the desired result. 

For the second claim in the theorem, by inverting the relationship between $t$ and $\delta$ in \cref{eq:momdist-t-constraint} and using the fact that $\wo(z)$ is an increasing function for $z > 0$, observe that the constraint on $\delta$ equivalently specifies a constraint on $t$, i.e.,
\eq{
    \delta \le e^{-(1+b)Q} \Longleftrightarrow t \ge \qty\Bigg( \f{Q}{an} \wo\qty( \nq \exp{ \f{4{(1+b)Q}}{Q-2m} })) ^{1/b}.\nn
}
A sufficient condition for this to hold is that
\eq{\label{eq:t-bound}
    t \ge t(n, Q) \defeq \qty\Bigg( \f{Q \log(n/Q)}{an} + \f{4(1+b)Q^2}{a(Q-2m)n} )^{1/b}.\nonumber
}
Therefore, from \cref{eq:momdist-sublevel2} we have that for all $t \ge t(n, Q)$
\eq{
    \pr\qty\bigg{\Winf\qty\Big({\dgm\pa{\dnq}, \dgm\pa{\dx}}) > {2}t} \le \exp( - \qty(\f{Q-2m}{4})\Ot ).\nn
}
By taking $\pr\qty\big{\Winf\qty\Big({\dgm\pa{\dnq}, \dgm\pa{\dx}}) > {2}t}$ to be its maximum value of $1$ in the interval $[0, {2}t(n, Q)]$ we have 
\eq{
    \E\qty[ \Winf\qty\Big({\dgm\pa{\dnq}, \dgm\pa{\dx}}) ] &= {2}\int\limits_0^\infty\pr\qty\bigg{\Winf\qty\Big({\dgm\pa{\dnq}, \dgm\pa{\dx}}) > {2}t} dt\n
    &\le {2}t(n, Q) + {2}\int\limits_{{2}t(n, Q)}^\infty  \exp( - \qty(\f{Q-2m}{4})\Ot ) dt.\n
    &\le {2}t(n, Q) + {2}\int\limits_{{2}t(n, Q)}^\infty  \exp( - \f{\Ot}{4} ) dt.\nonumber
}

{
Let $w = \Ot$, i.e., $t = \qty(\frac{Q}{an}\wo(\frac{n}{Q}e^w))^{1/b}$. By noting that $\Omega'(t, n/Q) = b(w+1)/t$, and that $t \ge 2t(n, Q)$ implies that $w = \Ot \ge {8}(1+b)/(Q-2m)$ by \cref{eq:t-bound}, the change of variables from $t$ to $w$ in the integral gives
\eq{
    \int\limits_{t \ge {2}t(n, Q)} \exp( - \f{\Ot}{4} ) dt 
    &\le \int\limits_{w \ge \frac{{8}(1+b)}{(Q-2m)}} e^{-w/4} \cdot \frac{\qty(\frac{Q}{an}\wo(\frac{n}{Q}e^w))^{1/b}}{b(w+1)} dw\n
    &\lesssim \pa{\f{Q}{n}}^{1/b} \int\limits_{r_n}^{\infty} \f{e^{-w/4}\pa{\wo\pa{ \f{n}{Q} e^w }}^{1/b}}{w+1}dw,
}
where $r_n \defeq {8}(1+b)/(Q-2m)$ and the factor $ba^{1/b}$ is absorbed into the symbol $\lesssim$. Therefore, we have that
}
\eq{\label{eq:momdist-ebound1}
    &\E\qty\Big[ \Winf\qty\big({\dgm\pa{\dnq}, \dgm\pa{\dx}}) ] \n
    &\qq{} \qq{} \lesssim t(n, Q) + \pa{\f{Q}{n}}^{1/b} \int\limits_{r_n}^{\infty} \f{e^{-w/4}\pa{\wo\pa{ \f{n}{Q} e^w }}^{1/b}}{w+1}dw\nn\\[5pt]
    &\qq{} \qq{} \num{\lesssim}{ii} t(n, Q)%
    + \qty( \f{\log(n/Q)}{n/Q})^{1/b} \underbrace{\int\limits_{r_n}^\infty\f{e^{-w/4}}{w+1}dw}_{\circled{a}} %
    + \pa{\f{Q}{n}}^{1/b} \underbrace{\int\limits_{r_n}^\infty\f{e^{-w/4}w^{1/b}}{w+1}dw}_{\circled{b}},
}
where (ii) follows from the fact that $\wo(z) \le \log(z)$ for $z > e$ together with, either, an application of Lemma~\ref{lemma:useful-inequalities}~(iii) when $b\ge 1$, or Lemma~\ref{lemma:useful-inequalities}~(i) with the additional factor $2^{1/b - 1}$ being absorbed in the symbol $\lesssim$ when $b<1$. The term $\circled{a}$ can be bounded above using the incomplete $\Gamma$ function as,
\eq{
    \circled{a} = \int\limits_{r_n}^\infty\f{e^{-w/4}}{w+1}dw = e^{1/4} \int_{(r_n+1)/4}^\infty v^{-1}e^{-v}dv = e^{1/4} \Gamma\qty\big(0, (r_n+1)/4) < \infty. \nn
}
Similarly, using the fact that $w+1 > 1$, the term $\circled{b}$ may be bounded above as,

{
\eq{
    \circled{b} = \int\limits_{r_n}^\infty\f{e^{-w/4}w^{1/b}}{w+1}dw \le \int\limits_{r_n}^\infty{e^{-w/4}w^{1/b}}dw \le \int\limits_{0}^\infty{e^{-w/4}w^{1/b}}dw \le \f{\Gamma(1 + b\inv)}{4^{1+1/b}} < \infty,\nn
}
}
Therefore, the inequality in \cref{eq:momdist-ebound1} becomes
\eq{
    \E\qty[ \Winf\qty\Big({\dgm\pa{\dnq}, \dgm\pa{\dx}}) ] \lesssim \qty\bigg(\f{\log(n/Q)}{n/Q} + \f{Q^2}{(Q-2m)n} )^{1/b} + \qty\bigg(\f{Q}{n})^{1/b}.\nn
}
When the number of outliers grows with $n$ as $m_n = cn^\e$ where $0 \le \e < 1$, {let the number of blocks be $Q_n = 3c n^\beta$,} where $\e \le \beta < 1$. Therefore, 
\eq{
    \E\qty[ \Winf\qty\Big({\dgm\pa{\dnq}, \dgm\pa{\dx}}) ] &\lesssim \inf_{\e \le \beta < 1} \qty\bigg(\f{\log(n/n^\beta)}{n/n^\beta} + \f{n^{2\beta}}{(3n^\beta-2n^\e)n} )^{1/b} + \qty\bigg(\f{n^\beta}{n})^{1/b}\n[5pt]
    &\lesssim \qty( \f{\log n}{n^{1-\e}} )^{1/b},\nn
}
which gives us the desired result when $Q_n = 3cn^\e$. \QED

%%%%%%%%%%%%%%%%%%%%%%%%%%%%%%%%%%%%%%%%%%%%%%%%%%%%%%%%%%%%%%%%%
%%%%%%%% STABILITY

\subsection{Proof of Theorem~\ref{theorem:momdist-stability}}
\label{proof:theorem:momdist-stability}

We begin by establishing the following result:
\eq{
    \Winf\qty\bigg( \bVt[  ][\Xn, \dnq], \bVt[  ][\Xnm, \dnq] ) \le \adjustlimits\sup_{\xv \in \Xnm}\dnq(\xv) + \qty( 1 - \f1p )t(\Xnm).\nn
}
Observe that from Lemma~\ref{lemma:ab-filtration} and Lemma~\ref{lemma:ab-module}, it suffices to show that for every $\yv \in \Ym$ the MoM-Dist function $\dnq$ satisfies the property that
\eq{
    \inf_{\xv \in \Xnm}\norm{\xv - \yv} \le \dnq(\yv).\nn
}
To this end, let $A = \pb{ q \in [Q] : S_q \cap \Ym = \varnothing }$ be the blocks containing no outliers. For $\yv \in \Ym$ and every $q \in A$, we have that $S_q \subseteq \Xnm$, and therefore
\eq{
    \inf_{\xv \in \Xnm} \norm{\xv - \yv} \le \inf_{\xv \in S_q}\norm{\xv - \yv} = \mathsf{d}_{n,q}(\yv).\nn
}
Since this holds for every $q \in A$, taking the infimum on the right-hand side over $A$ yields
\eq{
    \inf_{\xv \in \Xnm} \norm{\xv - \yv} \le \inf_{q \in A} \mathsf{d}_{n,q}(\yv).\nn
}
Since $2m < Q$ by assumption, using the pigeonhole principle we further have that
{
\eq{
    \inf_{q \in A} \mathsf{d}_{n,q}(\yv) \le \med\qty\Big{ \mathsf{d}_{n, q}(\yv) : q \in [Q] },\nn
}
}
which implies that $\inf_{\xv \in \Xnm}\norm{\xv - \yv} \le \dnq(\yv)$ for every $\yv \in \Ym$. Therefore, taking $a=0$ in Lemma~\ref{lemma:ab-filtration} and Lemma~\ref{lemma:ab-module} we obtain
\eq{
    \Winf\qty\bigg( \bVt[  ][\Xn, \dnq], \bVt[  ][\Xnm, \dnq] ) \le \adjustlimits\sup_{\xv \in \Xnm}\dnq(\xv) + \qty( 1 - \f1p )t(\Xnm).
    \label{eq:stability-1}
}

Turning our attention to the quantity appearing in the statement of the theorem, note that an application of the triangle inequality yields
\eq{
    \Winf\qty\bigg( \bVt[  ][\Xn, \dnq], \bVt[  ][\Xnm, \dsf_{n-m}] ) &\le \Winf\qty\bigg( \bVt[  ][\Xn, \dnq], \bVt[  ][\Xnm, \dnq] ) \n
    &\quad\quad+ \Winf\qty\bigg( \bVt[  ][\Xnm, \dnq], \bVt[  ][\Xnm, \dsf_{n-m}] )\n
    &\num{\le}{$\star$} \adjustlimits\sup_{\mathclap{\xv \in \Xnm}}\dnq(\xv) + \qty( 1 - \f1p )t(\Xnm) + \norminf{\dnq - \dsf_{n-m}},\nn
}
where the first term in ($\star$) follows from \cref{eq:stability-1} and the last term follows from Proposition~\ref{lemma:anai-et-al}. This gives us the desired result. Furthermore, when $p=1$ note that $1 - 1/p = 0$, giving us the tighter bound in this case. \QED

%%%%%%%%%%%%%%%%%%%%%%%%%%%%%%%%%%%%%%%%%%%%%%%%%%%%%%%%%%%%%%%%%
%%%%%%%% CONSISTENCY

\subsection{Proof of Theorem~\ref{theorem:momdist-consistency}}
\label{proof:theorem:momdist-consistency}

We begin by noting that $\bvt{}[\bX] = \bvt{}[\bX, \dx]$. Indeed, the distance function $\dx(\xv) = 0$ for all $\xv \in \bX$. We may further conclude that
\eq{
    \sup_{\xv \in \bX}\dx(\xv) = 0.
    \label{eq:supdist=0}
}
The bottleneck distance between $\bVt[  ][\Xnm \cup \Ym, \dnq]$ and $\bvt{}[\bX]$ may be bounded above as
\eq{
    \Winf\qty\bigg( \bVt[  ][\Xnm \cup \Ym, \dnq], \bvt{}[\bX] ) &\le \Winf\qty\bigg( \bVt[  ][\Xnm \cup \Ym, \dnq], \bVt[  ][\Xnm, \dnq] ) && =:\circled{a} \n
    &\quad+ \Winf\qty\bigg( \bVt[  ][\Xnm, \dnq], \bVt[  ][\Xnm, \dx]) && =:\circled{b} \n
    &\quad+ \Winf\qty\bigg( \bVt[  ][\Xnm, \dx], \bVt[  ][\bX, \dx] ). && =:\circled{c} \nn
}

When $p=1$, the terms $\circled{b} \le \norminf{\dnq - \dx}$ and $\circled{c} \le \mathsf{H}(\Xnm, \bX)$ using Proposition~\ref{lemma:anai-et-al}. The term $\circled{a}$ is bounded above by taking $p=1$ in \cref{eq:stability-1} (from the proof of Theorem~\ref{theorem:momdist-stability}) to give
\eq{
    \circled{a} &= \sup_{\xv \in \Xnm}\dnq(\xv) \n
    &\num{\le}{$\star$} \sup_{\xv \in \bX}\dnq(\xv)\n
    &\num{\le}{$\dagger$} \norminf{\dnq - \dx} + \sup_{\xv \in \bX}\dx(\xv)\n
    &\num{=}{$\ddagger$} \norminf{\dnq - \dx},\nn
}
where ($\star$) follows from the fact $\Xnm \subset \bX$, ($\dagger$) uses the identity $f(\xv) \le \norminf{f - g} + g(\xv)$ for all $\xv \in \bX$, and ($\ddagger$) follows from \cref{eq:supdist=0}. Plugging in the bounds for the bottleneck distance we obtain
\eq{
    \Winf\qty\bigg( \bVt[  ][\Xnm \cup \Ym, \dnq], \bvt{}[\bX] ) \le 2\norminf{\dnq - \dx} + \mathsf{H}(\Xnm, \bX).\nn
}
By noting that the Hausdorff distance $\mathsf{H}(\Xnm, \bX) = \norminf{\dsf_{n-m} - \dx}$, for $t_1, t_2$ such that $t_1 + t_2 = t$ we may bound the tail probability for the bottleneck distance as follows.
\eq{
    &\pr\qty\Bigg{\Winf\qty\Big( \bVt[  ][\Xnm \cup \Ym, \dnq], \bvt{}[\bX] )  > {2}t}\\ 
    &\qquad\le \pr\qty\Big({ 2\norminf{\dnq - \dx} > {2}t_1 }) + \pr\qty\Big({ \norminf{\dsf_{n-m} - \dx} > {2}t_2 })\n
    &\qquad\le \delta_1 + \delta_2 = \delta,
    \label{eq:momdist-filt}
}
where the relationship between $\delta_1, \delta_2$ and $t_1, t_2$ is given by \cref{eq:momdist-t-constraint}, i.e., $\delta_1 \le e^{-(1+b)Q}$ from the condition in Theorem~\ref{theorem:momdist-sublevel}, $\delta_2 = \delta - \delta_1$,
\eq{
    t_1 = 2\qty\Bigg( \f{Q}{an} \wo\qty( \nq \exp{ \f{4{\log(1/\delta_1)}}{Q-2m} })) ^{1/b}, \ \ \text{ and } \ \ \ t_2 = \qty\Bigg( \f{1}{a{n-m}} \wo\qty\Big( {n-m} e^{ 4{\log(1/\delta_2)} })) ^{1/b}.\label{eq:t1_t2}
}
Furthermore, using the bound for the Lambert $\wo$ function ${\wo(z) \le \log z}$ for $z > e$, we have
\eq{
    t_1 \le 2\qty\bigg( \frac{Q\log(n / Q)}{an} + \frac{4Q \log(1/\delta_1)}{a(Q-2m)n} )^{1/b}, \ \ \text{  } \ \ \ t_2 \le \qty\bigg( \frac{\log (n-m)}{a (n-m)} + \frac{4 \log(1/\delta_2)}{a (n-m)} )^{1/b},\nonumber
}
and $t = t_1 + t_2 \le \mathfrak{f}(n, m, Q, \delta_1, \delta_2)$. Therefore, the bound in \cref{eq:momdist-filt} yields
{
    \eq{
    &\pr\Bigg\{\Winf\qty\Big( \bVt[  ][\Xnm \cup \Ym, \dnq], \bvt{}[\bX] )  \le {2}\mathfrak{f}(n, m, Q, a, b)\Bigg\} \n
    &\qquad \ge \pr\qty\Bigg{\Winf\qty\Big( \bVt[  ][\Xnm \cup \Ym, \dnq], \bvt{}[\bX] )  \le {2}t_1 + {2}t_2} \ge 1 - \delta,\nn
}
}
which gives the desired result. The second part of the theorem follows directly using the identical procedure as that used in the proof of Theorem~\ref{theorem:momdist-sublevel} in \cref{proof:theorem:momdist-sublevel}. \QED

%%%%%%%%%%%%%%%%%%%%%%%%%%%%%%%%%%%%%%%%%%%%%%%%%%%%%%%%%%%%%%%%%
%%%%%%%% SUBLEVEL APPROXIMATION

% \subsection{Proof of Proposition~\ref{prop:sublevel-2}}
% \label{proof:prop:sublevel-2}

\begin{remark}
    The result in Theorem~\ref{theorem:momdist-consistency} holds when $p=1$. For $p \ge 1$, from \cite[Proposition~3.6]{anai2019dtm} it follows that the number of points in the $0$th persistence diagram $\dgm(\bVt[  ][\Xn])$ is non-increasing in $p$, i.e., choosing $p > 1$ leads to sparser persistence diagrams, and has an appeal from a computational perspective. The following result characterizes the error incurred when using $V[\Xn, \dnq]$ to approximate the sublevel filtration $V[\dnq]$ for $p \ge 1$. In light of Remark~\ref{remark:stability}~(ii), the approximation error vanishes with increasing sample size. In contrast, the approximation error for the DTM-filtration is non-vanishing \cite[Proposition~4.6]{anai2019dtm}.
\end{remark}

\begin{proposition}\label{prop:sublevel-2}
    Let $p \ge 1$. For $\Xn = \Xnm \cup \Ym$ under sampling setting \ref{setting} and {${2m < Q < n}$,} the filtrations $V[\dnq]$ and $V[\Xn, \dnq]$ are $(\eta, \xi)-$interleaved, where
    \eq{
        \eta(t) =  2^{\ipfac}t + \sup_{\xv \in \Xnm}\dnq(\xv) \qquad\qq{and}\qquad \xi(t) = 2^{\ipfac}\eta(t).\nn
    }
\end{proposition}

\begin{proof}
We begin by noting from Lemma~\ref{lemma:sublevel-equivalence} that the filtrations $V[\dnq]$ and $V[\R^d, \dnq]$ are $(\id, \alpha)-$interleaved for ${\alpha : t \mapsto \tp t}$. Furthermore, consider the intermediate filtrations $V[\Xnm, \dnq]$. From Theorem~\ref{theorem:momdist-stability} and Lemma~\ref{lemma:ab-filtration} we have that $V[\R^d, \dnq]$ and $V[\Xnm, \dnq]$ are $(\eta, \id)-$ interleaved for 
$$
{\eta: t \mapsto \tp t + \sup_{\xv \in \Xnm}\dnq(\xv)}. 
$$
Using an identical argument, but reversing the order, we have that $V[\Xnm, \dnq]$ and $V[\Xn, \dnq]$ are $(\id,\eta)-$interleaved. We can now apply the ``\textit{triangle inequality}'' for generalized interleavings \cite[Proposition~3.11]{bubenik2015metrics} to obtain that $V[\dnq]$ and $V[\Xn, \dnq]$ are ${(\id \circ \eta \circ \id, \alpha \circ \id \circ \eta)-}$interleaved. On simplifying the interleaving maps, we obtain that for
$$
{\xi(t) = \alpha \circ \eta(t) = \tp\eta(t)},
$$
the two filtrations are $(\eta, \xi)-$interleaved.
\end{proof}

%%%%%%%%%%%%%%%%%%%%%%%%%%%%%%%%%%%%%%%%%%%%%%%%%%%%%%%%%%%%%%%%%
%%%%%%%% INFLUENCE

\subsection{Proof of Theorem~\ref{theorem:momdist-influence}}
\label{proof:theorem:momdist-influence}

% For the second part, we
We begin by observing that $\norminf{ \dsf_{n+m} - \dsf_n }$ can be bounded from below as follows:
\eq{
    \norminf{ \dsf_{n+m} - \dsf_n } &\ge \dsf_n(\xvo) - \dsf_{n+m}(\xvo)\n 
    &= \dsf_n(\xvo) - 0 \n
    &=\inf_{\xv \in \Xn}\norm{\xv - \xvo} \n
    &\ge\inf_{\xv \in \Xb}\norm{\xv - \xvo} = \dx(\xvo). \nn
}
Furthermore, for {$\delta/2 \le e^{-(1+b)Q}$} and $k \defeq \max\qty\big{1, {2^{\f{b-1}{b}}}}$, with probability greater than $1-\delta$,

{\eq{
    &\norminf{\dsf_{n+m,Q} - \dsf_n} \n 
    &\qq{} \num{\le}{i} \norminf{\dsf_{n+m,Q} - \dx} + \norminf{\dsf_n - \dx}\n
    &\qq{} \num{\le}{ii} {2}\qty\Bigg[ \f{1}{a\nQ}\wo\qty( \nQ \exp\qty{ \f{4\log(2/\delta)}{Q-2m} } ) ]^{1/b} + {2}\qty\Bigg[ \f{1}{an}\wo\qty( n \exp\qty{ 4\log(2/\delta) } ) ]^{1/b}\n
    &\qq{} \num{\le}{iii} \f{{2}k}{a^{1/b}}\qty[{ \f{1}{\nQ} \wo\qty( \nQ \exp\qty{ \f{4\log(2/\delta)}{Q-2m} } ) + \f{1}{n} \wo\qty( n \exp\qty{ 4\log(2/\delta) } )}]^{1/b}\n
    &\qq{} \num{\le}{iv} \f{{2}k}{a^{1/b}}\qty[{\f{\log\nQ}{\nQ}} + \f{\log n}{n} + 4\log(2/\delta)\qty( \f{1}{\nQ(Q-2m)} + \f{1}{n} ) ]^{1/b}\n
    &\qq{} \num{\le}{v} \f{{2}k2^{1/b}}{a^{1/b}}\qty(\f{\log\nQ + 4\log(2/\delta)}{\nQ})^{1/b} \defeq \et,\nn
}
}

where, for $\nQ = (n+m)/Q$, (i) is a consequence of the triangle inequality and (ii) follows from the proofs of Theorem~\ref{theorem:momdist-sublevel} and Theorem~\ref{theorem:momdist-consistency}, (iii) uses Lemma~\ref{lemma:useful-inequalities}, (iv) follows from the fact that $\wo(z) < \log(z)$ for $z > e$, and (v) uses the fact that $\nQ < n$ and $(Q-2m)\inv \le 1$ for {$Q > 2m$}.

Observe that if $2\et \le \dx(\xvo)$, then with probability greater than $1-\delta$,
\eq{
    \norminf{ \dsf_{n+m} - \dsf_n } - \norminf{\dsf_{n+m,Q} - \dsf_n} \ge \dx(\xvo) - \et \ge \et,
    \label{eq:momdist-influence-triangle}
}
and the result follows. Therefore, in order to establish the claim for the second part it suffices to check that ${2\et \le \dx(\xvo)}$ under conditions (I) and (II). To this end, note that
{\eq{
    \dx(\xvo) \ge 2\et \quad \Longleftrightarrow \quad \vp \ge \f{\log\nQ + 4\log(2/\delta)}{\nQ},\nn%\label{eq:vp-condition}
}
}
which is satisfied whenever $\delta$ satisfies  the r.h.s. of condition (II), i.e.,
{
\eq{
    \log(2/\delta) \le \f{\nQ\vp - \log\nQ}{4}.\nn
    % \log(2/\delta) \le \f{(n+m)\vp + Q\log Q - Q\log(n+m)}{4Q^2}.\nn
}
}
Furthermore, the l.h.s. of condition (II), i.e., {$\delta \le 2e^{-(1+b)Q}$}, is satisfied only when
{
\eq{
    (1+b) Q \le \f{\nQ\vp - \log\nQ}{4},\nn
}
}
or, equivalently, when condition (I) is satisfied:

{
\eq{
    \vp \ge \f{\log\nQ}{\nQ} + \f{4(1+b)Q}{\nQ}.\nn
}
}
The result now follows from \cref{eq:momdist-influence-triangle}. \QED

%%%%%%%%%%%%%%%%%%%%%%%%%%%%%%%%%%%%%%%%%%%%%%%%%%%%%%%%%%%%%%%%%
%%%%%%%% Lepski

\begingroup
\providecommand{\hQ}{\widehat{Q}}
\providecommand{\hm}{\widehat{m}}
\renewcommand{\ms}{m^*}
\providecommand{\h}{\mathfrak{h}}

\subsection{Proof of Theorem~\ref{theorem:lepski}}
\label{proof:theorem:lepski}

Let {$\js = \min\pb{ j \in \J : m(j) \ge \ms }$.} By definition of $\J$ we have that $\abs{\J} \le 1 + \log_\theta(\mmax/\mmin)$ and $m(\js) < \theta\ms$ for $\theta > 1$. The outline of the proof is as follows. First, we show that $\mathfrak{h}(n,m,\delta)$ is non-decreasing in $m$, from which it follows that $\mathfrak{h}(n,m(j),\delta) \le \mathfrak{h}(n,m(j+1),\delta)$. Next, we show that the event $\pb{ \jh \le \js }$ contains the event $\mathcal{E}$ given by
\eq{
    \mathcal{E} = \bigcap_{\qty{j \in \J : j \ge \js}}\qty\Big{ \winf\qty\big( \bbv_n(j), \bbv[\bX] ) \le \h(n, m(j), \delta) }.\nn
}
Then, using a standard procedure for obtaining the Lepski bound (e.g., Theorem~5.1 of \citealt{minsker2018sub} and Theorem~3.1 of \citealt{chen2020robust}), we show that the event $\mathcal{E}$, and, therefore the event $\pb{\jh \le \js}$, holds with probability at least $1 - \delta \log_\theta(\mmax/\mmin)$. Lastly, we use the bound on the event $\pb{\jh \le \js}$ to obtain the desired result. 

\textbf{1. Monotonicity of $\h(n, m, \delta)$ in $m$.} Consider the function $f(z; \alpha, \beta) = \alpha\wo(\beta z)/ z$ for fixed constants $\alpha, \beta > 0$. The derivative of $f$ is given by
\eq{
    f'(z; \alpha, \beta) = \f{d}{dz} \qty(\f{\alpha}{z}\wo(\beta z)) &= \alpha\pa{ \f{\beta}{z}\wo'(\beta z) - \f{1}{z^2}\wo(\beta z) }\nn\\[10pt]
    &\num{=}{i}\alpha\pa{ \f{\beta}{z}\pb{\f{\wo(\beta z)}{\beta z (1 + \wo(\beta z))}} - \f{1}{z^2}\wo(\beta z) }\nn\\[10pt]
    &= - \f{\alpha \wo(\beta z)^2}{z^2(1+\wo(\beta z))} < 0 \qq{for all} z > 0.\nn
}
Note that in (i) we have used the fact that the derivative of the Lambert $\wo$ function is given by $\wo'(z) = {\wo(z)}/{z(1+\wo(z))}$. Therefore, it follows that $f$ is non-increasing in $z$. The claim follows by noting that the function $\mathfrak{h}$ is given by
\eq{
    \h(n, m, \delta) = f( n/(2m+1); \alpha_1, \beta_1 )^{1/b} + f(n-m; \alpha_2, \beta_2)^{1/b},\nn
}
for constants $\alpha_1, \beta_1, \alpha_2, \beta_2 > 0$ not depending on $n$ or $m$.

\textbf{2. $\mathcal{E}$ is a subset of $\pb{\jh \le \js}$.} We begin by noting that since $\ms \le m(\js)$, it follows that $2\ms < Q(j) = 2 m(j) + 1$ for all $j \ge \js$ and satisfies the first condition for Theorem~\ref{theorem:momdist-consistency}. By taking 
$$
\delta_1 = e^{-2(1+b)(2\mmax + 1)} \le e^{-2(1+b)Q(j)},
$$ 
and {$\delta_2 = \delta - \delta_1$, note that $\h\qty\big(n, m(j), \delta) = {2}t_1 + {2}t_2$ for the two terms, $t_1, t_2$, appearing in \cref{eq:t1_t2} from the proof of Theorem~\ref{theorem:momdist-consistency} by taking $m=m(j)$ and $Q = Q(j) = 2m(j)+1$.} Therefore, we may use Theorem~\ref{theorem:momdist-consistency} to obtain
\eq{
    \pr\qty\bigg( \winf\qty\Big( \bbv_n(j), \bbv[\bX]) > \h(n, m(j), \delta) ) < \delta \qq{for all} j \ge \js.
    \label{eq:j-condition}
}
Furthermore, by definition of $\jh$, it follows that for all $j < \jh$, there exists at least one $i > j$ such that ${\winf\pa{ \bbv_n(i), \bbv_n(j) } > 2\h(n, m(i), \delta)}$. Therefore,
\eq{
    \pb{ \jh > \js }
    &\subseteq \bigcup_{\mathclap{\pb{ j \in \J : j > \js }}} \qty\Big{ \winf\qty\big( \bbv_n(j), \bbv_n(\js) ) > 2\h( n, m(j), \delta ) }\nn\\
    &\num{\subseteq}{ii} \bigcup_{\mathclap{\pb{ j \in \J : j > \js }}} \qty\Big{ \winf\qty\big( \bbv_n(j), \bbv[\bX])  > \h( n, m(j), \delta ) } \cup \qty\Big{ \winf\qty\big( \bbv_n(\js), \bbv[\bX]) > \h( n, m(\js), \delta ) }\nn\\
    &= \bigcup_{\mathclap{\pb{ j \in \J : j \ge \js }}} \qty\Big{ \winf\qty\big( \bbv_n(j), \bbv[\bX])  > \h( n, m(j), \delta ) } \defeq \mathcal{E}^c,\nn
}
where, in (ii) we have used the fact that $\h(n, m(\js), \delta) \le \h(n, m(j), \delta)$ for all $j > \js$, and
\eq{
    \qty\Big{ \winf\qty\big( \bbv_n(j), \bbv[\bX])  \le \h( n, m(j), \delta ) } \cap \qty\Big{ \winf\qty\big( \bbv_n(\js), \bbv[\bX]) \le \h( n, m(\js), \delta ) } \nn\\
    \subseteq \qty\Big{ \winf\qty\big( \bbv_n(j), \bbv_n(\js) ) \le 2\h( n, m(j), \delta ) }. \qq{} \qq{} \nn
}
By inverting the above inclusion we get the inclusion in (ii). Therefore, we obtain ${\mathcal{E} \subseteq \pb{ \jh \le \js }}$.

\textbf{3. Tail bound for the event $\mathcal{E}$.} Applying a union bound to (34), we obtain
\eq{
    \pr( \mathcal{E}^c ) &= \pr\qty( \bigcup_{\pb{ j \in \J : j \ge \js }} \qty\Big{ \winf\qty\big( \bbv_n(j), \bbv[\bX])  > \h( n, m(j), \delta ) } )\nn\\
    &\le \sum_{\pb{ j \in \J : j \ge \js }} \pr\qty\bigg( \winf\qty\big( \bbv_n(j), \bbv[\bX])  > \h( n, m(j), \delta ) )\nn\\
    &\num{\le}{iv} \sum_{\pb{ j \in \J : j \ge \js }} \!\!\!\delta\nn\\
    &\num{\le}{v} \delta \log_\theta\qty( \f{\theta\mmax}{\mmin} ),\nn
}
where (iv) follows from \cref{eq:j-condition} and (v) uses the fact that $\abs{\J} \le 1 + \log_\theta(\mmax/\mmin)$.

\textbf{4. Bound for $\winf(\bbv_n(\jh), \bbv[\bX])$.} We begin by noting that when the event $\mathcal{E}$ holds, we have that
\eq{
    \winf\qty(\bbv_n(\jh), \bbv[\bX]) &\le \winf\qty(\bbv_n(\jh), \bbv_n(\js)) + \winf\qty(\bbv_n(\js), \bbv[\bX])\nn\\
    &\num{\le}{vi} 2\h( n, m(\js), \delta ) + \h(n, m(\js), \delta)\nn\\
    &\num{\le}{vii} 3\h( n, \theta\ms, \delta ),\nn
}
where the first term in (vi) follows from the definition of $\jh$, which is guaranteed to hold because $\mathcal{E} \subseteq \pb{\jh \le \js}$, and the second term in (vi) follows from the definition of $\mathcal{E}$. The inequality in (vii) uses the fact that $m(\js) < \theta\ms$ and the fact that $h(n, m, \delta)$ is non-decreasing in $m$. Therefore, we have the inclusion
\eq{
    \mathcal{E} \subseteq \qty\Big{ \winf\qty(\bbv_n(\jh), \bbv[\bX]) \le 3\h( n, \theta\ms, \delta ) }.\nn
}
Using the tail bound on $\mathcal{E}$ we obtain
\eq{
    \pr\qty\Big( \winf\qty\big(\bbv_n(\jh), \bbv[\bX]) \le 3\h( n, \theta\ms, \delta ) ) \ge  \pr( \mathcal{E} ) \ge 1 - \delta\log_\theta\qty( \f{\theta\mmax}{\mmin} ),\nn
}
which is the desired result. \QED

\endgroup

% Appendix: Proofs
%%%%%%%%%%%%%%%%%%%%%%%%%%%%%%%%%%%%%%%%

%%%%%%%%%%%%%%%%%%%%%%%%%%%%%%%%%%%%%%%%
% Appendix: Auxiliary

\section{Technical Lemmas for Appendix~\ref{sec:proofs}}
\label{sec:technical}

%%%%%%%%%%%%%%%%%%%%%%%%%%%%%%%%%%%%%%%%%%%%%%%%%%%%%%%%%%%%%%%%%
%%%%%%%% PROOF OF MOM LEMMA

The proof of Theorem~\ref{theorem:momdist-sublevel} relies on the following lemma, which allows us to control the deviation of a pointwise median-of-means estimator from its uncontaminated population counterpart in terms of a Binomial tail probability.

\begin{lemma}\label{lemma:mom}
    Suppose $\pr \in \mathcal{P}(\bX)$ for $\bX \subset \R^d$ and $\Xn = \Xnm \cup \Ym$ is obtained under sampling condition \ref{setting} with $\Xnm$ observed i.i.d. from $\pr$. Let $\pr_n$ denote the empirical measure associated with $\Xn$ and for $2m < Q < n$, let $\pr_q$ be the empirical measure associated with the block $S_q$ for all $q \in [Q]$. Given a statistical functional {$T : \mathcal{P}(\R^d) \rightarrow \F(\R^d)$, let $T_Q(\pr_n) \in \F(\R^d)$} be the pointwise MoM estimator given by
    \eq{
        T_Q(\pr_n)(\xv) = \med\qty\Big{ T(\pr_q)(\xv) : q \in [Q] }, \  \ \textup{ for all } \xv \in \R^d.\nn
    }
    Then, for $t > 0$
    \eq{
        \pr\qty\bigg({ \norminf{ T_Q(\pr_n) - T(\pr) } > t }) \le \pr\qty({ \sum_{q \in A} \xi_{q}(t; n, Q) > \f{Q-2m}{2} }),\nn
    }
    where $A = \qty{ q \in [Q] : S_q \cap \Y_m = \varnothing}$ are the indices for the blocks containing no outliers, and
    \eq{
        \xi_{q}(t; n, Q) \defeq \mathds{1}\qty\Big( \norminf{ T(\pr_q) - T(\pr) } > t ) \ \ \textup{ for all } q \in A. \nn
    }
\end{lemma}

The proof is provided in Appendix~\ref{proof:lemma:mom}. The statement of Lemma~\ref{lemma:mom} holds for empirical processes arising from general classes of pointwise median-of-means estimators. In particular, by taking $T(\pr_q) = \dsf_{s,q}$ to be the distance function w.r.t. block $S_q$, the estimator $\dnq$ satisfies the conditions of Lemma~\ref{lemma:mom}. We also point out that the exponential concentration bound in Theorem~\ref{theorem:momdist-sublevel} is strictly better than similar bounds appearing in other pointwise MoM estimators (e.g., \citealp[Theorem~2]{humbert2020robust}). This is owing to the Chernoff bound (instead of a Hoeffding bound) used for bounding the Binomial tail probability appearing in Lemma~\ref{lemma:mom}. This provides a significant gain for Binomial random variables with shrinking probability \citep{Hagerup1990AGT}.

The next two results, Lemma~\ref{lemma:ab-filtration} and Lemma~\ref{lemma:ab-module} will be of assistance, and serve as generalizations of \cite[Lemma~4.8 \& Proposition~4.9]{anai2019dtm}. 

We state the first result for a general metric space $(\M, \rho)$ and an arbitrary weight function $f$. Here the ball of radius $r$ centered at $\xv \in \M$ is denoted $\Bfx[\rho][][r]$, and for a compact set $\bX \subset \M$, the $r$--offset of $\bX$ w.r.t the metric $\rho$ is given by $\bX[\rho](r) = \bigcup_{\xv \in \bX}\Bfx[\rho][][r]$. The following result provides a handle for the interleavings between $f$-weighted filtrations computed on two nested sets using the same function $f$. 

\begin{lemma}
    Given a metric space $(\M, \rho)$, two subsets $\bX, \bY$ of $\M$ such that $\bX \subseteq \bY$, and a weight function $f: \mathcal{M} \rightarrow \R_{\ge 0}$, let $\Vt[  ][\bX,f][\rho]$ and $\Vt[  ][\bY, f][\rho]$ be their respective $f$--weighted filtrations. If $f$ satisfies the property that
    \eq{
        \inf_{\xv \in \bX}\rho(\xv, \yv) \le f(\yv) + a,\nn
    }
    for {$a \ge 0$} and for all $\yv \in \bY$, then the filtrations are $(\id,\alpha)$--interleaved, i.e.,
    \eq{
        \Vt[][\bX, f][\rho] \subseteq \Vt[][\bY, f][\rho] \subseteq \Vt[\alpha(t)][\bX, f][\rho],\nn
    }
    for $\alpha: t \mapsto 2^{1 - \f 1 p} t + a + \sup_{\xv \in \bX}f(\xv)$ and {for all $t \ge 0$.}
    \label{lemma:ab-filtration}
\end{lemma}

The proof of Lemma~\ref{lemma:ab-filtration} is provided in Appendix~\ref{proof:lemma:ab-filtration}. Since map $\alpha$ appearing in Lemma~\ref{lemma:ab-filtration} is not purely a translation map, it does not lead to a bound in the interleaving metric as per \eref{eq:interleaving-filtration}, and, therefore, a bound in the $\winf$ metric cannot be characterized using Lemma~\ref{lemma:ab-filtration} alone.  The next result, which is stated only for the Euclidean space $(\R^d, \norm{\cdot})$, establishes that for sufficiently large values of $t$, the map $\alpha$ may be replaced by a translation map. The proof is provided in Appendix~\ref{proof:lemma:ab-module}.

\begin{lemma}
    Let $(\mathcal{M},\rho) = (\R^d, \norm{\cdot})$. Suppose $\bX$, $\bY \subset \R^d$ and $\bX \subseteq \Yb$, and $f$ satisfies the same conditions as in Lemma~\ref{lemma:ab-filtration} for {$a\ge 0$}. Let $t({\bX})$ be the filtration value for the simplex corresponding to $\bX$ in $\textup{nerve}\pb{\VVt[ ][\bX, f][\rho]}$, i.e.,
    \eq{
    t({\bX}) \defeq \inf \qty\Big{t>0: {\textstyle \bigcap\limits_{\xv \in \bX} }  B_{f, \rho}(\xv, t) \neq \varnothing},\nn
    }
    and $\beta: t \mapsto t + c(\bX)$ be a non-decreasing map with
    \eq{
        c(\bX) \defeq a +  \sup_{\xv \in \bX}f(\xv) + \pa{1 - \f 1 p}t(\bX).\nn
    }
    Then for all $t \ge t(\bX)$, the homomorphisms $\phi_{t}^{\beta(t)}: \bVt[t][\bX, f][\rho] \rightarrow \bVt[\beta(t)][\bX, f][\rho]$ are trivial, i.e.,
    $$
        {\textup{Im}\qty\big(\phi_{t}^{\beta(t)})} \cong \begin{cases}
            \mathbf{F}      & \text{ \ \ if \ \ } \bVt[t][\bX, f][\rho] = \textup{Hom}_0\pa{\Vt[t][\bX, f][\rho]}\nn      \\
            \pb{\mathsf{0}} & \text{\ \ \  if \ \ } \bVt[t][\bX, f][\rho] = \textup{Hom}_k\pa{\Vt[t][\bX, f][\rho]}, k>0
        \end{cases}.
    $$
    Furthermore, the bottleneck distance between the resulting $f$--weighted persistence diagrams is bounded above as
    \eq{
        \Winf\qty\Big( \dgm\pa{\bVt[  ][\bX, f][\rho]}, \dgm\pa{\bVt[  ][\bY, f][\rho]} ) \le c(\bX).\nn
    }
    \label{lemma:ab-module}
\end{lemma}

\begin{remark}
    Unlike Lemma~\ref{lemma:ab-filtration}, which is stated for general metric spaces, restricting ourselves to the Euclidean space $(\R^d, \norm{\cdot})$ in Lemma~\ref{lemma:ab-module} is sufficient for the objective of this work. However, as outlined in the proof, the only issue arises when \citet[Lemma~B.1]{anai2019dtm} is invoked. While \citet[Lemma~B.1]{anai2019dtm} (which holds for affine spaces satisfying the parallelogram identity) extends naturally to Banach spaces, the extension to general metric spaces will require some care on a case-by-case basis.
\end{remark}

%%%%%%%%%%%%%%%%%%%%%%%%%%%%%%%%%%%%%%%%%%%%%%%%%%%%%%%%%%%%%%%%%
%%%%%%%% Sublevel Equivalence

The following lemma provide a useful characterization of the persistence diagram obtained using the sublevel sets of $\dnq$.

\begin{lemma}
    Given samples $\Xn$ and $Q<n$, $V[\dnq]$ and $\alpha: t \mapsto 2^{\f{p-1}{p}} t$, $V{[\R^d, \dnq]}$ are $(\id, \alpha)-$interleaved for  for all $p \ge 1$. In particular, $V[\dnq] = V{[\R^d, \dnq]}$ when $p=1$. 
    \label{lemma:sublevel-equivalence}
\end{lemma}

The proof of Lemma~\ref{lemma:sublevel-equivalence} is provided in Appendix~\ref{proof:lemma:sublevel-equivalence}. Note that, $V^t[\dnq]$ is the sublevel sets of $\dnq$, whereas $V{[\R^d, \dnq]}$ is the $\dnq$-weighted offset of all points in $\R^d$; when $p \neq 1$, they are not necessarily the same.

%%%%%%%%%%%%%%%%%%%%%%%%%%%%%%%%%%%%%%%%%%%%%%%%%%%%%%%%%%%%%%%%%
%%%%%%%% PROOF OF 

\subsection{Proof of Lemma~\ref{lemma:mom}}
\label{proof:lemma:mom}
    For $t > 0$, define two events
    \eq{
        E_1 = \pb{\norminf{T_Q(\pr_n) - T(\pr)} \le t},\;{and}\;
        E_2 = \pb{ \#\qty\big{q \in [Q]: \norminf{ T(\pr_q) - T(\pr) } > t} \le \f{Q}{2} }.\nn
    }
    
    First, we show that $E_2 \subseteq E_1$. To this end,
    \eq{
         E_2 &\subseteq  \pb{\#\qty\Big{q \in [Q]: \norminf{T(\pr_q) - T(\pr)} > t} \le \f{Q}{2}}\n
        &\subseteq   \pb{\#\qty\Big{q \in [Q]: \norminf{T(\pr_q) - T(\pr)} \le t} > Q - \f{Q}{2}}\n
        &\subseteq   \pb{\#\qty\Big{q \in [Q]: \forall \xv \in \R^d, T(\pr)(\xv) - t \le T(\pr_q)(\xv) \le T(\pr)(\xv) + t} > \f{Q}{2}}\n
        &\subseteq   \pb{\forall \xv \in \R^d, T(\pr)(\xv) - t \le \med\qty\big{T(\pr_q)(\xv) : q \in [Q]} \le T(\pr)(\xv) + t}\n
        &\subseteq   \pb{\forall \xv \in \R^d, T(\pr)(\xv) - t \le T_Q(\pr_n)(\xv) \le T(\pr)(\xv) + t}\n
        &\subseteq   \pb{\norminf{T_Q(\pr_n) - T(\pr)} \le t} = E_1.\nn
    }
    Therefore, we have $E_2 \subseteq E_1$. Next, note that $E_2$ can be written as 
    \eq{
        E_2 = \pb{\sum_{q=1}^Q \rqt \le \f Q 2},\nn
    }
    where, for each $q \in [Q]$,
    \eq{
        \rqt \defeq \mathds{1}\qty\Big( \norminf{ T(\pr_q) - T(\pr) } > t ).\nn
    } 
    Since $0 \le \rqt \le 1$ a.s., we have that
    \eq{
        \sum_{q=1}^Q \rqt 
        &= \sum_{q \in A}\rqt + \sum_{q\in A^c}\rqt\\
        &\le \sum_{q \in A}\rqt + \abs{A^c} \le \sum_{q \in A}\rqt + m.\nn
    }
    As a result, we can further bound the probability of $E_2$ from below as
    \eq{
        \pr(E_2) \ge \pr\pa{\sum_{q \in A}\rqt \le \f Q 2 - m}.
        \label{mom-lemma-1}
    }
    Combining \eref{mom-lemma-1} with the fact that $E_2 \subseteq E_1$, we obtain
    \eq{
        \pr\qty\bigg({ \norminf{ T_Q(\pr_n) - T(\pr) } > t }) = \pr(E_1^c) \le \pr(E_2^c) \le \pr\pa{\sum_{q \in A}\rqt > \f Q 2 - m},\nn
    }
    which gives us the desired result. \QED

%%%%%%%%%%%%%%%%%%%%%%%%%%%%%%%%%%%%%%%%%%%%%%%%%%%%%%%%%%%%%%%%%
%%%%%%%% PROOF OF AB FILTRATION

\subsection{Proof of Lemma~\ref{lemma:ab-filtration}}
\label{proof:lemma:ab-filtration}

Since $\bX \subseteq \bY$, the inclusion $\Vt[][\bX, f][\rho] \subseteq \Vt[][\bY, f][\rho]$ holds trivially. For the next part, let ${\vt{t} = \Vt[][\bX, f][\rho]}$ and ${\ut{t} = \Vt[][\bY, f][\rho]}$ denote the respective $f$--weighted filtrations, so as to avoid the notational overload. In order to show the second inclusion, i.e., $\ut{t} \subseteq \vt{\alpha(t)}$, consider $\zv \in \ut{t}$. Then, there exists $\yv \in \bY$ such that $\zv \in \ball{\yv, t}$. If $\yv \in \bX \subset \bY$, then it immediately follows that ${\zv \in \vt{t} \subseteq \vt{\alpha(t)}}$. In what remains, for $\yv \in \bY \setminus \bX$, it is sufficient to show that there exists $\xv \in \bX$ such that ${\zv \in \ball{\xv, \alpha(t)}}$.

To this end, let $\xvy = \arginf_{\xv \in \bX}\rho(\xv, \yv)$ be the projection of $\yv$ onto $\bX$ via $\rho$. Then two following cases arise: (I) $\rho(\xvy, \zv) \le \rho(\xvy, \yv)$, and (II) $\rho(\xvy, \zv) \ge \rho(\xvy, \yv)$ (see Figure~\ref{fig:proof:ab-filtration1}).

\begin{figure}
    \centering
    \begin{subfigure}[b]{0.45\textwidth}
        \centering
        \includegraphics[width=\textwidth]{./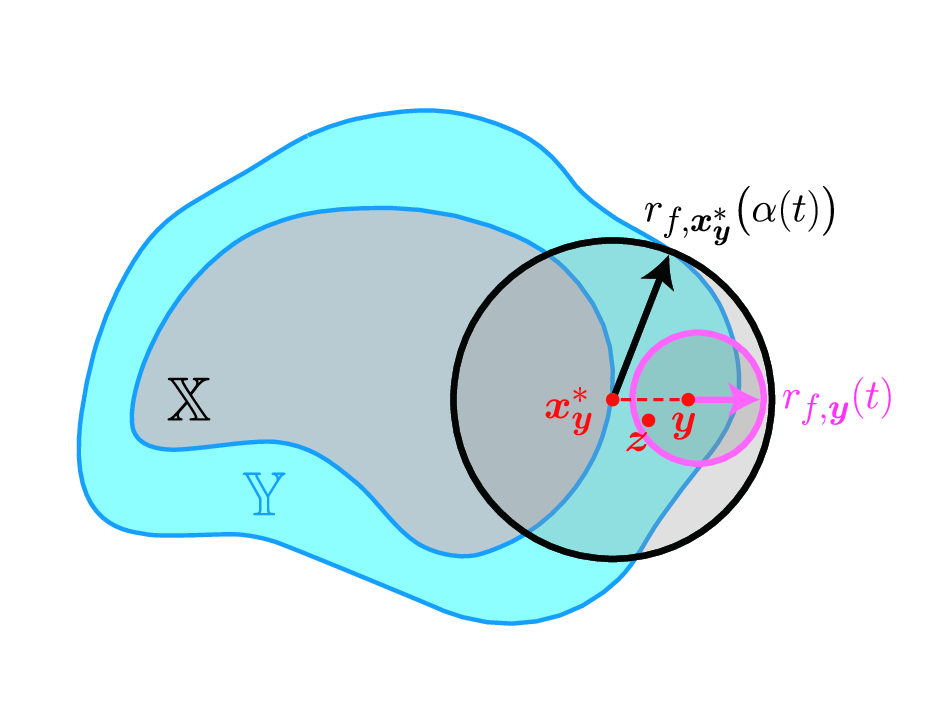}
    \end{subfigure}
    % \hfill
    \begin{subfigure}[b]{0.45\textwidth}
        \bigskip
        \includegraphics[width=\textwidth]{./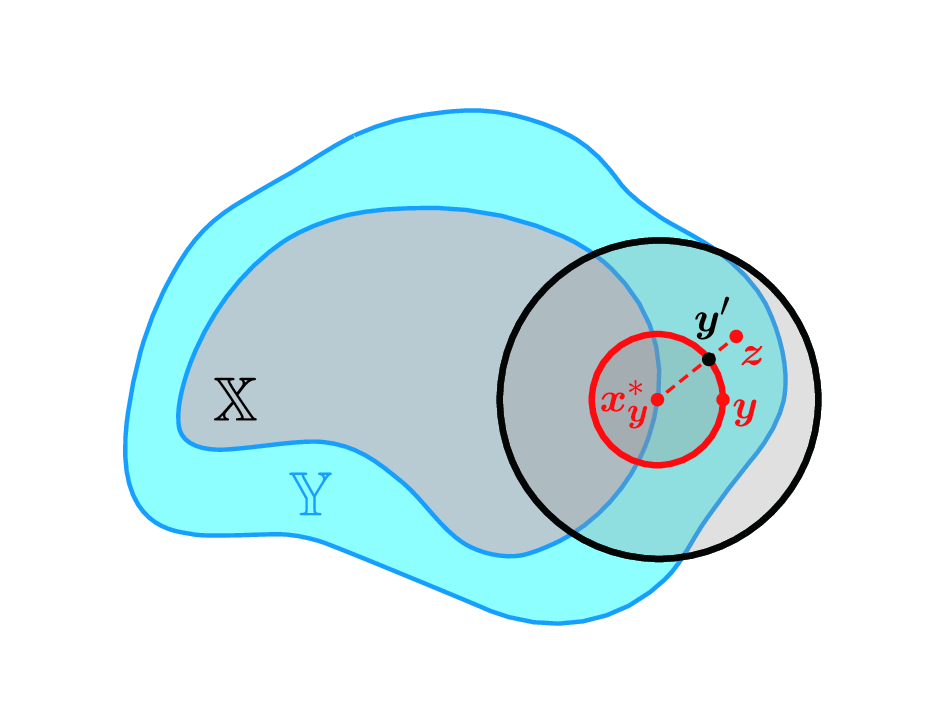}
    \end{subfigure}
    \caption{Illustration of Case I (Left) and Case II (Right).}
    \label{fig:proof:ab-filtration1}
\end{figure}

\textbf{Case I.} The distance between $\xvy$ and $\zv$ will satisfy
\eq{
    \rho(\xvy, \zv) \le \rho(\xvy, \yv) &\num{\le}{i} f(\yv) + a\n
    &\num{\le}{ii} \qty\big({t^p - \rho(\yv, \zv)^p})^{\fpp} + a\n
    &\le t +  a\nn
}
where (i) follows from the assumption on $f$, and (ii) follows from the fact that if $\zv \in \ball{\yv, t}$, then $\rho(\yv, \zv) \le \rfx[f][\yv][t] = \pa{t^p - f(\yv)^p}^{1/p}$. Furthermore, from Lemma~\ref{lemma:useful-inequalities}~(vi) we obtain
\eq{
    \rho(\xvy, \zv) &\le \qty\Big( (t+a + f(\xvy))^p - f(\xvy)^p )^{\fpp}\n
    &\le \qty\Big( \qty\big(t+a + \sup_{\xv \in \bX}f(\xv))^p - f(\xvy)^p )^{\fpp}\n
    &\le \qty\Big( \qty\big(2^{1-\fpp}t+a + \sup_{\xv \in \bX}f(\xv))^p - f(\xvy)^p )^{\fpp}\n
    &= \qty\big( \alpha(t)^p - f(\xvy)^p )^{\fpp} = \rfx[f][\xvy][\alpha(t)],\nn
}
where the last inequality holds because $2^{1-\fpp} \ge 1$. The last line implies that 
$$
{\zv \in \ball{\xvy, \alpha(t)} \subseteq \vt{\alpha(t)}}.
$$

\textbf{Case II.} For $r = \rho(\xvy, \yv)$ let $\yv'$ be the projection of $\zv$ onto $\partial B\pa{\xvy, r}$, i.e.,
$$
    \yv' = \arginf_{\xv' \in \partial B\pa{\xvy, r}}\rho(\xv',\zv).
$$
The point $\yv'$ satisfies the following three properties: (PI)~${\rho(\xvy, \yv')  = \rho(\xvy, \yv)}$, since $\yv' \in \partial\ball{\xvy, r}$; (P-II)~$\rho(\zv, \yv') \le \rho(\zv, \yv)$ by definition of $\yv'$; and (PIII)~$\rho(\xvy, \yv') + \rho(\yv', \zv) \ge \rho(\xvy, \zv)$ from the triangle inequality.

Since $\zv \in \ball{\yv, t}$, when $\rho(\xvy, \yv) \le a$ we may use the triangle inequality to obtain
\eq{
    \rho(\xvy, \zv) \le \rho(\xvy, \yv) + \rho(\zv, \yv) \le a + \qty( t^p - f(\yv)^p )^{\fpp} \le a + t \le a + 2^{1-\fpp}t.\label{eq:rho-1}
}
Alternatively, when $\rho(\xvy, \yv) > a$ we obtain the following inequality,
\eq{
    t^p &\ge \rho(\yv, \zv)^p + f(\yv)^p \n
    &\num{\ge}{iii} \rho(\zv, \yv)^p + \pa{\rho(\xvy, \yv) - a}^p\n
    &\num{=}{iv} \rho(\zv, \yv)^p + \pa{\rho(\xvy, \yv') - a}^p\n
    &\num{\ge}{v} \rho(\zv, \yv')^p + \pa{\rho(\xvy, \yv') - a}^p\n
    &\num{\ge}{vi} \qty\Big(\rho(\xvy, \zv) - \rho(\xvy, \yv'))^p + \qty\Big(\rho(\xvy, \yv') - a)^p\n
    &\num{\ge}{vii} 2^{1-p}\qty\Big( \rho(\xvy, \zv)-  a)^p,
    \label{eq:proof:ab-filtration1}
}
where (iii) holds from the assumption on $f$, (iv--vi) follow from (PI--PIII) respectively, and (vii) uses Lemma~\ref{lemma:useful-inequalities}\,(i). Rearranging the terms of \eref{eq:proof:ab-filtration1} we get $\rho(\xvy, \zv) \le a + 2^{1-\fpp}t$. Therefore, from \eref{eq:rho-1} and \eref{eq:proof:ab-filtration1}, in case (II) we have that
\eq{
    \rho(\xvy, \zv) &\le 2^{1-\fpp}t + a\n
    &\num{\le}{viii} \qty\Big( \qty\big(2^{1-\fpp}t + a + \sup_{\xv \in \bX}f(\xv))^p - f(\xvy)^p )^{\fpp}\n
    &= \rfx[f][\xvy][\alpha(t)],\nn
}
where (viii) uses Lemma~\ref{lemma:useful-inequalities}~(vi). Similar to case (I), we obtain {$\zv \in \ball{\xvy, \alpha(t)^{ }} \subseteq \vt{\alpha(t)}$}. \QED

%%%%%%%%%%%%%%%%%%%%%%%%%%%%%%%%%%%%%%%%%%%%%%%%%%%%%%%%%%%%%%%%%%%%%%
%%%%%%%%%%%%%%%%%%%%%%%%%%%%%%%%%%%%%%%%%%%%%%%%%%%%%%%%%%%%%%%%%
%%%%%%%% PROOF OF AB MODULE

\subsection{Proof of Lemma~\ref{lemma:ab-module}}
\label{proof:lemma:ab-module}

\providecommand{\xvo}{{\xv_{o}}}

Let {$t \ge t(\bX)$} where 
$$
t(\bX) \defeq \inf\pb{t > 0: \bigcap_{\xv \in \bX}\ball{\xv, t} \neq \varnothing}, 
$$
and let $\xvo \in \bigcap_{\xv \in \bX}\ball{\xv, t}$. To ease the notation, let $\ut{t} = \Vt[t][\bY, f][\rho]$ denote the usual $f$-weighted filtration, and let $\wt{t}$ be defined as
\eq{
    \wt{t} = \pb{\bigcup_{\xv \in \bX}\ball{\xv, \beta(t)}} \cup \pb{\bigcup_{\yv \in \bY \setminus \bX}\ball{\yv, t}},\nn
}
such that $\ut{t} \subset \wt{t} \subset \ut{\beta(t)}$. With this background, the proof closely follows that of \citet[Proposition~4.8]{anai2019dtm}. Specifically, the proof is based on the following outline:
\begin{enumerate}[label=\protect\circled{\arabic*}]
    \item We first establish that for any $\yv \in \bY \setminus \bX$, there exists $\xv = \xvy \in \bX$ such that for all $t \ge t(\bX)$, $\ball{\yv, t} \cup \ball{\xv, \beta(t)}$ is star-shaped around $\xvo$. Since this holds for all $\yv \in \bY \setminus \bX$, it also holds for $\bigcup_{\yv \in \bY \setminus \bX}\ball{\yv, t}$, and, therefore, $\wt{t}$ is star-shaped and contractible to $\xvo$.
    \item The inclusion map $\iota_t: \ut{t} \hookrightarrow \ut{\beta(t)}$ can be decomposed as $\iota_t = j_t \circ \kappa_t$ where ${j_t: \ut{t} \hookrightarrow \wt{t}}$ and ${\kappa_{t}: \wt{t} \hookrightarrow \ut{\beta(t)}}$. Since $\wt{t}$ is star-shaped and contractible, i.e., $\wt{t} \sim \pb{\xvo}$, the linear map between the homology groups induced by $\kappa_{t}$, i.e., $v_t: \bwt{t} \rightarrow \but{\beta(t)}$ will be trivial.
    \item The interleavings $\alpha(t)$ (Lemma~\ref{lemma:ab-filtration}) and $\beta(t)$ are combined to provide the bound in $\Winf$.
\end{enumerate}

\textbf{Claim \circled{1}.} Let $\yv \in \bY \setminus \bX$. We need to show that there exists $\xv \in \bX$ such that ${\ball{\xv, \beta(t)}\cup\ball{\yv, t}}$ is star-shaped around $\xvo$, i.e., for any $\zv \in \ball{\yv, t}$ the {segment} $\Gamma[\xvo, \zv]$ is contained inside the set ${\ball{\xv, \beta(t)}\cup\ball{\yv, t}}$. See Figure~\ref{fig:lemma:ab-module-claim1}.

\begin{figure}
    \includegraphics[width=\textwidth]{./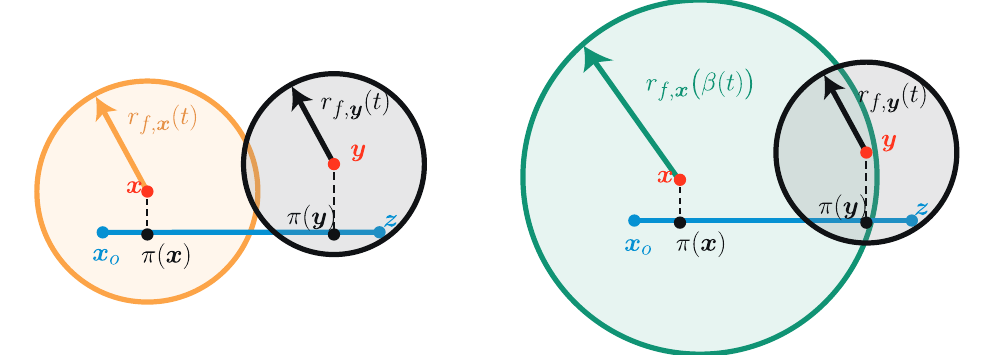}
    \caption{Illustration of Claim \protect\circled{1}.}
    \label{fig:lemma:ab-module-claim1}
\end{figure}

To this end, let $\xv = \arginf_{\zv \in \bX}\rho(\zv, \yv)$ be the projection of $\yv$ onto $\bX$. Note that, from the definition of $\xvo$, $\xvo \in \ball{\xv, t}$ for all $t \ge t(\bX)$. For simplicity, let $S^t = {\ball{\xv, \beta(t)}\cup\ball{\yv, t}}$. Additionally, let $\pi(\xv)$ and $\pi(\yv)$ be the projection of $\xv$ and $\yv$ onto $\Gamma[\xvo, \zv]$, respectively, i.e.,
\eq{
    \pi(\xv) = \arginf_{\xv' \in \Gamma[\xvo, \zv]}\rho(\xv', \xv),\nn
}
mutatis mutandis, the same for $\pi(\yv)$. 

By definition, $\rho(\xv, \pi(\xv)) \le \rho(\xv, \xvo)$ and ${\rho(\yv, \pi(\yv)) \le \rho(\yv, \zv)}$, and consequently, $\pi(\yv) \in \ball{\yv, t}$. This implies that $\Gamma[\pi(\yv), \zv] \subseteq S^t$. What remains to be established is that $\Gamma[\xvo, \pi(\yv)] \subseteq S^t$. In order to show this, note that it is sufficient to show that $\pi(\yv) \in \ball{\xv, \beta(t)}$. Indeed, if this holds, then $\Gamma[\xvo, \pi(\yv)] \subseteq \ball{\xv, \beta(t)} \subseteq S^t$, and it will follow that ${\Gamma[\xvo, \pi(\yv)] \cup \Gamma[\pi(\yv), \zv]  = \Gamma[\xvo, \zv] \subseteq S^t}$.

Let $\tau = \rho(\yv, \pi(\yv))$. Since $\pi(\yv) \in \ball{\yv, t}$, when $\rho(\xv, \yv) > a$ it follows that
\eq{
    \tau &\le \rfx[f][\yv][t]\n
    &\le \qty\Big(t^p - f(\yv)^p)^{\fpp}\n
    &\le \qty\Big(t^p - \qty\big(\rho(\xv, \yv) - a)^p)^{\fpp},\nn
}
where the last inequality follows from the assumption on $f$. Thus, we have
\eq{
    \rho(\xv, \yv) &\le \qty\Big(t^p - \tau^p)^{\fpp} + a.
    \label{eq:lemma:ab-module-eq2}
}
Alternatively, when $\rho(\xv, \yv) \le a$, \eref{eq:lemma:ab-module-eq2} holds trivially. Since $\pi(\xv) \in \ball{\xv, t}$ and $\rho(\xv, \pi(\xv)) \le \rho(\xv, \xvo)$, it follows that
\eq{
    \rho(\xv, \pi(\xv)) \le t(\bX).
    \label{eq:lemma:ab-module-eq3}
}
Since $\rho=\norm{\cdot}$, \citet[Lemma~B.2]{anai2019dtm} holds, which, combined with Eqs.\eqref{eq:lemma:ab-module-eq2}~and~\eqref{eq:lemma:ab-module-eq3} yields
\eq{
\rho(\xv, \pi(\yv))^2 &\num{\le}{i} \qty\Big(\qty\Big(t^p - \tau^p)^{\fpp} + a)^2 + \tau(2t(\bX) - \tau)\n
&\le \qty\Big(t^p - \tau^p)^{\f2p} + \tau(2t(\bX) - \tau) + a^2 + 2a\qty\Big(t^p - \tau^p)^{\fpp}\n
&\num{\le}{ii} (t + \kappa t(\bX))^2 + a^2 + 2at\n
&\le (t + \kappa t(\bX))^2 + a^2 + 2a(t+\kappa t(\bX))\n
&= \qty\big(t+a+\kappa t(\bX))^2,\nn
}
where (i) is a consequence of \citet[Lemma~B.2]{anai2019dtm}, (ii) follows from \cite[Lemma~B.3]{anai2019dtm} and noting that $t^p - \tau^p \le t^p$ since $\tau \le t$, and $\kappa = (1-\fpp)$. The proofs of \citet[Lemma~B.2~\&~B.3]{anai2019dtm} require that the metric $\rho(\cdot, \cdot)$ admits the paralellogram identity for the result to hold. Additionally, from Lemma~\ref{lemma:useful-inequalities}~(vi) we obtain
\eq{
    \rho(\xv, \pi(\yv)) &\le t+a+\kappa t(\bX) \le \qty\Big( \qty\big(t+a+\kappa t(\bX) + \sup_{\xv \in \bX}f(\xv))^p - f(\xv)^p )^{\fpp} = \rfx[f][\xv][\beta(t)].\nn
}
This implies that $\pi(\yv) \in \ball{\xv, \beta(t)}$, and establishes claim \circled{1}.

For claim \circled{2}, note that since $\wt{t} \sim \pb{\xvo}$, for the $k$th homology group $\bwt{t}$, we have that $\bwt{t} \simeq \mathbf{F}$ for $k=0$, and $\bwt{t} \simeq \pb{\mathsf{0}}$ for $k > 0$. Therefore, the map $w_t: \bwt{t} \rightarrow \but{\beta(t)}$ is trivial, and consequently, so is the linear map $\but{t} \rightarrow \but{\beta(t)}$.

In order to show claim \circled{3}, observe that the persistence modules $\but{ }$ and $\bvt{ }$ are
\eq{
    \begin{cases}
        \text{ $(\id,\alpha)$--interleaved } & \text{ for all $t$  and for $\alpha: t \mapsto 2^{1-\fpp}t + a + \sup_{\xv \in \bX}f(\xv)$}\nn         \\
        \text{ $(\id,\beta)$--interleaved }  & \text{ for $t \ge t(\bX)$ and for $\beta: t \mapsto t+a+\kappa t(\bX) + \sup_{\xv \in \bX}f(\xv)$}\nn
    \end{cases}.
}
When $t\le t(\bX)$, from \cite[Lemma~B.1]{anai2019dtm},
\eq{
    \alpha(t) &= t + \qty\Big(2^{1-\fpp}-1)t + a + \sup_{\xv \in \bX}f(\xv) \le t + \kappa t(\bX) + a + \sup_{\xv \in \bX}f(\xv) = \beta(t).\nn
}
Thus, $\alpha(t) \le \beta(t)$ for $t \le t(\bX)$. Since $\beta: t \mapsto t + c(\bX)$ is an additive interleaving for $c(\bX) = \kappa t(\bX) + a + \sup_{\xv \in \bX}f(\xv)$, this implies that
\eq{
    \Winf\qty\big(\dgm\pa{\but{ }}, \dgm\pa{\bvt{ }}) \le c(\bX),\nn
}
which establishes claim \circled{3}. \QED

\subsection{Proof of Lemma~\ref{lemma:sublevel-equivalence}}
\label{proof:lemma:sublevel-equivalence}

\begingroup
\providecommand{\rd}{\R^d}
\renewcommand{\a}{\alpha}

For simplicity, let $f=\dnq$ denote the \md{} function. By definition, $V[f]$ and $V[\rd,f]$ are $( \id, \a )-$interleaved if the following relationship holds
\eq{
    V^t[f] \subseteq V^t[\rd, f] \subseteq V^{\a(t)}[f] \qq{for all} t \ge 0.\nn
}
The first inclusion is straightforward since
\eq{
    V^t[f] \subseteq \bigcup_{\xv \in V^t[f]}B_f(\xv, t) = \bigcup_{\xv \in \rd}B_f(\xv, t) =   V^t[\rd,f].\nn
}

For the second inclusion, suppose $\xv \in V^t[\rd, f]$, i.e., there exists $\yv \in \rd$ such that $\norm{\xv-\yv} \le r_{f,\yv}(t)$. It suffices to show that $\xv \in V^{\a(t)}[f]$. To this end, note that since $\dnq$ is $1-$Lipschitz by Lemma~\ref{lemma:lipschitz}  it follows that
\eq{
    f(\xv) &\le f(\yv) + \norm{\xv-\yv}\n
    &\le f(\yv) + r_{f, \yv}(t)\n
    &= f(\yv) + (t^p - f(\yv)^p)^{\f 1p}\n
    &\num{\le}{i} 2^{\f{p-1}{p}}\qty( f(\yv)^p + (t^p - f(\yv)^p) )^{\f 1p} = 2^{\f{p-1}{p}}t,\nn
}
where (i) follows from an application of Lemma~\ref{lemma:useful-inequalities}~(iii). Since $f(\xv) \le 2^{\f{p-1}{p}}t = \a(t)$, it implies that {$x \in V^{\a(t)}[f]$} and the result follows. When $p=1$, note that $\a(t) = t$, and therefore $V[f] = V[\rd, f]$.
\endgroup
\QED

\section{Auxiliary Results}
\label{sec:auxiliary}

Consider the Huber contamination model with parameter $\eta \in (0, 1/2)$, i.e., 
\begin{align}
    \pr_{\eta, \qr} = (1-\eta)\pr + \eta\qr,\label{eq:huber}
\end{align}
where $\pr$ is the nominal distribution and $\qr$ is the contaminating distribution. The key tool for establishing minimax rates under contamination models is the \textit{modulus of continuity} associated with a loss function.

\begin{definition}\label{def:modulus-of-continuity}
    Let $\qty{\pr_\theta: \theta\in \Theta}$ be a class of distributions, and let $L: \Theta \times \Theta \to \R$ be a loss function. The modulus of continuity of $L$ with respect to the contamination model is given by
    \begin{align}
        \omega(\eta, \Theta) = \sup\qty{ L(\theta, \theta') : \tv({\pr_\theta, \pr_{\theta'}}) \le \frac{\eta}{1-\eta},\; \theta_1, \theta_2 \in \Theta }.\nn
    \end{align}
\end{definition}

In other words, the modulus of continuity ``measures the difficulty of the hardest one-dimensional subproblem'' \citep{donoho1994statistical}. The modulus of continuity is used in the following theorem by \cite{chen2018robust} to establish lower bounds on the minimax risk under $\eta-$Huber contamination.

\begin{theorem}[Theorem~5.1, \citealt{chen2018robust}]\label{thm:chen}
    Suppose there is $\mathfrak{R}_n(0)$ such that for $\eta=0$,
    \begin{align}\label{eq:chen-risk}
        \inf_{\hat{\theta}} \sup_{\theta\in\Theta} \sup_{\qr} \pr_{\eta,\qr}\qty\Bigg{ L(\hat\theta, \theta) > \mathfrak{R}_n(\eta) } \gtrsim c
    \end{align}
    holds for some $c > 0$. Then for any $\eta \in (0, 1/2)$, the minimax bound \eref{eq:chen-risk} holds for $\mathfrak{R}_n(\eta) = \mathfrak{R}_n(0) \wedge \omega_L(\eta, \Theta)$.
\end{theorem}

The following lemma is a collection of well-known inequalities (and their slight variants). We state them here for reference, as they are used frequently in the proofs.

\begin{lemma}
    For $0 < y \le x$ and $p \ge 1$, the following inequalities hold:
    \begin{enumerate}[label=\textup{(\roman*)}, itemsep=7pt]
        \item $x^p + y^p \le (x+y)^p \le 2^{p-1}(x^p + y^p)$;
        \item $2^{1-p}x^{p} - y^p \le (x-y)^p \le x^p - y^p$;
        \item $(x+y)^{\f 1 p} \le x^{\f 1 p} + y^{\f 1 p} \le 2^{\f{p-1}{p}}(x+y)^{\f 1 p}$;
        \item $x^{\f 1 p} - y^{\f 1 p} \le (x-y)^{\f 1 p} \le 2^{\f{p-1}{p}}x^{\f 1 p} - y^{\f 1 p}$;
        \item $y^{1 - \f1p}x^{\f1p} \le x \le y^{1-p}x^p$;
        \item $x \le \qty\Big(\pa{x+y}^p - y^p)^{\fpp}$.
    \end{enumerate}
    \label{lemma:useful-inequalities}
\end{lemma}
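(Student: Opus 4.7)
The plan is to reduce all six inequalities to two elementary facts: (a) Jensen's inequality applied to the convex function $t\mapsto t^p$ for $p\ge 1$, which yields $((x+y)/2)^p \le (x^p+y^p)/2$, and (b) the observation that $s^p \le s$ whenever $s\in[0,1]$ and $p\ge 1$. The estimates in (iii) and (iv) will then follow from (i) and (ii) by the substitution $x\mapsto x^{1/p}$, $y\mapsto y^{1/p}$ followed by taking $p$-th roots, while (v) and (vi) are essentially rearrangements.

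First I would prove (i). Dividing through by $(x+y)^p$ and writing $s = x/(x+y)\in[0,1]$, the left inequality $x^p + y^p \le (x+y)^p$ becomes $s^p + (1-s)^p \le s + (1-s) = 1$, which is immediate from fact (b). The right inequality $(x+y)^p \le 2^{p-1}(x^p+y^p)$ is Jensen's inequality applied to the midpoint $(x+y)/2$, then multiplied through by $2^p$. Statement (vi) is then just the left half of (i) rearranged: subtracting $y^p$ and taking the $p$-th root yields $x \le ((x+y)^p - y^p)^{1/p}$.

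Next, (ii) follows from (i) by the substitution $u = x-y\ge 0$, $v = y$, so that $u+v = x$. The left half of (i) applied to $u,v$ gives $(x-y)^p + y^p \le x^p$, which is the right half of (ii); the right half of (i) gives $x^p \le 2^{p-1}((x-y)^p + y^p)$, which rearranges to the left half of (ii). For (iii) and (iv), I would substitute $x\mapsto x^{1/p}$ and $y\mapsto y^{1/p}$ into (i) and (ii) respectively, then take the $p$-th root of the resulting relation; because all quantities are nonnegative and $t\mapsto t^{1/p}$ is monotone on $[0,\infty)$, the direction of each inequality is preserved.

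Finally, (v) is pure monotonicity combined with $y\le x$. Since $1-1/p\ge 0$, we have $y^{1-1/p}\le x^{1-1/p}$, so multiplying through by $x^{1/p}$ gives $y^{1-1/p}x^{1/p}\le x$; since $1-p\le 0$, we have $y^{1-p}\ge x^{1-p}$, so multiplying through by $x^p$ gives $x\le y^{1-p}x^p$. No genuine obstacle arises anywhere; the whole lemma is a compact catalogue of the convexity and subadditivity properties of the power map, and the only place any nontrivial content is used is in deriving (i).
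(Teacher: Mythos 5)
Your approach works cleanly for parts (i), (ii), (iii), (v), and (vi), and in places is slicker than the paper's: you get the left half of (i) from the elementary observation $s^p\le s$ on $[0,1]$ rather than the paper's derivative argument, and you get (iii) by direct substitution and $p$-th roots rather than the paper's separate concavity/derivative argument. Your derivations of (ii) from (i), of (v) from monotonicity, and of (vi) from (i) are all sound and match the paper's ideas closely.

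However, there is a real gap in (iv). Substituting $x\mapsto x^{1/p}$, $y\mapsto y^{1/p}$ into (ii) produces $2^{1-p}x - y \le \bigl(x^{1/p}-y^{1/p}\bigr)^p \le x-y$; taking $p$-th roots of the right-hand inequality does give the left half of (iv), namely $x^{1/p}-y^{1/p}\le (x-y)^{1/p}$. But taking $p$-th roots of the left-hand inequality (when nonnegative) yields $\bigl(2^{1-p}x - y\bigr)^{1/p}\le x^{1/p}-y^{1/p}$, which is \emph{not} the right half of (iv), i.e.\ $(x-y)^{1/p}\le 2^{(p-1)/p}x^{1/p}-y^{1/p}$. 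These are genuinely different statements, and the former does not imply the latter. The correct route — the one the paper uses — is to treat (iv) as standing to (iii) exactly as (ii) stands to (i): set $z=x-y$ and apply (iii) to the pair $(z,y)$, which gives $x^{1/p}\le z^{1/p}+y^{1/p}\le 2^{(p-1)/p}x^{1/p}$ and rearranges to both halves of (iv). (Equivalently, the right half of (iv) follows from the right half of (i) applied to $u=(x-y)^{1/p}$, $v=y^{1/p}$.) Your plan should be amended to say ``apply (iii) to $(x-y,y)$'' rather than ``substitute into (ii)''.
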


\begin{proof}

    \emph{Part (i).} Let $f(y) = (x+y)^p - x^p - y^p$ on the interval $0 < y \le x$. The derivative,
    $$
        f'(y) = p(x+y)^{p-1} - py^{p-1} \ge 0
    $$
    for all $0 < y \le x$ and $p \ge 1$. Therefore $f$ is non-decreasing, and $f(y) \ge f(0) = 0$. This gives us the first inequality. For the second inequality, note that $g(z) = z^p$ is convex for $z\ge 0$. This follows from the fact that $g''(z) = p(p-1)z^{p-2} \ge 0$ for all $z \ge 0$ and $p\ge 1$. By convexity, we obtain
    \eq{
        2^{-p} \pa{x+y}^p = \pa{\half x + \half y}^p \le \f{x^p + y^p}{2},\nn
    }
    which leads to the second inequality.

    \emph{Part (ii).} Let $z = (x-y)$. Applying the first inequality from the preceding part to $z$ and $y$ we get $z^p \le (y+z)^p - y^p$, i.e., $(x-y)^p \le x^p - y^p$. Similarly, from the second inequality, $(z+y)^p \le 2^{p-1}(z^p + y^p)$, which is the same as $2^{1-p}x^p - y^p \le (x-y)^p$.

    \emph{Part (iii).} Taking $a=x^{1/p}$ and $b=y^{1/p}$, from (i) it follows that
    $$
        (x + y) \le (x^{1/p}+y^{1/p})^p \le 2^{p-1}(x + y),
    $$
    and by noting that the map $t \mapsto t^{1/p}$ is increasing on $\R_+$, the result in (iii) follows.

    \emph{Part (iv).} The proof is identical to the proof in Part (ii). The inequalities are obtained by taking $z=(x-y)$, and applying the results of Part (iii).

    \emph{Part (v).} Since $y \le x$, it follows that $1 \le \pa{x/y}^{\f1p} \le x/y \le \pa{x/y}^p$ for $p\ge 1$. By rearranging the terms, we get $x \le y^{1-p}x^p$ and $x \ge y^{1 - \f1p}x^{\f1p}$.

    \emph{Part (vi).} We have $x = (x + y - y) = \qty\big((x+y-y)^p)^{\fpp}$. From Part (ii) we have
    $$
        {(x+y-y)^p \le (x+y)^p - y^p},
    $$
    which, {on rearranging,} yields $x \le \qty\big((x+y)^p - y^p)^{\fpp}$.
\end{proof}

%%%%%%%%%%%%%%%%%%%%%%%%%%%%%%%%%%%%%%%%%%%%%%%%%%%%%%%%%%%%%%%%%
%%%%%%%% CHERNOFF-HOEFFDING

\begin{lemma}[Chernoff-Hoeffding bound simplified]
    Suppose ${ Z_1, Z_2, \dots Z_N }$ are i.i.d. $\textup{Bernoulli}(p)$ random variables. Then, for $0 < \e < 1$,
    \eq{
        \pr\pa{\f{1}{N}\sum_{1 \le i \le N}Z_i > \e} \le \min\qty{1, \exp\qty\Bigg( N \pa{\f{2}{e} + \e \log(p)})}.\nn
    }
    \label{lemma:chernoff-hoeffding}
\end{lemma}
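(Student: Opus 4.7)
The plan is to reduce this simplified bound to the classical Chernoff--Hoeffding inequality in its binary Kullback--Leibler form, and then simplify the resulting exponent using elementary pointwise estimates. Specifically, for $\e > p$ the standard Chernoff bound for i.i.d.\ Bernoulli$(p)$ random variables yields
\[
    \pr\!\qty\bigg(\f{1}{N}\sum_{i=1}^{N} Z_i > \e) \le \exp\qty\big(-N\,D(\e \,\|\, p)),
\]
where $D(\e\|p) = \e\log(\e/p) + (1-\e)\log((1-\e)/(1-p))$ is the binary KL divergence. The task then boils down to establishing the pointwise exponent inequality $-D(\e\|p) \le \f{2}{e} + \e\log p$.

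Expanding $-D(\e\|p)$ and cancelling the matching $\e\log p$ terms on both sides, this reduces to
\[
    -\e\log\e - (1-\e)\log(1-\e) + (1-\e)\log(1-p) \le \f{2}{e}.
\]
Since $(1-\e)\log(1-p) \le 0$ for $p \in (0,1)$, that term may be dropped, and it suffices to bound the binary entropy $h(\e) = -\e\log\e - (1-\e)\log(1-\e)$ from above by $2/e$. I would do this by invoking the elementary estimate $-x\log x \le 1/e$ for $x \in [0,1]$ (attained at $x = 1/e$), applied separately to $x = \e$ and $x = 1-\e$, which gives $h(\e) \le 1/e + 1/e = 2/e$, as required.

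The remaining case $\e \le p$, where the classical Chernoff bound is vacuous, must be dispatched by a separate trivial argument: since $\e \le p$ implies $\e\log(1/p) \le p\log(1/p) \le 1/e < 2/e$ (using that $p\log(1/p)$ attains its maximum $1/e$ at $p = 1/e$), the exponent $2/e + \e\log p$ is non-negative, so the claim reduces to the trivial bound $\pr(\cdots) \le 1 \le \exp(N(2/e + \e\log p))$. There is no substantial obstacle here; the only modest care needed is in identifying the correct simplification path from the KL form and in treating the two regimes $\e > p$ and $\e \le p$ separately.
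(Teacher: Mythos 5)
Your proof is correct and takes essentially the same route as the paper: invoke Hoeffding's KL-form Chernoff bound and then bound the binary entropy term by $2/e$ via the elementary estimate $-x\log x \le 1/e$, while dropping the nonpositive $(1-\e)\log(1-p)$ term. You go slightly further by explicitly dispatching the regime $\e \le p$, where the KL-form bound does not apply, and correctly observing that the claimed inequality is then trivial because the exponent $2/e + \e\log p$ is nonnegative; the paper's proof leaves this case implicit.
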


\providecommand{\kl}{\mathsf{KL}}

\begin{proof}

For $0 < \e < 1$, using the Chernoff-Hoeffding bound for binomial random variables \cite[Theorem~1]{hoeffding1963probability} we have
\eq{
    \pr\pa{\f{1}{N}\sum_{1 \le i \le N}Z_i > \e} \le \exp\qty\Bigg(-N \cdot \kl\qty\Big(\textup{Ber}(\e) || \textup{Ber}(p))),
    \label{eq:mom-concentration-4}
}
where $\textup{Ber}(\e)$ and $\textup{Ber}(p)$ are Bernoulli distributions with parameters $\e$ and $p$ respectively, and $\kl(\pr || \qr)$ is the Kullback-Leibler divergence of $\qr$ w.r.t $\pr$. Simplifying the quantity in the exponent, we get
\eq{
    \kl\qty\big(\textup{Ber}(\e) || \textup{Ber}(p)) &= \e\log\pa{\f{\e}{p}} + (1-\e)\log\pa{\f{1-\e}{1-p}}\n
    &= \underbrace{\e \log(\e) + (1-\e)\log(1-\e)}_{\ge -2/e} - \e\log(p) - (1-\e)\log(1-p)\n
    &\ge -\f{2}{e} - \e \log(p),\nn
}
where the last inequality uses the fact that $x\log(x) \ge -1/e$ for all $0 \le x \le 1$, and ${-(1-\e)\log(1-p) \ge 0}$ for all ${0 \le \e, p \le 1}$. By noting that $\kl\qty\big(\textup{Ber}(\e) || \textup{Ber}(p)) \ge 0$, it follows that 
$$
\kl\qty\big(\textup{Ber}(\e) || \textup{Ber}(p)) \ge \max\qty{-\f{2}{e} - \e \log(p), 0}.
$$
Substituting this in Eq.~(35) yields the result.
\end{proof}

\let\sh\undefined
\let\len\undefined

% Appendix: Auxiliary
%%%%%%%%%%%%%%%%%%%%%%%%%%%%%%%%%%%%%%%%

%%%%%%%%%%%%%%%%%%%%%%%%%%%%%%%%%%%%%%%%
% Appendix: Additional

\section{Additional Experiments}
\label{sec:additional-exp}

The tools for data-adaptive construction of $\dnq$-weighted filtrations, in addition to the code for all experiments, are made publicly available in the \href{https://www.github.com/sidv23/RobustTDA.jl/}{\textsf{RobustTDA.jl}} Julia package\footnote{\url{https://www.github.com/sidv23/RobustTDA.jl/}}. In all experiments, the persistence diagrams are computed using the \textsf{Ripserer.jl} backend \cite{cufar2020ripserer}, and we set the parameter $p=1$ for the weighted-filtrations.

\subsection{Comparison of $\bbv[\dnq]$ and $\bbv[\Xn, \dnq]$}
\label{exp:sublevel}

The objective of this experiment is to illustrate that the $\dnq$-weighted filtration $V[\Xn, \dnq]$ reasonably approximates the sublevel filtration $V[\dnq]$. For the same setup as \ref{exp:adaptive}, {$\Xn$ comprises of $n=550$ points obtained by sampling $500$ points on a circle with additive Gaussian noise ($\s=0.01$) and $m=50$ outliers added from a Matérn cluster process.} For $Q=\widehat Q$ selected using Lepski's method, Figure~\ref{fig:sublevel}\,(a) depicts the \md{} function $\dnq$. Figure~\ref{fig:sublevel}\,(b) illustrates the scatter plot for $\Xn$ with the points colored by the weights $\dnq(\xv_i)$ for each $\xv_i \in \Xn$. The shaded regions show the $\dnq$-weighted offsets $V^t[\Xn, \dnq]$ for $t \in \qty{1.5, 1.75, 2, 2.25}$ colored from white to blue. Figure~\ref{fig:sublevel}\,(c) depicts the sublevel persistence diagram $\dgm\pa{ \bbv[\dnq] }$ computed using cubical homology on a grid of resolution $0.5$. As expected by the result of Proposition~\ref{prop:sublevel-2}, the $\dnq$-weighted persistence diagram $\dgm\pa{\bbv[\Xn, \dnq]}$ in Figure~\ref{fig:sublevel}\,(d) captures the essential topological information in $\dgm\pa{ \bbv[\dnq] }$.

\begin{figure}[H]
    \centering
    \begin{subfigure}[b]{0.23\textwidth}
        \centering
        \includegraphics[height=1.1\textwidth]{./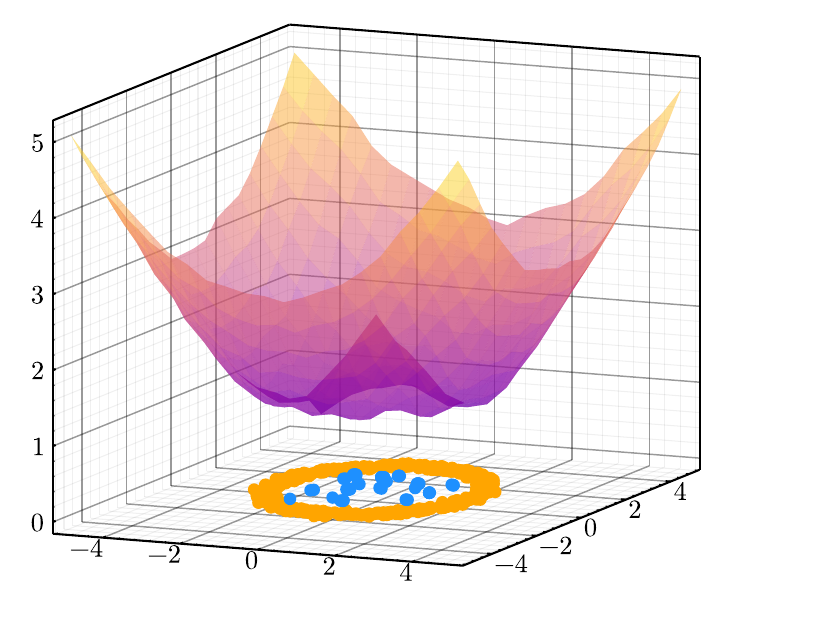}
        \caption{\md{} function $\dnq$}
    \end{subfigure}
    \quad\quad
    \begin{subfigure}[b]{0.23\textwidth}
        \centering
        \includegraphics[height=1.1\textwidth]{./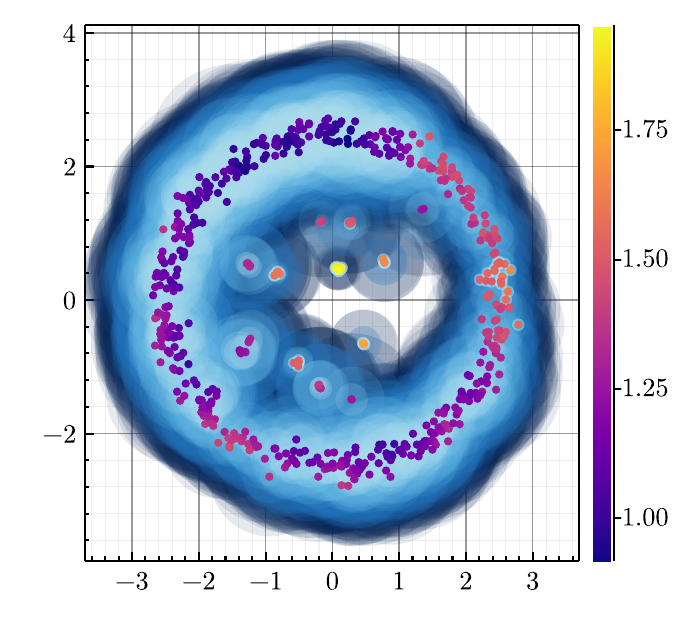}
        \caption{$V^t[\Xn, \dnq]$}
    \end{subfigure}
    % \vspace{5mm}\\
    \begin{subfigure}[b]{0.23\textwidth}
        \centering
        \includegraphics[width=1.1\textwidth]{./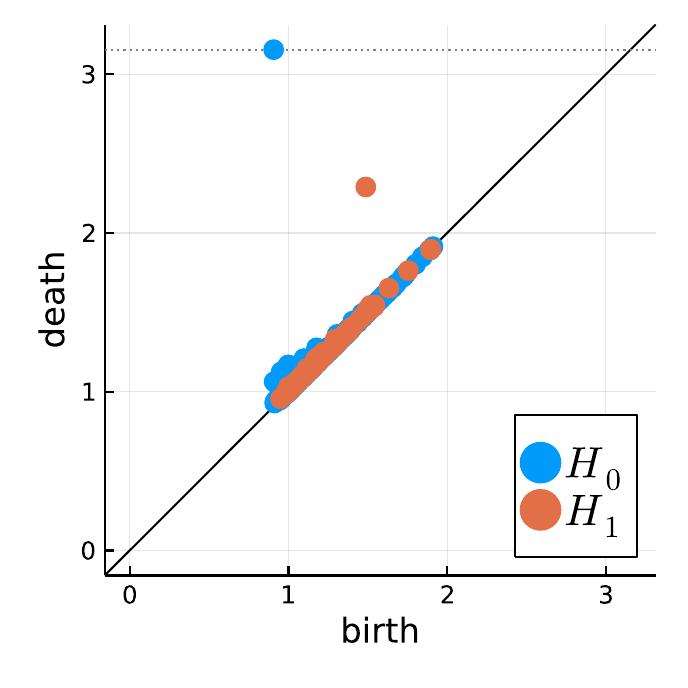}
        \caption{$\dgm\pa{\bbv[\dnq]}$}
    \end{subfigure}
    \begin{subfigure}[b]{0.23\textwidth}
        \centering
        \includegraphics[width=1.1\textwidth]{./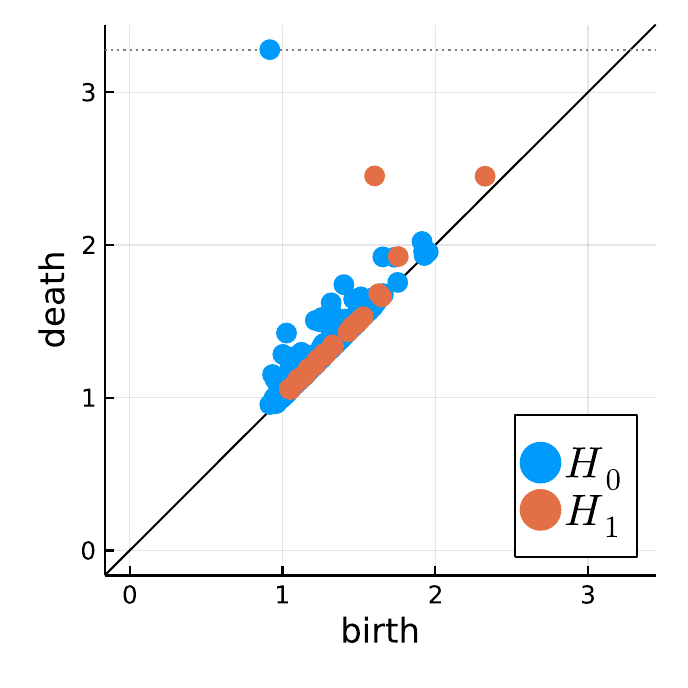}
        \caption{$\dgm\pa{\bbv[\Xn, \dnq]}$}
    \end{subfigure}
    \caption{Comparison of sublevel filtrations with the $\dnq$-weighted filtration.}
    \label{fig:sublevel}
\end{figure}

\begin{figure}[h!]
    % \begin{subfigure}[c]{0.48\textwidth}
    %     \includegraphics[width=\textwidth]{./}
    %     % \includegraphics[width=\textwidth]{./figures/experiments/birth-inf.pdf}
    %     \caption{$\text{influence}\pa{b; \Xn, f_n, m, \xvo}$}
    % \end{subfigure}
    \begin{subfigure}[c]{0.44\textwidth}
        \includegraphics[width=\textwidth]{./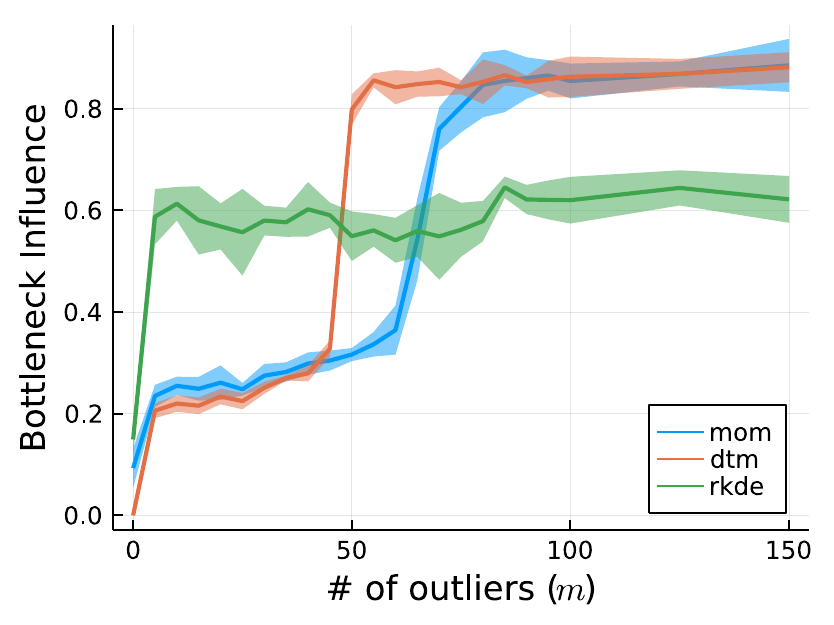}
        \caption{$\text{influence}\pa{\winf; \Xn, f_n, m, \xvo}$}
    \end{subfigure}
    \begin{subfigure}[c]{0.44\textwidth}
        \includegraphics[width=\textwidth]{./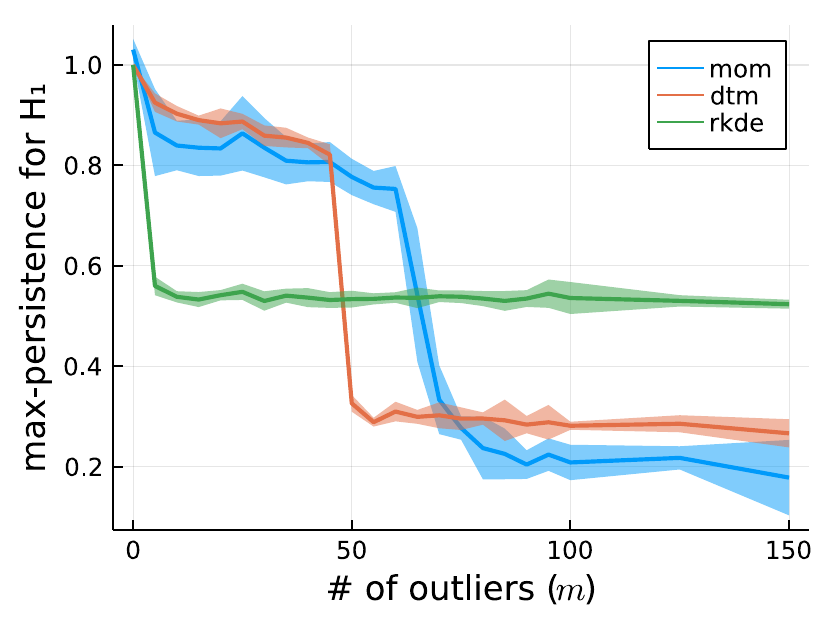}
        \caption{max Persistence for the first order diagram}
    \end{subfigure}
    \vspace*{-7pt}
    \caption{Influence analysis for $\dnq$-weighted filtrations vis-à-vis DTM-based and unweighted filtrations.}
    \label{fig:influence}
\end{figure}

\subsection{Empirical influence analysis}
\label{exp:influence}

\providecommand{\fnms}{\ensuremath{f^{n+m}_{\rho,\s}}}
\providecommand{\Dnms}{\ensuremath{D^{RKDE}_{{n+m}, \rho,\s}}}
\providecommand{\dnms}{\ensuremath{\dsf_{n+m, \rho,\s}}}

In this experiment, we examine the influence of outliers on $\dnq$-weighted filtrations. For $n=500$, points $\Xn$ are sampled uniformly from a circle. We compute the unweighted persistence diagram $D_n = \dgm(\bbv[\Xn])$. In a small neighborhood around the center of the circle, outliers $\Ym$ are sampled uniformly from $[-0.1,0.1]^2$. For the composite sample $\Xn \cup \Ym$ and a fixed value of $Q=100$ \& $k = 50$, we compute the \md{} weighted persistence diagram $D^{MoM}_{n+m,Q} = \dgm(\bbv[\Xn \cup \Ym, \dsf_{n+m,Q}])$, the DTM weighted persistence diagram $D^{DTM}_{n+m,k} = \dgm(\bbv[\Xn \cup \Ym, \delta_{n+m, k}])$, and the RKDE weighted persistence diagram ${\Dnms}$ from the RKDE $\fnms$ using the Hampel loss $\rho$ and a Gaussian kernel $K_\s$. Since the RKDE $\fnms \defeq \sum_{i=1}^{n+m}w_i K_\s(\cdot, \Xv_i)$ does not behave like a distance function, we convert $\fnms$ to a distance-like function $\dnms$ using a similar approach as \cite{phillips2015geometric} to obtain
\eq{
    \dnms(\xv) 
    &\defeq \normh{K_\s(\cdot, \xv) - \fnms}\\ 
    &= \sqrt{\mathop{\sum\sum}\limits_{1 \le i,j \le n+m} w_i w_j K_\s(\Xv_i, \Xv_j) + K_\s(\xv, \xv) - 2 \fnms(\xv)}.\nn
}
The RKDE-weighted persistence diagram $\Dnms = \dgm(\bbv[\Xn \cup \Ym, \dnms])$ is then computed using the $\dnms$-weighted filtration on the composite sample. The bandwidth of the kernel and the parameters for the Hampel loss function are selected using the same approach as in \cite{vishwanath2020robust}. For each diagram, we compute the birth time $b(\qty{\xvo})$ for the first outlier $\xvo \in \Ym$, and the bottleneck influence $\winf(D_{n+m}, D_n)$, as described in Section~\ref{sec:influence}. We generate $10$ such samples for each value of $m$, and report the average in Figure~\ref{fig:influence}. 

{
From Figure~\ref{fig:influence}\,(a), we note that $D^{MoM}_{n+m,Q}$ and $D^{DTM}_{n+m,k}$ show similar behavior, although the outliers consistently appear earlier in the {DTM persistence diagram $D^{DTM}_{n+m,k}$.} Furthermore, for $D^{MoM}_{n+m,Q}$, we observe the sharp transition that occurs between $m=50$ and $m=80$, which is due to the fact that the theoretical guarantees for $\dnq$ from Theorem~\ref{theorem:momdist-influence} are valid only when $2m < Q = 100$. Similarly, from Theorem~\ref{theorem:momdist-consistency}, the outliers are guaranteed to have little influence on $D^{MoM}_{n+m,Q}$ whenever $m \le 50$, as seen in Figure~\ref{fig:influence}\,(a). In addition to the influence of the outliers in the bottleneck metric, we also compute the maximum persistence for the order-$1$ persistence diagram in Figure~\ref{fig:influence}\,(b). While the RKDE remains resilient to uniform outliers, we note that $\Dnms$ is significantly impacted by the outliers placed at a single point in the center of the circle. This is evidenced by the sharp transitions for $\Dnms$ in Figures~\ref{fig:influence}\,(a, b). However, unlike $\dsf_{n+m, Q}$ and $\delta_{n+m, k}$, by construction ${\norminf{\dnms} \le \sup_{\xv} \sqrt{2 K_\s(\xv, \xv)} < \infty}$. Therefore, the impact the outliers have on $\Dnms$ is bounded; and despite being more sensitive to the high-density outliers, the resulting influence on $\Dnms$ in Figures~\ref{fig:influence}\,(a, b, c) is bounded. 
}

% Appendix: Additional
%%%%%%%%%%%%%%%%%%%%%%%%%%%%%%%%%%%%%%%%

\endgroup
%%%% End: Main
%%%%%%%%%%%%%%%%%%%%%%%%%%%%%%%%%%%%%%%%

\end{document}